\newtheorem{theorem}{Theorem}[section]
\newtheorem{proposition}[theorem]{Proposition}
\newtheorem{lemma}[theorem]{Lemma}
\newtheorem{example}[theorem]{Example}
\newcommand\extrafootertext[1]{%
    \bgroup
    \renewcommand\thefootnote{\fnsymbol{footnote}}%
    \renewcommand\thempfootnote{\fnsymbol{mpfootnote}}%
    \footnotetext[0]{#1}%
    \egroup
}
\begin{document}

\title{Projected Spread Models}
\date{\vspace{-3em}}

\author[1]{Jung-Chao Ban}
\author[1]{Jyy-I Hong\footnote{Corresponding author. \textit{Email address}: hongjyyi@nccu.edu.tw}}
\author[1]{Cheng-Yu Tsai}
\author[2]{Yu-Liang Wu}
\affil[1]{Department of Mathematical Sciences, National Chengchi University, Taipei 11605, Taiwan, ROC.}
\affil[2]{Department of Mathematical Sciences, P.O. Box 3000, 90014 University of Oulu, Finland}
\maketitle

\begin{abstract}

We present a disease transmission model that considers both explicit and non-explicit factors. This approach is crucial for accurate prediction and control of infectious disease spread. In this paper, we extend the spread model from our previous works \cite{ban2021mathematical,ban2023randomspread, ban2023mathematical, ban2023spread} to a projected spread model that considers both hidden and explicit types. Additionally, we provide the spread rate for the projected spread model corresponding to the topological and random models. Furthermore, examples and numerical results are provided to illustrate the theory.

\extrafootertext{\textit{Keywords}: projected spread model, topological spread model, random spread model, spread rate.}

\end{abstract}

\baselineskip=1.4 \baselineskip

\section{Introduction}

The main theme of the paper is a proposed model of disease transmission that takes into account of both explicit and non-explicit disease-related factors. This model emerges naturally as an approach to explain certain dynamics observed in a pandemic, especially in the wake of the COVID-19 outbreak that shook the world in late 2019.

For a better conception of the behavior of a pandemic, researchers drawn from a diverse array of fields have harnessed tools including (partial) differential equations or machine learning, as demonstrated in works like \cite{alexander2004vaccination, AK-CSF2020, gourley2014mathematical, ou2006spatial, ShuJinWangWu2024JMB, wang2012basic} or random stochastic models, as seen in the works of \cite{Albani2024JMB}, \cite{BG-C2020} and \cite{FA-C2020}, to paint a portrait of stochastic phenomena that are sensitive to parameters and initial conditions. Among all the others, a prevalent class of models classifies each individual as one of the three types: susceptible, infected, and recovered. These models, known as SIR models, are usually studied along with an associated number referred to as a ``basic reproduction number", which is the expected number of infected cases stemming from a single existing case. With this number, one can then quantitatively compare the disease transmissions before and after containment measures are taken \cite{BA-CSF2021, Bichara2023JMB, EMI+-IDM2020, Grundel2022,KS-C2020,LPG+-m2020,MB-S2020,MJK+-C2020,NIE+-MB2020,Stella2022}.

In this article, we extend the authors' previous works \cite{ban2021mathematical, ban2023mathematical, ban2023spread} by complementing each type in the SIR model with a collection of states indicating the current state, usually implicit, of a given individual. We then compare the spread rate of the observed systems with the underlying system.

The motivation of this work came from a very practical problem. When a pandemic occurs, the contagiousness of each infected patient is often related to factors such as the individual's physical condition, the viral load carried by the patient, or the severity of the illness. Therefore, in order to better predict and control the spread rate of an infectious disease, it is crucial to accurately classify patients or virus carriers by certain testing methods and then apply appropriate preventive and control measures accordingly to each class. For convenience, people in the same category are said to be of the same type. Usually, the more precise the classification, the more helpful it is for controlling the epidemic. However, in reality, due to the need for timeliness or because of limitations in scientific technology, the accuracy of testing methods is often constrained. This often leads to gray areas between two classes, causing several different classes to be tested as if they are the same. In such cases, many types with different characteristics are ``hidden" and appear as a same ``explicit" type. To illustrate this, we take Figure \ref{fig: H-E 1-dim} as an example of how the hidden types of patients are transformed into the corresponding explicit types. In this example, each patient belongs to exactly one of six classes labeled by types $a_1, a_2, a_3, a_4, b_1$ and $b_2$ according the viral load each one carries, ranging from high to low. On the top of the figure, from the left to the right, we can see that the first patient is of type $b_2$, the second patient is of type $a_2$, the third patient is of type $a_2$, and so on. A test is applied to each patient; however, this testing method can only detect positive results (labeled as type $a$) once the viral load exceeds a certain level. Assume that only the viral loads carried by the patients of types $a_1, a_2, a_3$ and, $a_4$ exceed the threshold level. Therefore, after testing, patients of types $a_1, a_2, a_3$ and $a_4$ receive test result $a$ and patients of types $b_1$ and $b_2$ receive test result $b$. So, the bottom of the figure represents the corresponding explicit type of each patient after the test. Although not knowing the exact hidden type of each patient is quite common in real-life situations, at least, by knowing their explicit types in this case, patients carrying higher viral loads can be separated from those carrying lower viral loads. Therefore, obtaining the information about the explicit types is also helpful in controlling epidemics. 

To address such an issue in a more formal way, we introduce a mapping (denoted by $\phi$ or $\Phi$) under which each ``hidden" type is projected to its ``explicit" type, and we construct a so-called ``projected spread model" to describe this phenomenon. This mechanism will allow us to study the relationship between the spread rates before and after testing. Moreover, this model can also be applied to the cases in which there is an incubation period between initial infection and first spread. 

This paper is organized as follows. The settings and properties of topological projected spread models and frequent-used notations are introduced in Section \ref{sec: top}. Random projected spread models are dealt with in Section \ref{random models}. Finally, Section \ref{sec: num} consists of examples and experiments that provide numerical evidences for our main theorems.

\begin{figure}[H]
    \centering
    \includegraphics[width=0.8\linewidth]{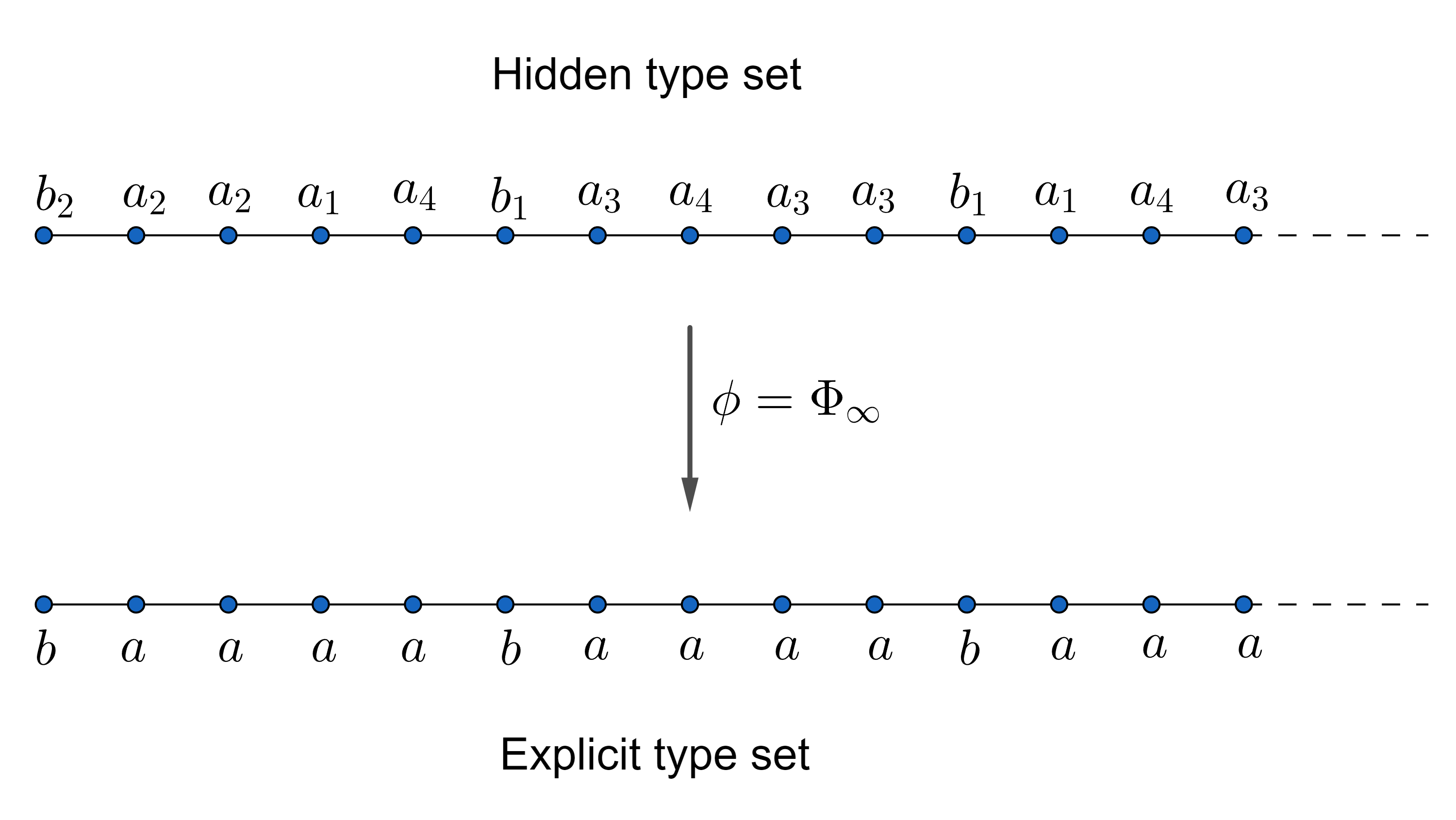}
    \caption{The graph of $0$-block code $\Phi:\{a_1,a_2,a_3,a_4,b_1,b_2\}\to \{a,b\}$ defined by $\Phi(a_i)=a$ and $\Phi(b_i)=b$ in $\mathbb{N}.$ }
    \label{fig: H-E 1-dim}
\end{figure}

\begin{figure}[H]
    \centering
    \includegraphics[width=0.8\linewidth]{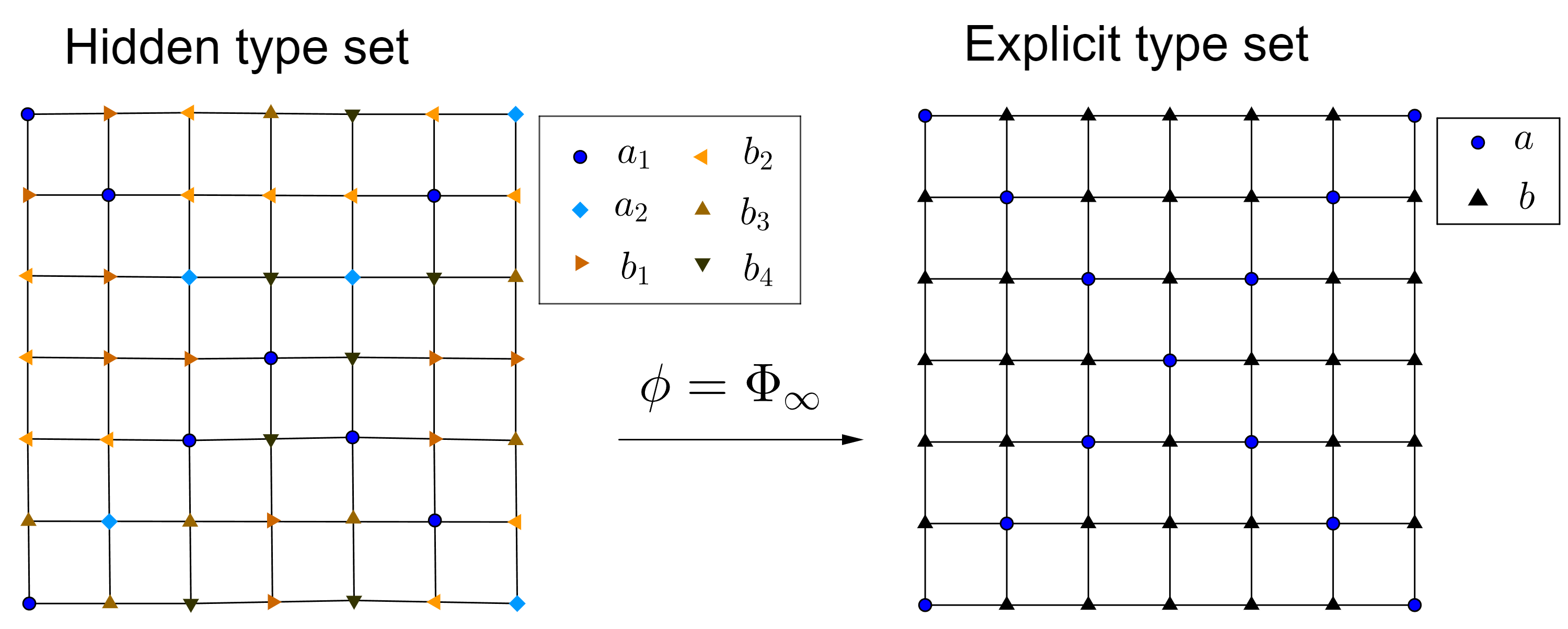}
    \caption{The graph of $0$-block code $\Phi:\{a_1,a_2,a_3,a_4,b_1,b_2\}\to \{a,b\}$ defined by $\Phi(a_i)=a$ and $\Phi(b_i)=b$ in $\mathbb{N}^2.$}
    \label{fig: H-E 2-dim}
\end{figure}

\section{Projected spread models}\label{sec: top}

We adopt the notation used in \cite{ban2023randomspread} throughout this paper. However, for the sake of readability and completeness, we have also summarized the notations in Table \ref{tab:symbol integration}.

Let the explicit type set be denoted by $\mathcal{B}=\{b_{i}\}_{i=1}^{K}$, and $\mathcal{T}_{d}$
be the conventional $d$-tree for $d\in \mathbb{N}$ with the root $\epsilon $%
. Define $\Sigma ^{s}=\{g\in \mathcal{T}_{d}:\left\vert g\right\vert =s\}$
for $s\in \mathbb{N}$ and $\Delta _{n}(h)=\{g\in \mathcal{T}_{d}:g$ is a
descendant of $h$ with $\left\vert g-h\right\vert \leq n\}$, where $%
\left\vert g-h\right\vert $ stands for the length of the unique path from $h$
to $g$ and $\left\vert g\right\vert =\left\vert g-\epsilon \right\vert $. Alternatively, when $h=\epsilon$, we express it as $\Delta _{n}(\epsilon )=\Delta _{n}=\cup _{i=0}^{n-1}\Sigma ^{i}
.$ Denote $\Delta _{m}^{n}=\Delta _{n}\backslash \Delta
_{m}=\{g\in \mathcal{T}_{d}:m<\left\vert g\right\vert \leq n\}$, and for $%
F\subseteq \mathcal{T}_{d}$ we define $F_{m}^{n}=F\cap \Delta _{m}^{n}$. An  $1$\emph{-pattern} is a function $p:F\rightarrow \mathcal{B}$ and $F=F_{p}$ is called the \emph{support }of $p,$ where $F\subseteq \Delta _{1}$ with $\epsilon\in F.$ Let 
$\mathcal{P}_{1}$ be the collection of all $1$-patterns, and for $p\in 
\mathcal{P}_{1}$, we write $p^{(0)}=p(\epsilon )\in \mathcal{B}$ and for $%
g_{1},\ldots ,g_{d_{p}}\in F_{p}$ with $\left\vert g_i\right\vert =1,$  $%
d_{p}\in \mathbb{N}$ and $i=1,2,...,d_p$, we write $p^{(1)}=(p(g_{1}),\ldots ,p(g_{d_{p}}))$.
Therefore, the $1$-pattern $p$ can be expressed as follows: 
\begin{eqnarray*}
p &=&(p^{(0)},p^{(1)}) \\
&=&(p(\epsilon );p(g_{1}),\ldots ,p(g_{d_{p}^{(1)}})).
\end{eqnarray*}%

The set $\mathcal{S}=\{p_i\}_{i=1}^L\subseteq \mathcal{P}_1$ is
called a \emph{spread model} if $\forall~p\in \mathcal{S}$ and $\forall~g\in
F_{p}$ with $\left\vert g\right\vert =1$, there exists a unique $q\in 
\mathcal{S}$ such that $q^{(0)}=p(g)$. Let $%
d=\max_{p\in \mathcal{S}}d_{p}$. The $d$-tree $\mathcal{T}_{d}$ mentioned in the previous paragraph is specified, along with other notations such as $\Delta{m}^{n}$, $F_{m}^{n}$, and so on. 

For an $1$-spread model $\mathcal{S}$ and $p\in \mathcal{S}$, we define $\tau _{p}^{\infty }$ as follows. If we let $\tau _{p}^{0}=p^{(0)}$ and $\tau
_{p}^{1}=p$, for $g\in F_{p}$ with $\left\vert g\right\vert =1$, since $%
\mathcal{S}$ is an $1$-spread model, there exists a unique $q_{g}\in \mathcal{S}$ with $q_{g}^{(0)}=q_{g}(\epsilon )=p(g)$. As a result, we substitute $p(g)$ with the $1$-pattern $q_{g}$ for all $g\in F_{p}$ where $\left\vert g\right\vert =1$, in order to generate a pattern $\tau_{p}^{2}$. Once $\tau_{p}^{n}$ is constructed, we replace the pattern $q_{g}$ with the symbol $\tau_ {p}^{n}(g)$ for all $g\in F_{\tau_{p}^{n}}$ where $\left\vert g\right\vert =n$, to generate $\tau_{p}^{n+1}$. Lastly, we define $\tau _{p}=\tau
_{p}^{\infty }=\lim_{n\rightarrow \infty }\tau _{p}^{n}$ and call it the 
\emph{infinite spread pattern induced from }$p$\emph{\ with respect to }$%
\mathcal{S}$ (\emph{induced spread pattern from} $p$).

Given $\tau _{p}$ for some $p\in \mathcal{S}$ and $p^{(0)}=p(\epsilon )=b\in 
\mathcal{B}$, suppose $F\subset F_{\tau _{p}}$ is a finite set, and we denote by 
$\tau _{p}|_{F}$ the subpattern of $\tau _{p}$ along the subset $F$, that
is, $\tau _{p}|_{F}=\{\tau _{p}(g):g\in F\}$. Given a sequence $%
\{k_{n}\}_{n=1}^{\infty }\subseteq \mathbb{N}$, the following value $s_{p}(b;%
\mathcal{S},\{k_{n}\}_{n=1}^{\infty })$ is of interest and significance for
the spread model $\mathcal{S}$. 
\[
s_{p}(a;\mathcal{S},\{k_{n}\}_{n=1}^{\infty })=\lim_{n\rightarrow \infty }%
\frac{O_{a}(\tau _{p}|_{\Delta _{s_{n}}^{s_{n+1}}(F_{\tau _{p}})})}{%
\left\vert \Delta _{s_{n}}^{s_{n+1}}(F_{\tau _{p}})\right\vert }\text{, }%
a\in \mathcal{B}\text{,}
\]%
where 
\[
s_{n}=\sum_{i=1}^{n}k_{i}\text{,}
\]%
and $O_{a}(\tau _{p}|_{F})$ is the \emph{number of occurrences of the type }$%
a$\emph{\ in the range }$F$.

Let $\mathcal{A}$ be another type set and $\Phi :\mathcal{B}\rightarrow 
\mathcal{A}$ be an assignment from $\mathcal{B}$ into $\mathcal{A}$. Given $%
\tau _{p}$ and its support $F_{\tau _{p}}$, we define 
\[
\phi (\tau _{p})=\left( \Phi (\tau _{p}(g))\right) _{g\in F_{\tau _{p}}}\in 
\mathcal{A}^{F_{\tau _{p}}}\text{.}
\]%
Here we call $\Phi $ a $0$\emph{-block code}, and the pair $(\mathcal{S}%
,\Phi )$ is called the \emph{projected spread model}. Given a projected spread model 
$(\mathcal{S},\Phi )$, for $b\in \mathcal{B}$, $a\in \mathcal{A}$ and $p\in 
\mathcal{S}$ with $p(\epsilon )=b$, the \emph{spread rate with respect to }$%
(S,\Phi )$ is defined as
\begin{equation}
s_{p}(a;\mathcal{S},\Phi ,\{k_{n}\}_{n=1}^{\infty })=\lim_{n\rightarrow
\infty }\frac{O_{a}(\phi (\tau _{p})|_{\Delta _{s_{n}}^{s_{n+1}}(F_{\phi (\tau
_{p})})})}{\left\vert \Delta _{s_{n}}^{s_{n+1}}(F_{\phi (\tau _{p})})\right\vert }%
\text{, }a\in \mathcal{A}\text{.}  \label{1}
\end{equation}%
The aim of this article is to provide a complete formula of the spread rate for a projected spread model.

\begin{table}[H]
    \centering 
    \begin{tabular}{ccl}
        \hline
        $\mathcal{B}$& & hidden type set\\
        $\mathbf{B}^{[k]}$& & hidden type set for $k$-pattern\\
        $\mathcal{A}$& & explicit type set\\
        $\mathcal{T}_d$& &  conventional $d$-tree \\
        $\Sigma^s$& & the node in $\mathcal{T}_d$ whose distance to the root is $s$.\\
        $\Delta_n(h)$& & the node in $\mathcal{T}_d$ whose distance to node $h$ is less than $n$.\\
        $\Delta_n$& & the node in $\mathcal{T}_d$ whose distance to the root is less than $n$.\\
        $\Delta_m^n$& & the node in $\mathcal{T}_d$ whose distance to the root is in $(m,n]$.\\
        $p$& & 1-pattern\\
        $F_p$& & support of $p$\\
        $\mathcal{P}_k$& &the collection of all k-patterns\\
        $\mathcal{S}$& & spread model\\
        $\mathbf{S}^{[k]}$& & $k$-spread model from $\mathcal{S}.$\\
        $\tau_p^\infty$& &infinite spread pattern induced from $p$ with respect to $\mathcal{S}$\\
        $O_{a}(\tau _{p}|_{F})$& &the number of occurrences of the type $a$ in the range $F$\\
        $\Phi$& & 0-block code\\
        $\Phi^{[k]}$& & $k$-block code\\
        $\theta_b$& & the set of all $(m-1)$-pattern $\overline{b}$ with $\overline{b}(\epsilon)=b$ \\
        $(\mathcal{S},\Phi)$& &projected spread model\\
        
        \hline
    \end{tabular}
    \caption{Frequently used notation}
    \label{tab:symbol integration}
\end{table}

\subsection{$1$-spread model and $0$-block code}

In this subsection, we present the spread rate for the projected spread model $(\mathcal{S},\Phi),$ where $\mathcal{S}$ is a 1-spread model and $\Phi$ is a 0-block code corresponding to the previous result in \cite{ban2023spread}, as stated below.

\begin{theorem}[Proposition 2 \cite{ban2023spread}]
Let $\mathcal{S=}\{p_{i}\}_{i=1}^{L}$ be a spread model over $\mathcal{B}
$, and write $\mathcal{B}=\{b_{j}\}_{j=1}^{K}$. Suppose $M$ is the
associated $\xi $-matrix, and $\rho =\rho _{M}$ is the maximal eigenvalue of $M$
with positive left eigenvector $w=(w(b_{j}))_{j=1}^{K}$ and $\sum_{j=1}^K \omega(b_j)=1$. Then for $b\in \mathcal{B%
}$, the vector $(s_{b}(b_{j},\mathcal{S},\{k_{n}\}_{n=1}^{\infty
}))_{j=1}^{K}$ is independent of $b$ and 
\[
(s_{b}(b_{j},\mathcal{S},\{k_{n}\}_{n=1}^{\infty
}))_{j=1}^{K}=w=(w(b_{j}))_{j=1}^{K}\text{.}
\]
\end{theorem}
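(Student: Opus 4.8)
The plan is to read the spread rate as the limiting type-frequency of a multitype substitution on $\mathcal{T}_{d}$ and to extract it from the Perron--Frobenius data of the $\xi$-matrix $M$. First I would make the offspring count explicit. Since $\mathcal{S}$ is a $1$-spread model, each $b_{i}\in\mathcal{B}$ is the root symbol $p^{(0)}$ of a unique pattern $p\in\mathcal{S}$, and $M$ records the child type-counts: $M_{ij}$ is the number of $g\in F_{p}$ with $\left\vert g\right\vert=1$ and $p(g)=b_{j}$, where $p$ is the pattern rooted at $b_{i}$ (this is the content of the $\xi$-matrix, with the orientation chosen so that the left eigenvector carries the frequencies). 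Writing $v_{n}=(v_{n}(b_{j}))_{j=1}^{K}$ for the row vector counting the nodes of each type at generation $n$ in $\tau_{p}$, the recursive construction of $\tau_{p}^{n+1}$ from $\tau_{p}^{n}$ --- each generation-$n$ symbol is replaced by the pattern it roots --- says exactly that $v_{n+1}=v_{n}M$, whence $v_{n}=e_{b}M^{n}$, with $e_{b}$ the indicator row vector of the starting type $b$. This step is bookkeeping, but it is precisely where the $1$-spread property (the existence and uniqueness of each continuation $q_{g}$) is used.

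Next I would express the spread rate $s_{b}(b_{j},\mathcal{S},\{k_{n}\}_{n=1}^{\infty})$ through the $v_{m}$. Over the annulus $\Delta_{s_{n}}^{s_{n+1}}(F_{\tau_{p}})=\{g\in F_{\tau_{p}}:s_{n}<\left\vert g\right\vert\leq s_{n+1}\}$ one has
\[
O_{a}\bigl(\tau_{p}|_{\Delta_{s_{n}}^{s_{n+1}}(F_{\tau_{p}})}\bigr)=\sum_{m=s_{n}+1}^{s_{n+1}}v_{m}(a),\qquad\bigl\vert\Delta_{s_{n}}^{s_{n+1}}(F_{\tau_{p}})\bigr\vert=\sum_{m=s_{n}+1}^{s_{n+1}}\left\vert v_{m}\right\vert,
\]
with $\left\vert v_{m}\right\vert=\sum_{j}v_{m}(b_{j})$, so the spread rate is the ratio of these two window sums. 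Assuming $M$ primitive --- the irreducibility/aperiodicity carried by the hypothesis that $M$ has a maximal eigenvalue $\rho$ with a strictly positive left eigenvector --- Perron--Frobenius gives $\rho^{-m}M^{m}\to r\,w$, the outer product of the right eigenvector $r$ (normalized by $wr=1$) and the normalized left eigenvector $w$. Therefore $v_{m}/\left\vert v_{m}\right\vert=e_{b}M^{m}/\left\vert e_{b}M^{m}\right\vert\to w$ as $m\to\infty$, and the limit is independent of $e_{b}$; this already accounts for the asserted independence from $b$.

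Finally I would pass from generationwise convergence to the window limit. Writing the ratio of window sums as the convex combination $\sum_{m=s_{n}+1}^{s_{n+1}}\lambda_{m}\,(v_{m}(a)/\left\vert v_{m}\right\vert)$ with weights $\lambda_{m}=\left\vert v_{m}\right\vert/\sum_{m'=s_{n}+1}^{s_{n+1}}\left\vert v_{m'}\right\vert$, every term with $m>s_{n}$ lies within $\varepsilon$ of $w(a)$ once $n$ is large (because $s_{n}\to\infty$), so the combination is trapped within $\varepsilon$ of $w(a)$ as well; letting $n\to\infty$ yields the limit $w(a)$ for each $a$ and each admissible $\{k_{n}\}_{n=1}^{\infty}$. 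I expect the main obstacle to be exactly this last step married to primitivity: if $M$ were merely irreducible but periodic, the normalized generation vectors $v_{m}/\left\vert v_{m}\right\vert$ would cycle instead of converge, and a thin window such as $k_{n}\equiv 1$ could select a limit other than $w$. The crux is therefore to verify that the stated hypotheses on the $\xi$-matrix (a maximal eigenvalue with a strictly positive left eigenvector) genuinely force convergence of $\rho^{-m}M^{m}$ to a rank-one limit, after which the weighted-average argument is routine.
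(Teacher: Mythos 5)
Your proposal is correct in substance, and it is worth noting at the outset that this paper never proves the statement itself: it is imported verbatim from \cite{ban2023spread}. The closest thing to an in-paper proof is that of Theorem \ref{Thm: 2}, which runs on exactly your bookkeeping: $\mathbf{1}_{b}^{t}M^{n}\mathbf{1}_{c}$ counts the type-$c$ nodes of $\tau_{p}$ at generation $n$ (your $v_{n}=e_{b}M^{n}$), the per-generation frequencies $\mathbf{1}_{b}^{t}M^{n}\mathbf{1}_{c}/\mathbf{1}_{b}^{t}M^{n}\mathbf{1}$ converge to $w(c)$, and window sums over $\Delta_{s_{n}}^{s_{n+1}}$ are then assembled. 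The two genuine differences are these. First, the paper passes from single generations to windows via Lemma \ref{Lma: 2}, a Stolz-type lemma on ratios of sums, and accordingly restricts to $k_{n}\equiv k$ or $k_{n}\rightarrow\infty$; you instead use a direct convex-combination estimate with weights $\lambda_{m}$, which is equally valid and in fact slightly stronger, since once the generational limit holds it works for an arbitrary sequence $\{k_{n}\}_{n=1}^{\infty}$ (every index in the window exceeds $s_{n}\rightarrow\infty$, so all terms are uniformly within $\varepsilon$ of $w(a)$). Second, you actually derive the generational limit from the rank-one convergence $\rho^{-m}M^{m}\rightarrow rw$, which is precisely the content the paper outsources to the cited proposition.

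Your primitivity caveat is well placed, and it is the one point where the statement as restated here is deficient rather than your argument. Positivity of a left eigenvector for the maximal eigenvalue does not force aperiodicity: the $1$-spread model $\mathcal{S}=\{(b_{1};b_{2}),(b_{2};b_{1})\}$ has $\xi$-matrix $\left(\begin{smallmatrix}0&1\\1&0\end{smallmatrix}\right)$ with $\rho=1$ and $w=(\tfrac{1}{2},\tfrac{1}{2})$ strictly positive, yet with $k_{n}\equiv 1$ the generation-$n$ frequency of $b_{1}$ in $\tau_{p}$ alternates between $0$ and $1$, so the limit defining the spread rate does not exist. Hence the conclusion genuinely requires primitivity (or an equivalent aperiodicity hypothesis) on $M$, which must be read as a standing assumption inherited from \cite{ban2023spread}; the paper's own examples all have primitive $\xi$-matrices, consistent with this reading. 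Once that hypothesis is made explicit, your outline is complete and nothing in it needs repair.
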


The lemma used to prove the main result, which appears in \cite{ban2023mathematical}, is stated below for convenience of reference.
\begin{lemma}[Lemma 2 \cite{ban2023mathematical}]
\label{Lma: 2}Let $\{a_{n}\}_{n=1}^{\infty },$ $\{b_{n}\}_{n=1}^{\infty }$
be real sequences and $\{c_{n}\}_{n=1}^{\infty },$ $\{d_{n}\}_{n=1}^{\infty
} $ be positive real sequences. Suppose 
\[
\lim_{n\rightarrow \infty }\frac{a_{n}}{b_{n}}=\lim_{n\rightarrow \infty }%
\frac{c_{n}}{d_{n}}=Q\text{.} 
\]%
Then 
\[
\lim_{n\rightarrow \infty }\frac{a_{n}+c_{n}}{b_{n}+d_{n}}=Q\text{.} 
\]%
Furthermore, suppose that $\lim_{n\rightarrow \infty
}\sum_{j=1}^{n}b_{j}=\infty $, then 
\[
\lim_{n\rightarrow \infty }\frac{\sum_{j=1}^{n}a_{j}}{\sum_{j=1}^{n}b_{j}}=Q%
\text{.} 
\]
\end{lemma}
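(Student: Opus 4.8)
The plan is to treat the two assertions separately, proving the ``mediant'' statement first and then the averaged (Ces\`aro) statement. Throughout I will take $b_{n}>0$ as well as $d_{n}>0$; this is the setting the conclusion actually needs (the two denominators must share a sign, for otherwise $b_{n}+d_{n}$ can approach $0$ and the ratio diverge), and it matches the application, where $a_{n},b_{n},c_{n},d_{n}$ count nodes of $\mathcal{T}_{d}$ and so are nonnegative with positive denominators.

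For the first claim I would invoke the elementary mediant inequality: for reals $a,c$ and positive $b,d$, the fraction $\frac{a+c}{b+d}$ always lies between $\frac{a}{b}$ and $\frac{c}{d}$. This is immediate from the two identities
\[
\frac{a+c}{b+d}-\frac{a}{b}=\frac{bc-ad}{b(b+d)},\qquad
\frac{c}{d}-\frac{a+c}{b+d}=\frac{bc-ad}{d(b+d)},
\]
whose right-hand sides share the sign of $bc-ad$, so the mediant is squeezed between the two fractions. Applying this with $a=a_{n}$, $b=b_{n}$, $c=c_{n}$, $d=d_{n}$ shows $\frac{a_{n}+c_{n}}{b_{n}+d_{n}}$ is trapped between $\frac{a_{n}}{b_{n}}$ and $\frac{c_{n}}{d_{n}}$ for every $n$. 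Since both of the latter tend to $Q$, the squeeze theorem yields $\frac{a_{n}+c_{n}}{b_{n}+d_{n}}\to Q$.

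For the second claim, set $A_{n}=\sum_{j=1}^{n}a_{j}$ and $B_{n}=\sum_{j=1}^{n}b_{j}$; by hypothesis $B_{n}$ is strictly increasing (as $b_{j}>0$) and $B_{n}\to\infty$. I would then run the standard Stolz--Ces\`aro estimate. Given $\varepsilon>0$, choose $N$ so that $|a_{j}-Q b_{j}|<\varepsilon b_{j}$ for all $j>N$ (possible since $a_{j}/b_{j}\to Q$ and $b_{j}>0$). Splitting at $N$ and using the triangle inequality,
\[
\bigl|A_{n}-Q B_{n}\bigr|\le\Bigl|\sum_{j=1}^{N}(a_{j}-Q b_{j})\Bigr|+\sum_{j=N+1}^{n}|a_{j}-Q b_{j}|\le C_{N}+\varepsilon B_{n},
\]
where $C_{N}=\bigl|\sum_{j=1}^{N}(a_{j}-Q b_{j})\bigr|$ is a constant independent of $n$. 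Dividing by $B_{n}>0$ gives $\bigl|A_{n}/B_{n}-Q\bigr|\le C_{N}/B_{n}+\varepsilon$, and letting $n\to\infty$ with $N$ fixed gives $\limsup_{n}|A_{n}/B_{n}-Q|\le\varepsilon$; as $\varepsilon$ is arbitrary, $A_{n}/B_{n}\to Q$.

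The only genuine obstacle is this second part, and it is precisely where the hypothesis $\sum_{j}b_{j}\to\infty$ enters: the fixed initial block $\sum_{j\le N}(a_{j}-Q b_{j})$ cannot be controlled termwise, but dividing by the diverging $B_{n}$ makes its contribution $C_{N}/B_{n}$ vanish, leaving only the uniformly small tail. (Alternatively, the second part can be coaxed out of the first by a finite induction extending the mediant bound to the block $\sum_{j=N+1}^{n}$, but the direct estimate above is cleaner and makes the role of $B_{n}\to\infty$ transparent.) I would also flag at the outset that positivity of the denominators is essential, since without it both the mediant inequality and the squeeze break down.
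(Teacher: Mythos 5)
Your proof is correct, but there is nothing in this paper to compare it against: the lemma is imported verbatim from Lemma 2 of \cite{ban2023mathematical} and stated ``for convenience of reference'' without proof, so your argument supplies what the paper omits. Both halves are sound. The two mediant identities are exactly right --- both differences carry the sign of $bc-ad$ once $b,d>0$ --- so $\frac{a_n+c_n}{b_n+d_n}$ is squeezed between $\frac{a_n}{b_n}$ and $\frac{c_n}{d_n}$ and converges to $Q$; and the split-at-$N$ estimate $|A_n-QB_n|\le C_N+\varepsilon B_n$ together with $B_n\to\infty$ is the standard Stolz--Ces\`aro argument, correctly executed, with the hypothesis $\sum_j b_j\to\infty$ entering precisely where you say it does, namely to annihilate the contribution $C_N/B_n$ of the fixed initial block. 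Your caveat about positivity is also well taken and worth keeping: as literally stated the lemma permits $b_n<0$, and then both conclusions can fail --- take $b_n=-1+\frac{1}{n}$, $a_n=Qb_n$, $d_n=1$, $c_n=Q+\frac{1}{n}$, so that $\frac{a_n}{b_n}=Q$ and $\frac{c_n}{d_n}\to Q$ while $\frac{a_n+c_n}{b_n+d_n}=Q+1$ for every $n$ --- so the assumption $b_n>0$ that you impose, automatic in all of this paper's applications where the four sequences count nodes or occurrences of types, is genuinely needed rather than a mere convenience.
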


Let $(\mathcal{S},\Phi )$ be a projected spread model and $\Phi :\mathcal{B}%
\rightarrow \mathcal{A}$ be a $0$-block code. Let $b\in \mathcal{B}$ be a
certain type, let $\mathbf{1}_{b}\in \{0,1\}^{K}$ denote the vector with $1$ as its 
$b$th coordinate and $0$'s elsewhere, i.e., $\mathbf{1}%
_{b}=(0,\ldots ,0\overbrace{,1,}^{b\text{th}}0,\ldots ,0)^t$ and let $\mathbf{1\in 
}\{0,1\}^{K}$ denote the vector whose entries are all $1^{\prime }$s. Let $%
C\subseteq \mathcal{B}$ be a finite set of $\mathcal{B}$, we define $\mathbf{%
1}_{C}=\sum_{c\in C}\mathbf{1}_{c}$. For $a\in \mathcal{A}$, we define $\Phi
^{-1}(a)=\{c:\Phi (c)=a\}$ to be the preimage of $a$ with respect to $\Phi $.
For a projected spread model $\left( \mathcal{S},\Phi \right) $, we establish
the formula for the spread rate $s_{p}(a,\mathcal{S},\Phi
,\{k_{n}\}_{n=1}^{\infty })$ (cf. (\ref{1})) below.

\begin{theorem}
\label{Thm: 2}Let $\left( \mathcal{S},\Phi \right) $ be a projected spread model
with $\mathcal{S}$ being an $1$-spread model and $\Phi $ be an $0$-block
code, and $M$ be the $\xi $-matrix of the $1$-spread model $\mathcal{S}$.
Suppose $\{k_{n}\}_{n=1}^{\infty }\subseteq \mathbb{N}$ is a sequence such
that $k_{n}=k$ $\forall n\in \mathbb{N}$ or $k_{n}\rightarrow \infty $ as $%
n\rightarrow \infty $. For $b\in \mathcal{B}$, $a\in \mathcal{A}$, and $p\in 
\mathcal{S}$ with $p(\epsilon )=b$, then we have 
\begin{eqnarray*}
s_{p}(a,\mathcal{S},\Phi ,\{k_{n}\}_{n=1}^{\infty }) &=&\lim_{n\rightarrow
\infty }\frac{\mathbf{1}_{b}^tM^{n}\mathbf{1}_{\Phi ^{-1}(a)}}{\mathbf{1}%
_{b}^tM^{n}\mathbf{1}} \\
&=&\sum_{c\in \mathcal{B}:\text{ }\Phi (c)=a}w(c)\text{,}
\end{eqnarray*}%
where $w(c)$ is the $c$th component of the normalized left positive eigenvector $w$ of the $\xi $-matrix $M$ corresponding to the maximal eigenvalue $\rho _{M}$ of 
$M$.
\end{theorem}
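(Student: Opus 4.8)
The plan is to reduce the statement to the Perron--Frobenius theory of the nonnegative $\xi$-matrix $M$ and then transfer the resulting level-by-level asymptotics to the block averages in (\ref{1}) by means of Lemma~\ref{Lma: 2}. The first and central step is a counting identity. Fixing $p\in\mathcal{S}$ with $p(\epsilon)=b$, I would show by induction on $\ell$ that the number of nodes of $F_{\tau _p}$ at distance exactly $\ell$ from the root carrying the hidden type $c$ equals $\mathbf{1}_b^tM^{\ell}\mathbf{1}_c$. Indeed, constructing $\tau_p^{\ell+1}$ from $\tau_p^{\ell}$ replaces each node of type $c'$ by the children of the unique pattern of $\mathcal{S}$ rooted at $c'$, whose number of type-$c$ children is the $(c',c)$-entry of $M$; summing over $c'$ is exactly one multiplication by $M$. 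This is the step that converts the tree picture into linear algebra, and I regard it as the key reduction.

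Since $\Phi$ is a $0$-block code, $\phi$ relabels each node individually, so $F_{\phi(\tau_p)}=F_{\tau_p}$ and the explicit-type count splits additively, $O_a(\phi(\tau_p)|_G)=\sum_{c:\,\Phi(c)=a}O_c(\tau_p|_G)$. Taking $G=\Delta_{s_n}^{s_{n+1}}(F_{\tau_p})$ and inserting the counting identity turns the numerator and denominator of (\ref{1}) into $\sum_{\ell=s_n+1}^{s_{n+1}}\mathbf{1}_b^tM^{\ell}\mathbf{1}_{\Phi^{-1}(a)}$ and $\sum_{\ell=s_n+1}^{s_{n+1}}\mathbf{1}_b^tM^{\ell}\mathbf{1}$; dividing termwise, the additive splitting already exhibits the projected rate as the sum over $\Phi^{-1}(a)$ of the single-type rates identified as $w(c)$ in Proposition~2 of \cite{ban2023spread}, which is a conceptual shortcut for the outermost equality. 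To obtain the middle expression I would record the single-power limit: writing $v,w$ for the right and normalized left positive eigenvectors of $M$ for $\rho_M$, the convergence $\rho_M^{-\ell}M^{\ell}\to vw^t/(w^tv)$ gives
\[
\frac{\mathbf{1}_b^tM^{\ell}\mathbf{1}_{\Phi^{-1}(a)}}{\mathbf{1}_b^tM^{\ell}\mathbf{1}}\longrightarrow\frac{w^t\mathbf{1}_{\Phi^{-1}(a)}}{w^t\mathbf{1}}=\sum_{c:\,\Phi(c)=a}w(c)=:Q,
\]
because the positive scalar $\mathbf{1}_b^tv$ cancels and $\sum_c w(c)=1$. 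Setting $\ell=n$ identifies the middle expression with $Q$.

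It then remains to show that the block average tends to the same $Q$, which is where Lemma~\ref{Lma: 2} and the hypothesis on $\{k_n\}$ enter. Writing $a_\ell=\mathbf{1}_b^tM^{\ell}\mathbf{1}_{\Phi^{-1}(a)}$ and $b_\ell=\mathbf{1}_b^tM^{\ell}\mathbf{1}>0$, I have $a_\ell/b_\ell\to Q$. If $k_n\equiv k$, each block is a sum of the fixed number $k$ of terms, all of whose ratios converge to $Q$ as $n\to\infty$, so iterating the first part of Lemma~\ref{Lma: 2} yields $Q$ for the block ratio. If instead $k_n\to\infty$, the window $[s_n+1,s_{n+1}]$ starts at infinity and lengthens, and a squeeze in the spirit of the second part of Lemma~\ref{Lma: 2} — bounding $(Q-\varepsilon)b_\ell\le a_\ell\le(Q+\varepsilon)b_\ell$ for $\ell$ large and then summing over the window before dividing — gives $Q$ as well. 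Hence all three quantities coincide.

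I expect two points to demand the most care. Technically, the $k_n\to\infty$ case is the delicate one, since the partial-sum form of Lemma~\ref{Lma: 2} is phrased for prefixes $\sum_{\ell=1}^{N}$ rather than for moving windows; one must either run the squeeze directly or dominate $\sum_{\ell\le s_n}b_\ell$ by $\sum_{\ell\le s_{n+1}}b_\ell$ using the geometric growth at rate $\rho_M$. Conceptually, the single-power limit tacitly requires $M$ to be primitive, because for a merely irreducible $M$ the normalized powers $\rho_M^{-\ell}M^{\ell}$ oscillate through the period and the level-wise ratio need not converge; I would either invoke primitivity of the $\xi$-matrix among the standing hypotheses or push all periodicity into the block averaging, which is precisely what the two admissible shapes of $\{k_n\}$ are designed to absorb.
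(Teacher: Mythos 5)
Your proposal follows essentially the same route as the paper's proof: the level-count identity $\mathbf{1}_b^t M^{\ell}\mathbf{1}_c = O_c(\tau_p|_{F_{\tau_p}\cap \Sigma^{\ell}})$, the additive splitting of $O_a$ over $\Phi^{-1}(a)$ afforded by the $0$-block code, the exchange of the limit with the finite sum, and Lemma \ref{Lma: 2} to pass from level ratios to block averages in the $k_n=k$ case — the only cosmetic difference being that you re-derive the single-type limit $\mathbf{1}_b^t M^{n}\mathbf{1}_c/\mathbf{1}_b^t M^{n}\mathbf{1}\to w(c)$ from Perron--Frobenius where the paper cites its earlier Proposition 2 of \cite{ban2023spread}. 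Your two refinements are sound and, if anything, sharper than the paper's treatment: you carry out the $k_n\to\infty$ squeeze explicitly where the paper omits it as ``almost identical,'' and you correctly flag that the level-wise convergence tacitly requires primitivity (aperiodicity) of $M$, since for a merely irreducible periodic $\xi$-matrix the normalized powers $\rho_M^{-\ell}M^{\ell}$ oscillate and the level ratios need not converge — a hypothesis the paper leaves implicit in its appeal to the prior result.
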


\begin{proof}
Let $M$ be the $\xi $-matrix associated with the $1$-spread model $\mathcal{S}$%
. Assume that $k_{n}=1$ $\forall n\in \mathbb{N}$, and thus $\Delta
_{s_{n}}^{s_{n+1}}=\Delta _{n}^{n+1}$. For $b,c\in \mathcal{B}$,  $\tau
_{p}$ is the infinite spread pattern induced from $p$ with respect to $%
\mathcal{S}$. It is known that $\mathbf{1}_{b}^{t}M^{n}\mathbf{1}_{c}$ is
the number of $c^{\prime }$s in the support $F_{\tau _{p}}\cap \Sigma^{n}$, which indicates that 
\[
\mathbf{1}_{b}^{t}M^{n}\mathbf{1}_{c}=O_{c}(\tau _{p}|_{\Delta
_{s_{n}}^{s_{n+1}}(F_{\tau _{p}})})=O_{c}(\tau _{p}|_{\Delta _{n}^{n+1}(F_{\tau
_{p}})})=O_{c}(\tau _{p}|_{F_{\tau _{p}}\cap \Sigma^{n}})\text{.}
\]%
Since $\Phi (c)=a$, $\forall c\in \Phi ^{-1}(a)$, it indicates 
\[
\phi (\tau _{p})|_{\Delta _{n}^{n+1}(F_{\tau _{p}})}=\bigcup\limits_{c:\text{ }%
c\in \Phi ^{-1}(a)}\left\{ \Phi (\tau _{p}(g)):\tau _{p}(g)=c\text{ and }%
g\in \Delta _{n}^{n+1}(F_{\tau _{p}})\right\} 
\]%
Therefore, 
\begin{eqnarray}
O_{a}(\phi (\tau _{p})|_{\Delta _{s_{n}}^{s_{n+1}}(F_{\phi (\tau _{p})})})
&=&\sum_{c\in \mathcal{B}:\text{ }c\in \Phi ^{-1}(a)}\left\{ \Phi (\tau
_{p}(g)):\tau _{p}(g)=c\text{, }g\in \Delta _{n}^{n+1}(F_{\tau _{p}})\right\}  
\nonumber \\
&=&\sum_{c\in \mathcal{B}:\text{ }\Phi (c)=a}O_{c}(\tau _{p}|_{\Delta
_{n}^{n+1}(F_{\tau _{p}})})  \nonumber \\
&=&\mathbf{1}_{b}^{t}M^{n}\left( \sum_{c\in \mathcal{B}:\text{ }\Phi (c)=a}%
\mathbf{1}_{c}\right)   \nonumber \\
&=&\sum_{c\in \mathcal{B}:\text{ }\Phi (c)=a}\mathbf{1}_{b}^{t}M^{n}\mathbf{1%
}_{c}  \nonumber \\
&=&\mathbf{1}_{b}^{t}M^{n}\mathbf{1}_{\Phi ^{-1}(a)}.  \label{3}
\end{eqnarray}%
Combining (\ref{3}) with the fact that $\left\vert \Delta
_{s_{n}}^{s_{n+1}}(F_{\tau _{p}})\right\vert =\left\vert \Delta
_{n}^{n+1}(F_{\tau _{p}})\right\vert =\mathbf{1}_{b}^tM^{n}\mathbf{1}$ yields%
\begin{eqnarray*}
s_{p}(a,\mathcal{S},\Phi ,\{k_{n}\}_{n=1}^{\infty }) &=&\lim_{n\rightarrow
\infty }\frac{O_{a}(\phi (\tau _{p})|_{\Delta _{s_{n}}^{s_{n+1}}(F_{\tau
_{p}})})}{\left\vert \Delta _{s_{n}}^{s_{n+1}}(\tau _{p})\right\vert }
\\
&=&\lim_{n\rightarrow \infty }\frac{\sum_{c\in \mathcal{B}:\text{ }\Phi
(c)=a}\mathbf{1}_{b}^{t}M^{n}\mathbf{1}_{c}}{\mathbf{1}_{b}^tM^{n}\mathbf{1}}
\\
&=&\sum_{c\in \mathcal{B}:\text{ }\Phi (c)=a}\lim_{n\rightarrow \infty }%
\frac{\mathbf{1}_{b}^{t}M^{n}\mathbf{1}_{c}}{\mathbf{1}_{b}^tM^{n}\mathbf{1}}
\\
&=&\sum_{c\in \mathcal{B}:\text{ }\Phi (c)=a}w(c)
\end{eqnarray*}%
and it proves the result in the case where $k_{n}=1$ $\forall n\in \mathbb{N}$. For $k_{n}=k$
$\forall n\in \mathbb{N}$, we have $s_{n}=kn$ and thus 
\[
\left\vert \Delta _{s_{n}}^{s_{n+1}}(F_{\phi \left( \tau _{p}\right)}
)\right\vert =\left\vert \Delta _{s_{n}}^{s_{n+1}}(F_{\tau _{p}})\right\vert
=\sum_{i=1}^{k}\left\vert F_{\tau _{p}}\cap \Sigma^{kn+i}\right\vert 
\text{.}
\]%
The above argument demonstrates that for $1\leq j\leq k$, we have 
\begin{equation}
\lim_{n\rightarrow \infty }\frac{O_{a}(\phi (\tau _{p})|_{F_{\phi (\tau
_{p})}\cap \Sigma^{kn+j}})}{\left\vert F_{\phi (\tau _{p})}\cap \Sigma^{kn+j}\right\vert }=\sum_{c\in \mathcal{B}:\text{ }\Phi (c)=a}w(c)\text{%
.}  \label{2}
\end{equation}%
Hence, combining Lemma \ref{Lma: 2} with (\ref{2}) we have 
\begin{eqnarray*}
s_{p}(a,\mathcal{S},\Phi ,\{k_{n}\}_{n=1}^{\infty }) &=&\lim_{n\rightarrow
\infty }\frac{\sum_{i=1}^{k}O_{a}(\phi (\tau _{p})|_{F_{\phi (\tau
_{p})}\cap \Sigma^{kn+i}})}{\sum_{i=1}^{k}\left\vert F_{\phi (\tau
_{p})}\cap \Sigma^{kn+i}\right\vert } \\
&=&\sum_{c\in \mathcal{B}:\text{ }\Phi (c)=a}w(c)\text{.}
\end{eqnarray*}%
The proof of the case where $\{k_{n}\}_{n=1}^{\infty }$ with $%
k_{n}\rightarrow \infty $ as $n\rightarrow \infty $ is almost identical to the
above. Thus, we omit it and the proof is completed.
\end{proof}

\subsection{$1$-spread model and $k$-block code for $k\geq 1$}\label{sec: t_1skb}
Suppose $\mathcal{S}=\{p_{j}\}_{j=1}^{L}$ is an $1$-spread model over $%
\mathcal{B}.$  For all $p\in 
\mathcal{S}$ and $n\in \mathbb{N}$, since $\mathcal{S}$ is an $1$-spread
model, there exists exactly one $\tau _{p}^{n}$ in which $\tau _{p}^{1}=p$.
Fix $k\in \mathbb{N}$ and denote by $\mathcal{S}^{[k+1]}=\left\{\tau
_{p}^{k+1}\right\}_{p\in \mathcal{S}}$ a $(k+1)$-spread model from $\mathcal{S}.$ Let $\mathcal{P}_k=\left\{p:F\subseteq \Delta_{k}\to\mathcal{B}\right\}$ be the collection of all $k$-patterns. Denote by
\[
\mathbf{B}^{[k]}=\left\{\eta \in \mathcal{P}_{k}:\eta =\tau _{p}^{k}\text{ for
some }p\in \mathcal{S}\right\}
\]
and let $\Phi ^{\lbrack k]}:\mathcal{P}^{[k]}\rightarrow \mathcal{A}$
be a $k$-block code from $\mathcal{P}^{[k]}$ into $\mathcal{A}$. Let $b\in \mathcal{B}$, $a\in \mathcal{A}$
and $p\in \mathcal{S}$ with $p(\epsilon )=b$, and we define the \emph{spread
rate }with respect to $\{k_{n}\}_{n=1}^{\infty }$ as follows:  
\[
s_{p}(a,\mathcal{S},\Phi ^{\lbrack k]},\{k_{n}\}_{n=1}^{\infty
})=\lim_{n\rightarrow \infty }\frac{O_{a}(\phi (\tau _{p})|_{\Delta
_{s_{n}}^{s_{n+1}}(\tau _{p})})}{\left\vert \Delta _{s_{n}}^{s_{n+1}}(\tau
_{p})\right\vert }\text{.}
\]%

We note that if $p\in \mathcal{S}$, since $\mathcal{S}$ is an $1$-spread
model, there is only one way to extend $p$ to $\tau _{p}^{n}$ for $n\in 
\mathbb{N}$. Therefore, both sets $\{p\}_{p\in \mathcal{S}}$ and $\left\{\tau
_{p}^{k+1}\right\}_{p\in \mathcal{S}}$ have common cardinalities. Define 
\[
\mathbf{S}^{[k+1]}=\mathbf{S}^{[k+1]}(\mathcal{S}):=\{\mathbf{p}%
_{j}\}_{j=1}^{L}=\{\tau _{p}^{k+1}\}_{p\in \mathcal{S}}\text{,}
\]%
which is an ``$1$-spread model" over $\mathbf{B}^{[k]}$. We note that for $%
\mathbf{p}\in \mathbf{S}^{[k+1]}$, it can be viewed as an $1$-pattern with
respect to the type set $\mathbf{B}^{[k]}$, and it can also be viewed as
an $(k+1)$-pattern with respect to the type set $\mathcal{B}$. In this
circumstance, we use $\mathbf{\bar{p}}$ to denote the associated $(k+1)$%
-pattern. Suppose $M$ (resp. $\mathbf{M}^{[k+1]}$) is the $\xi $-matrix of $%
\mathcal{S}$ (resp. $\mathbf{S}^{[k+1]}$), and we have the following lemma.

\begin{lemma}\label{Lemma:xi-matrices }
\label{Lma: 1}$M=\mathbf{M}^{[k+1]}$
\end{lemma}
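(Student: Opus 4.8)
The plan is to exhibit an explicit bijection between the index sets of the two matrices and then check that corresponding entries count the same thing. Since $\mathcal{S}$ is a $1$-spread model, its defining property guarantees that for every type occurring in $\mathcal{S}$ there is a \emph{unique} pattern $p_b\in\mathcal{S}$ with $p_b(\epsilon)=b$; this yields a bijection $b\mapsto p_b$ between $\mathcal{B}$ and $\mathcal{S}$. Moreover, because $\mathcal{S}$ is a $1$-spread model, the extension $p\mapsto\tau_p^{n}$ is uniquely determined, and restricting $\tau_p^{k}$ to $\Delta_1$ recovers $\tau_p^1=p$; hence $p\mapsto\tau_p^{k}$ is injective, and it is surjective onto $\mathbf{B}^{[k]}$ by the definition of $\mathbf{B}^{[k]}$. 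Composing these, I obtain a bijection $\iota:\mathcal{B}\to\mathbf{B}^{[k]}$, $\iota(b)=\tau_{p_b}^{k}$, and the assertion $M=\mathbf{M}^{[k+1]}$ is to be read as equality of entries under this identification.

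Next I would recall the combinatorial meaning of a $\xi$-matrix entry, exactly as used in the proof of Theorem \ref{Thm: 2}: $M_{bc}$ equals the number of first-generation offspring of type $c$ produced by a node of type $b$, i.e. $M_{bc}=\lvert\{g\in F_{p_b}:\lvert g\rvert=1,\ p_b(g)=c\}\rvert$. The same reading applies to $\mathbf{S}^{[k+1]}$ regarded as a $1$-spread model over $\mathbf{B}^{[k]}$: writing $\mathbf{p}_b=\tau_{p_b}^{k+1}$ for the $1$-pattern over $\mathbf{B}^{[k]}$ whose root is $\iota(b)=\tau_{p_b}^{k}$, the entry $\mathbf{M}^{[k+1]}_{\iota(b),\,\eta'}$ is the number of first-generation children of $\mathbf{p}_b$ whose $\mathbf{B}^{[k]}$-value equals $\eta'$.

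The heart of the argument is to match these two counts. I would observe that passing from $\tau_{p_b}^{k}$ to $\tau_{p_b}^{k+1}$ performs exactly one substitution step: for each child $g\in F_{p_b}$ with $\lvert g\rvert=1$ and $p_b(g)=c$, the symbol at $g$ is replaced by the induced pattern rooted there, whose depth-$k$ truncation is $\tau_{p_c}^{k}$. Thus, as a $1$-pattern over $\mathbf{B}^{[k]}$, $\mathbf{p}_b$ has exactly the same first-generation node set $\{g\in F_{p_b}:\lvert g\rvert=1\}$ as $p_b$ has over $\mathcal{B}$, and the $\mathbf{B}^{[k]}$-value attached to $g$ is $\tau_{p_{p_b(g)}}^{k}=\iota(p_b(g))$. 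Since $\iota$ is injective, the child $g$ carries value $\eta'=\iota(c)$ precisely when $p_b(g)=c$, so no two distinct child types are collapsed and none is split. Counting then gives $\mathbf{M}^{[k+1]}_{\iota(b),\iota(c)}=\lvert\{g:p_b(g)=c\}\rvert=M_{bc}$ for all $b,c\in\mathcal{B}$, which is the claimed equality.

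The step I expect to require the most care is this last one: verifying that relabeling $\mathcal{B}$-types by their depth-$k$ induced patterns neither merges nor separates offspring, so that the multiplicity of each child type is preserved. This is precisely where the $1$-spread hypothesis (determinism together with uniqueness) is essential, since it makes the single substitution step reversible at the root level and guarantees that $\iota$ is a genuine bijection rather than merely a map; this is what lets the entry-by-entry comparison go through without any bookkeeping of repeated or missing types.
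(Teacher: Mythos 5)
Your proof is correct and follows essentially the same route as the paper's: identify $\mathcal{B}$ with $\mathbf{B}^{[k]}$ via $b\mapsto\tau_{p_b}^{k}$ and then compare corresponding entries of $M$ and $\mathbf{M}^{[k+1]}$ through the single substitution step from $\tau_{p_b}^{k}$ to $\tau_{p_b}^{k+1}$. If anything, your version is tidier than the paper's, since you verify injectivity of the identification and preservation of child multiplicities explicitly, whereas the paper only checks entries equal to $1$.
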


\begin{proof}
First we note that $M$ is indexed by $\mathcal{B}$ and $\mathbf{M}^{[k+1]}$
is indexed by $\mathbf{B}^{[k]}$. By the same argument as in the above
paragraph, we have $\left\vert \mathcal{B}\right\vert =\left\vert \mathbf{B}%
^{[k]}\right\vert $, where $\left\vert A\right\vert $ denotes the number of
the set $A$. Suppose $M(b,c)=1$, this means there exists a $p\in \mathcal{S}$
and $g\in F_{p}$ with $\left\vert g\right\vert =1$ such that $p(\epsilon )=b$
and $p(g)=c$. Since $\mathcal{S}$ is an $1$-spread model, there exists a
unique $1$-pattern, say $q\in \mathcal{S}$ such that $q(\epsilon )=c$. Let $%
\tau _{p}^{k}\in \mathbf{B}^{[k]}$ (resp. $\tau _{q}^{k}\in \mathbf{B}^{[k]}$%
) be the induced $k$-pattern from $p$ (resp. $q$), it can be easily seen
that $\tau _{p}^{k+1}\in \mathbf{S}^{[k+1]}$ and $\mathbf{M}^{[k+1]}(\tau
_{p}^{k},\tau _{p}^{k})=1$, thus $M=\mathbf{M}^{[k+1]}$. The proof is
completed.
\end{proof}

For $b\in \mathcal{B}$, we denote by $\mathbf{b}\in \mathbf{B}^{[k]}$ the
type in $\mathbf{B}^{[k]}$ if $\mathbf{\bar{b}}(\epsilon )=b$. The
following result gives the formula of the spread rate for the projected spread
model $\left( \mathcal{S},\Phi ^{\lbrack k]}\right) $ in which $\mathcal{S}$
is an $1$-spread model and $\Phi ^{\lbrack k]}:\mathcal{P}^{[k]}\rightarrow 
\mathcal{A}$ is a $k$-block code.

\begin{theorem}
\label{Thm: 4}Let $\left( \mathcal{S},\Phi ^{\lbrack k]}\right) $ be a projected
spread model with $\mathcal{S}=\{p_{j}\}_{j=1}^{L}$ is an $1$-spread model
and $\Phi ^{\lbrack k]}:\mathcal{P}^{[k]}\rightarrow \mathcal{A}$ be a $k$%
-block code, $k\geq 1$. Suppose $\mathbf{S}^{[k+1]}=\{\mathbf{p}%
_{j}\}_{j=1}^{L}$ is the $1$- spread model induced from $\mathcal{S}$ over $%
\mathbf{B}^{[k]}$ defined as above, and $\mathbf{\Phi }^{[k]}:\mathbf{B}^{[k]}\to \mathcal{A}$ is the
associated $0$-block code. If $%
k_{n}=k$ $\forall n\in \mathbb{N}$ or $k_{n}\rightarrow \infty $ as $%
n\rightarrow \infty $, then, for $b\in \mathcal{B}$, $a\in \mathcal{A}$, $%
p\in \mathcal{S}$ with $p(\epsilon )=b$, we have%
\begin{eqnarray*}
s_{p}(a,\mathcal{S},\Phi ^{\lbrack k]},\{k_{n}\}_{n=1}^{\infty }) &=&\sum_{%
\mathbf{c}:\text{ }\mathbf{\Phi }^{[k]}(\mathbf{c})=a}s_{\mathbf{p}}(\mathbf{%
c},\mathbf{S}^{[k+1]},\mathbf{\Phi }^{[k]},\{k_{n}\}_{n=1}^{\infty }) \\
&=&\sum_{\mathbf{c}:\text{ }\mathbf{\Phi }^{[k]}(\mathbf{c})=a}\mathbf{w}(%
\mathbf{c}) \\
&=&\sum_{c:\text{ }c\in \Gamma _{a}^{k}}w(c)\text{,}
\end{eqnarray*}%
where $\mathbf{w}$ (resp. $w$) is the normalized left eigenvector of the $\xi$-matrix $\mathbf{M}^{[k+1]}$ (resp. $M$) of $\mathbf{S}^{[k+1]}$ (resp. $S$) corresponding to the maximal eigenvalue $\rho_{\mathbf{M}}$ (resp. $\rho_M$) of $\mathbf{M}$ (resp. $M$),
$\Gamma _{a}^{k}=\{c\in \mathcal{B}:\Phi ^{\lbrack k]}(\tau _{q}^{k})=a\text{
and }~q\in\mathcal{S}~\text{with}~q(\epsilon )=c\}\text{,}$%
and $\mathbf{p}\in \mathbf{S}^{[k+1]}$ represents $p\in \mathcal{S}$.
\end{theorem}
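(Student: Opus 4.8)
The plan is to reduce the $k$-block code problem to the $0$-block code case already settled in Theorem \ref{Thm: 2}, working over the induced $1$-spread model $\mathbf{S}^{[k+1]}$ on the enlarged type set $\mathbf{B}^{[k]}$ together with its associated $0$-block code $\mathbf{\Phi}^{[k]}$. The structural fact that makes this work is the one recorded in Lemma \ref{Lma: 1}: since $\mathcal{S}$ is a $1$-spread model, each type $c\in\mathcal{B}$ determines a unique induced $k$-pattern $\tau_q^k$ (with $q\in\mathcal{S}$, $q(\epsilon)=c$), so $\mathbf{c}\mapsto c:=\mathbf{\bar{c}}(\epsilon)$ is a bijection $\mathbf{B}^{[k]}\to\mathcal{B}$ under which the two $\xi$-matrices agree, $M=\mathbf{M}^{[k+1]}$. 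Under this bijection $\mathbf{\Phi}^{[k]}(\mathbf{c})=\Phi^{[k]}(\tau_q^k)$, so the fibre $\{\mathbf{c}:\mathbf{\Phi}^{[k]}(\mathbf{c})=a\}$ corresponds precisely to the set $\Gamma_a^k$.

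First I would establish the counting identity behind the first equality. Reading $\tau_p$ through the $k$-window carried by each node $g$ exhibits it as the induced infinite spread pattern $\tau_{\mathbf{p}}$ of $\mathbf{S}^{[k+1]}$ viewed over $\mathbf{B}^{[k]}$; because the $1$-spread property forces the window at $g$ to be determined by the single symbol $\tau_p(g)$, evaluating the $k$-block code $\Phi^{[k]}$ along $\tau_p$ coincides node by node with evaluating the $0$-block code $\mathbf{\Phi}^{[k]}$ along $\tau_{\mathbf{p}}$. Hence $O_a(\phi(\tau_p)|_{\Delta_{s_n}^{s_{n+1}}})$ matches the corresponding occurrence count in the induced model while the supports have equal cardinality, which yields
\[
s_p(a,\mathcal{S},\Phi^{[k]},\{k_n\}_{n=1}^{\infty})=\sum_{\mathbf{c}:\ \mathbf{\Phi}^{[k]}(\mathbf{c})=a} s_{\mathbf{p}}(\mathbf{c},\mathbf{S}^{[k+1]},\mathbf{\Phi}^{[k]},\{k_n\}_{n=1}^{\infty}).
\]

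Next I would invoke the spread-rate formula for $1$-spread models (Proposition 2 of \cite{ban2023spread}, the first theorem above) applied to $\mathbf{S}^{[k+1]}$: each hidden-type spread rate equals the corresponding component $\mathbf{w}(\mathbf{c})$ of the normalized left eigenvector of $\mathbf{M}^{[k+1]}$ for its maximal eigenvalue, which gives the second equality. Finally, Lemma \ref{Lma: 1} gives $M=\mathbf{M}^{[k+1]}$, so the two matrices share a maximal eigenvalue and a normalized left eigenvector; under the bijection above this reads $\mathbf{w}(\mathbf{c})=w(c)$, and re-indexing the fibre $\{\mathbf{c}:\mathbf{\Phi}^{[k]}(\mathbf{c})=a\}$ as $\Gamma_a^k$ produces the third equality. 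Equivalently, one may apply Theorem \ref{Thm: 2} directly to the projected spread model $(\mathbf{S}^{[k+1]},\mathbf{\Phi}^{[k]})$, which collapses the first two equalities into a single step.

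I expect the main obstacle to be the careful bookkeeping in the counting identity: one must confirm that reading the $k$-block code along $\tau_p$ genuinely reproduces the $0$-block code along $\tau_{\mathbf{p}}$ with no misaligned or double-counted windows, and that the boundary discrepancies between the two support families are negligible in the density limit. Once this is in place, the two admissible regimes $k_n=k$ and $k_n\to\infty$ are handled exactly as in the proof of Theorem \ref{Thm: 2}, by averaging the per-level ratios through Lemma \ref{Lma: 2}, so that no analytic difficulty arises beyond the $0$-block code case.
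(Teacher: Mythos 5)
Your proposal is correct and takes essentially the same route as the paper: like the paper's proof, you identify the $k$-windows of $\tau_p$ node by node with the labels of $\tau_{\mathbf{p}}$ over $\mathbf{B}^{[k]}$ to obtain the first equality, apply Theorem \ref{Thm: 2} to the pair $(\mathbf{S}^{[k+1]},\mathbf{\Phi}^{[k]})$ for the second, and use Lemma \ref{Lma: 1} together with the bijection $\mathbf{c}\mapsto\mathbf{\bar{c}}(\epsilon)$ to re-index the fibre $\{\mathbf{c}:\mathbf{\Phi}^{[k]}(\mathbf{c})=a\}$ as $\Gamma_a^k$ for the third. Your handling of the regimes $k_n=k$ and $k_n\rightarrow\infty$ via Lemma \ref{Lma: 2} also mirrors the paper's argument, so no further comment is needed.
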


\begin{proof}
\textbf{1}. For $b\in \mathcal{B}$ and $p\in \mathcal{S}$ with $p(\epsilon
)=b$. Suppose $\mathbf{b}\in \mathbf{B}^{[k]}$ is the unique type defined
as above, and $\mathbf{p}\in \mathbf{S}^{[k+1]}$ with $\mathbf{p}(\epsilon )=%
\mathbf{b}$. Note that if $a\in \mathcal{A}$, suppose $g\in F_{\tau _{p}}$
with $\left\vert g\right\vert =n$, if $\mathbf{\bar{c}}:=\tau _{p}|_{\left(
F_{\tau _{p}}\right) _{n}^{n+k-1}}\in \mathcal{P}_{k}$ and $\mathbf{\Phi }%
^{[k]}(\mathbf{\bar{c}})=a$ (recall that $\mathbf{c\in B}^{[k]}$ is a type
in $\mathbf{B}^{[k]}$ and $\mathbf{\bar{c}}$ is the corresponding $k$%
-pattern in $\mathcal{B}$). This indicates that 
\[
\Phi ^{\lbrack k]}\left( \tau _{p}|_{\left( F_{\tau _{p}}\right)
_{n}^{n+k-1}}\right) =\Phi ^{\lbrack k]}\left( \mathbf{\bar{c}}\right) =a
\]%
and $\mathbf{\Phi }^{[k]}(\tau _{\mathbf{p}}|_{F_{\mathbf{\tau }_{\mathbf{p}%
}}\cap \Sigma _{d}^{n}})=a$. Therefore, it follows from Theorem \ref{Thm: 2} that we have%
\begin{eqnarray*}
s_{p}(a,\mathcal{S},\Phi ^{\lbrack k]},\{k_{n}\}_{n=1}^{\infty })
&=&\lim_{n\rightarrow \infty }\frac{O_{a}(\phi (\tau _{p})|_{\Delta
_{s_{n}}^{s_{n+1}}(F_{\phi (\tau _{p})})})}{\left\vert \Delta
_{s_{n}}^{s_{n+1}}(F_{\phi (\tau _{p})})\right\vert } \\
&=&\sum_{\mathbf{\bar{c}}:\text{ }\Phi ^{\lbrack k]}(\mathbf{\bar{c}}%
)=a}\lim_{n\rightarrow \infty }\frac{O_{\mathbf{\bar{c}}}(\tau _{p}|_{\Delta
_{s_{n}}^{s_{n+1}+k-1}(F_{\tau _{p}})})}{\left\vert \Delta
_{s_{n}}^{s_{n+1}+k-1}(F_{\tau _{p}})\right\vert } \\
&=&\sum_{\mathbf{c}:\text{ }\mathbf{\Phi }^{[k]}(\mathbf{c}%
)=a}\lim_{n\rightarrow \infty }\frac{O_{\mathbf{c}}(\tau _{\mathbf{p}%
}|_{\Delta _{s_{n}}^{s_{n+1}}(F_{\tau _{\mathbf{p}}})})}{\left\vert \Delta
_{s_{n}}^{s_{n+1}}(F_{\tau _{\mathbf{p}}})\right\vert } \\
&=&\sum_{\mathbf{c}:\text{ }\mathbf{\Phi }^{[k]}(\mathbf{c})=a}s_{\mathbf{p}%
}(\mathbf{c},\mathbf{S}^{[k]},\mathbf{\Phi }^{[k]},\{k_{n}\}_{n=1}^{\infty })
\\
&=&\sum_{\mathbf{c}:\text{ }\mathbf{\Phi }^{[k]}(\mathbf{c})=a}\mathbf{w}(%
\mathbf{c})\text{.}
\end{eqnarray*}

\textbf{2}. For any $c\in \mathcal{B}$ we denote by $\mathbf{c}\in \mathbf{B}%
^{[k]}$ the unique type in $\mathbf{B}^{[k]}$ such that $\mathbf{\bar{c}}%
(\epsilon )=c$, and $q\in \mathcal{S}$ is such that $q(\epsilon )=c$. It
follows from the definitions of $\mathbf{B}^{[k]}$ and $\mathbf{\Phi }^{[k]}$ that we obtain 
\begin{eqnarray*}
\left\{ \mathbf{c}\in \mathbf{B}^{[k]}:\text{ }\mathbf{\Phi }^{[k]}(\mathbf{c%
})=a\right\}  &=&\left\{ \mathbf{\bar{c}}\in \mathcal{P}_{k}:\text{ }\Phi
^{\lbrack k]}(\mathbf{\bar{c}})=a\right\}  \\
&=&\left\{ \mathbf{\bar{c}}\in \mathcal{P}_{k}:\Phi ^{\lbrack k]}\left( \tau
_{q}^{k}\right) \text{ }=a\right\}. 
\end{eqnarray*}%
Under the same argument as Lemma \ref{Lma: 1}, $M$ and $\mathbf{M}^{[k+1]}$
have the same dimension and if $M$ is indexed by $\mathcal{B}=\{b_{1},\ldots
,b_{L}\}$, then $\mathbf{M}^{[k+1]}$ is indexed by 
\[
\mathbf{B}^{[k]}=\{\mathbf{b}_{1},\ldots ,\mathbf{b}_{L}\}=\{\tau
_{p_{1}}^{k},\ldots ,\tau _{p_{L}}^{k}\},
\]%
where $p_{i}(\epsilon )=b_{i}$ for $1\leq i\leq L$. Thus, Lemma \ref{Lma: 1}
is applied again, and we have 
\begin{eqnarray*}
\sum_{\mathbf{c\in B}^{[k]}:\text{ }\mathbf{\Phi }^{[k]}(\mathbf{c})=a}%
\mathbf{w}(\mathbf{c}) &=&\sum_{\mathbf{\bar{c}}\in \mathcal{P}_{k}:\text{ }%
\Phi ^{\lbrack k]}\left( \mathbf{\bar{c}}\right) \text{ }=a}\mathbf{w}(%
\mathbf{c}) \\
&=&\sum_{\mathbf{\bar{c}}\in \mathcal{P}_{k}:\text{ }\Phi ^{\lbrack
k]}\left( \tau _{q}^{k}\right) \text{ }=a}\mathbf{w}(\mathbf{c}) \\
&=&\sum_{c\in \mathcal{B}:\text{ }\Phi ^{\lbrack k]}\left( \tau
_{q}^{k}\right) \text{ }=a,\text{ }q(\epsilon )=c}w(c) \\
&=&\sum_{c\in \Gamma _{a}^{k}}w(c)\text{.}
\end{eqnarray*}%
This completes the proof.
\end{proof}

\subsection{$m$-spread model and $0$-block code for $m\geq 1$\textbf{\ }}

Let $\mathcal{S}=\{p_{j}\}_{j=1}^{L}$ be an $m$-spread model over $\mathcal{B%
}$, that is, $p_{j}\in \mathcal{P}_{m}$ $\forall~1\leq j\leq L$ and for any $%
p\in \mathcal{S}$ and $\forall g\in F_{p}$ with $\left\vert g\right\vert =1$%
, there exists a unique $q\in \mathcal{S}$ such that 
\begin{equation}
q|_{\left( F_{q}\right) _{0}^{m-1}}=p|_{\left( F_{p}\right) _{0}^{m-1}(g)}%
\text{.}  \label{7}
\end{equation}%
Suppose $\Phi :\mathcal{B}\rightarrow \mathcal{A}$ is a $0$-block code and $(%
\mathcal{S},\Phi )$ is the associated projected spread model. Denote by $\mathbf{%
B}^{[m-1]}=\mathbf{B}^{[m-1]}(\mathcal{S})$ the set of $\mathbf{b}\in 
\mathcal{P}_{m-1}$ over $\mathcal{B}$ in which $\mathbf{b}=p|_{\left(
F_{p}\right) _{0}^{m-1}(\epsilon )}$ or $\mathbf{b}=p|_{\left( F_{p}\right)
_{0}^{m-1}(g)}$, for some $p\in \mathcal{S}$ and $g\in F_{p}$ with $%
\left\vert g\right\vert =1$. That is, we collect all $\left( m-1\right) $%
-patterns that appear in $\left( F_{p}\right) _{0}^{m-1}(\epsilon )$ or $\left(
F_{p}\right) _{0}^{m-1}(g)$ of $p\in \mathcal{S}$ with $\left\vert
g\right\vert =1$. Define $\mathbf{S}^{[m]}=\mathbf{S}^{[m]}(\mathcal{S})$ as the collection of $1$-patterns over $\mathbf{B}^{[m-1]}$ as follows. For $%
p\in \mathcal{S}$ with $\mathbf{p}(\epsilon ):=p|_{\left( F_{p}\right)
_{0}^{m-1}(\epsilon )}$, and $\mathbf{p}(g_{1}):=p|_{\left( F_{p}\right)
_{0}^{m-1}(g_{1})},\ldots ,\mathbf{p}(g_{d_{p}}):=p|_{\left( F_{p}\right)
_{0}^{m-1}(g_{d_{p}})}\in \mathbf{B}^{[m-1]}$ in which $\left\vert
g_{i}\right\vert =1$ $\forall i=1,\ldots ,d_{p}$, we write 
\[
\mathbf{p}=(\mathbf{p}^{(0)};\mathbf{p}^{(1)})=(\mathbf{p}(\epsilon );%
\mathbf{p}(g_{1}),\ldots ,\mathbf{p}(g_{d_{p}})),
\]%
which is a $1$-pattern over $\mathbf{B}^{[m-1]}$, and we say that $\mathbf{p}\in 
\mathbf{S}^{[m]}$ \emph{represents} $p\in \mathcal{S}$ with respect to the
new type set $\mathbf{B}^{[m-1]}$. Finally, we define 
\[
\mathbf{S}^{[m]}=\{\mathbf{p}=(\mathbf{p}(\epsilon );\mathbf{p}%
(g_{1}),\ldots ,\mathbf{p}(g_{d_{p}})):\mathbf{p}\text{ represents }p\text{
with respect to }\mathbf{B}^{[m-1]}\}\text{\textbf{.}}
\]%

Due to the fact that $\mathcal{S}$ is an $m$-spread model over $\mathcal{B}$%
, it can be easily seen that $\mathbf{S}^{[m]}$ is a $1$-spread model over $%
\mathbf{B}^{[m-1]}$ (cf. (\ref{7})).

Let $b\in \mathcal{B}$ and 
\[
\theta _{b}=\{\mathbf{b}\in \mathbf{B}^{[m-1]}:\mathbf{\bar{b}}(\epsilon
)=b\}.
\]%
Let $\mathbf{p\in S}^{[m]}$ such that $\mathbf{p}^{(0)}=\mathbf{b}$ and $%
\tau _{\mathbf{p}}$ be the associated induced pattern from $\mathbf{p}$. For 
$b,c\in \mathcal{B}$ and $p\in \mathcal{S}$ with $p(\epsilon )=b$, Theorem %
\ref{Thm: 1} below provides a method to determine the value of 
\[
s_{p}(c,\mathcal{S},\{k_{n}\}_{n=1}^{\infty })=\lim_{n\rightarrow \infty }%
\frac{O_{c}(\tau _{p}|_{\Delta _{s_{n}}^{s_{n+1}}(F_{\tau _{p}})})}{\left\vert
\Delta _{s_{n}}^{s_{n+1}}(F_{\tau _{p}})\right\vert }\text{.}
\]

\begin{theorem}[Spread rate for $m$-spread model]
\label{Thm: 1}Let $\mathcal{S}=\{p_{i}\}_{i=1}^{L}$ be a $m$-spread model over $\mathcal{B}$, and $\mathbf{S}^{[m]}=\{\mathbf{p}_{i}\}_{i=1}^{\mathbf{L%
}}$ be the associated induced $1$-spread model over $\mathbf{B}^{[m-1]}$.
Suppose $k_{n}=k$ $\forall n\in \mathbb{N}$, or $k_{n}\rightarrow \infty $
as $n\rightarrow \infty $. Then for $b,c\in \mathcal{B}$ and $p\in \mathcal{S%
}$ with $p(\epsilon )=b$, we have 
\[
s_{p}(c,\mathcal{S},\{k_{n}\}_{n=1}^{\infty })=\lim_{n\rightarrow \infty }%
\frac{O_{c}(\tau _{p}|_{\Delta _{s_{n}}^{s_{n+1}}(F_{\tau _{p}})})}{\left\vert
\Delta _{s_{n}}^{s_{n+1}}(F_{\tau _{p}})\right\vert }=\sum_{\mathbf{c}:\text{ }%
\mathbf{c}\in \theta _{c}}\mathbf{w}(\mathbf{c})\text{,} 
\]%
where $\mathbf{w}(\mathbf{c})$ is the $\mathbf{c}$th component of the normalized left eigenvector $\mathbf{w}$ of the $\mathbf{\xi }$-matrix $\mathbf{M}=%
\mathbf{M}_{\mathbf{\xi }}$.
\end{theorem}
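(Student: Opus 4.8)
The plan is to recast the $m$-spread model $\mathcal{S}$ over $\mathcal{B}$ as the induced $1$-spread model $\mathbf{S}^{[m]}$ over $\mathbf{B}^{[m-1]}$, and then read off the answer from the $1$-spread model spread rate formula (Proposition 2 of \cite{ban2023spread}, recalled above as the first theorem). Let $\mathbf{p}\in\mathbf{S}^{[m]}$ be the $1$-pattern representing $p$ with $\mathbf{p}^{(0)}=\mathbf{b}$, and let $\tau_{\mathbf{p}}$ be the induced pattern from $\mathbf{p}$ over $\mathbf{B}^{[m-1]}$. The heart of the reduction is to establish a faithful correspondence between $\tau_p$ and $\tau_{\mathbf{p}}$: because $\mathbf{S}^{[m]}$ is constructed so that the branching of each super-type mirrors the branching of the corresponding $p\in\mathcal{S}$ (this is exactly what condition (\ref{7}) guarantees, and is what makes $\mathbf{S}^{[m]}$ a $1$-spread model), the supports $F_{\tau_p}$ and $F_{\tau_{\mathbf{p}}}$ coincide as subsets of the same tree $\mathcal{T}_d$, and at every node $g$ we have $\mathbf{\bar{c}}(\epsilon)=\tau_p(g)$ whenever $\tau_{\mathbf{p}}(g)=\mathbf{c}$. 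In words, the super-type occupying a node is an $(m-1)$-pattern whose root symbol is precisely the $\mathcal{B}$-type that $\tau_p$ places there.

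From this correspondence I would derive the level-wise counting identity, valid for every level $\ell$,
\[
O_c(\tau_p|_{F_{\tau_p}\cap\Sigma^{\ell}})=\sum_{\mathbf{c}\in\theta_c}O_{\mathbf{c}}(\tau_{\mathbf{p}}|_{F_{\tau_{\mathbf{p}}}\cap\Sigma^{\ell}}),
\]
since a node carries the $\mathcal{B}$-type $c$ under $\tau_p$ exactly when it carries a super-type $\mathbf{c}$ with $\mathbf{\bar{c}}(\epsilon)=c$, i.e. $\mathbf{c}\in\theta_c$. Summing over the levels in $\Delta_{s_n}^{s_{n+1}}$ and using $F_{\tau_p}=F_{\tau_{\mathbf{p}}}$ to equate the denominators $\left\vert\Delta_{s_n}^{s_{n+1}}(F_{\tau_p})\right\vert=\left\vert\Delta_{s_n}^{s_{n+1}}(F_{\tau_{\mathbf{p}}})\right\vert$, I obtain
\[
\frac{O_c(\tau_p|_{\Delta_{s_n}^{s_{n+1}}(F_{\tau_p})})}{\left\vert\Delta_{s_n}^{s_{n+1}}(F_{\tau_p})\right\vert}=\sum_{\mathbf{c}\in\theta_c}\frac{O_{\mathbf{c}}(\tau_{\mathbf{p}}|_{\Delta_{s_n}^{s_{n+1}}(F_{\tau_{\mathbf{p}}})})}{\left\vert\Delta_{s_n}^{s_{n+1}}(F_{\tau_{\mathbf{p}}})\right\vert}.
\]

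Next I would pass to the limit. Each summand on the right converges to the $1$-spread spread rate $s_{\mathbf{p}}(\mathbf{c},\mathbf{S}^{[m]},\{k_n\}_{n=1}^{\infty})$, which by the $1$-spread model formula applied to $\mathbf{S}^{[m]}$ (whose $\xi$-matrix is $\mathbf{M}$) equals $\mathbf{w}(\mathbf{c})$, the $\mathbf{c}$th component of the normalized left eigenvector of $\mathbf{M}$. Since $\theta_c$ is a finite set, linearity of limits then gives $s_p(c,\mathcal{S},\{k_n\}_{n=1}^{\infty})=\sum_{\mathbf{c}\in\theta_c}\mathbf{w}(\mathbf{c})$. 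The two admissible cases $k_n=k$ for all $n$ and $k_n\to\infty$ are handled exactly as in the proof of Theorem \ref{Thm: 2}, namely by invoking Lemma \ref{Lma: 2} to combine the per-level ratios across the block of levels making up $\Delta_{s_n}^{s_{n+1}}$.

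The step I expect to be the main obstacle is the first one: rigorously verifying that the higher-block recoding is faithful, that is, that the two support trees genuinely coincide under the natural identification of nodes and that $\mathbf{\bar{c}}(\epsilon)=\tau_p(g)$ holds at every node. This requires a careful check that the $(m-1)$-pattern "look-ahead" encoded in each super-type is always consistent with the branching prescribed by $\mathcal{S}$ (so that no boundary or indexing offset creeps in), and this is precisely where condition (\ref{7}) must be used. Once this bookkeeping is settled, the counting identity and the limit computation are routine.
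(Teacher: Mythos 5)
Your proposal is correct, and it is essentially the argument the paper intends: the paper in fact states Theorem \ref{Thm: 1} \emph{without} proof, treating it as an immediate consequence of the higher-block construction of $\mathbf{S}^{[m]}$ over $\mathbf{B}^{[m-1]}$ together with the $1$-spread rate formula recalled at the start of Section 2.1, and then uses it as a black box in the proof of Theorem \ref{Thm: 3}. Your reduction --- using condition (\ref{7}) to check that $F_{\tau_p}=F_{\tau_{\mathbf{p}}}$ and that the super-type at each node is an $(m-1)$-pattern whose root symbol is $\tau_p(g)$, so that $O_c(\tau_p|_{F_{\tau_p}\cap\Sigma^{\ell}})=\sum_{\mathbf{c}\in\theta_c}O_{\mathbf{c}}(\tau_{\mathbf{p}}|_{F_{\tau_{\mathbf{p}}}\cap\Sigma^{\ell}})$, then applying the $1$-spread formula to $\mathbf{M}$ and Lemma \ref{Lma: 2} for the $k_n=k$ and $k_n\rightarrow\infty$ cases --- is precisely the recoding strategy the paper itself deploys in the proofs of Theorems \ref{Thm: 2} and \ref{Thm: 4}, so your write-up supplies the details the paper omits.
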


\begin{theorem}
\label{Thm: 3}Let $\mathcal{S}=\{p_{j}\}_{j=1}^{L}$ be an $m$-spread model
over $\mathcal{B}$ and $\Phi :\mathcal{B}\rightarrow \mathcal{A}$ be a $0$%
-block code. Suppose $(\mathcal{S},\Phi )$ is the projected spread model and $%
k_{n}=k$ $\forall n\in \mathbb{N}$, or $k_{n}\rightarrow \infty $ as $%
n\rightarrow \infty $. Then, for $b\in \mathcal{B}$, $a\in \mathcal{A}$ and $p\in 
\mathcal{S}$ with $p(\epsilon )=b$, we have 
\[
s_{p}(a,\mathcal{S},\Phi ,\{k_{n}\}_{n=1}^{\infty })=\sum_{c:\text{ }c\in
\Phi ^{-1}(a)}\sum_{\mathbf{c}:\text{ }\mathbf{c}\in \theta _{c}}\mathbf{w}(%
\mathbf{c})\text{,} 
\]%
where $\mathbf{w}(\mathbf{c})$ is the $\mathbf{c}$th component of the normalized left eigenvector $\mathbf{w}$ of the $%
\mathbf{\xi }$-matrix $\mathbf{M}=\mathbf{M}_{\mathbf{\xi }}$ from the $1$%
-spread model $\mathbf{S}^{[m]}$ over $\mathbf{B}^{[m-1]}$.
\end{theorem}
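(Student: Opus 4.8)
The plan is to treat this exactly as the $0$-block code step in Theorem \ref{Thm: 2}, but now feeding in Theorem \ref{Thm: 1} as the input in place of the $1$-spread rate formula. The key observation is that Theorem \ref{Thm: 1} already supplies the full unprojected spread rate $s_p(c,\mathcal{S},\{k_n\}_{n=1}^{\infty})$ for \emph{every} individual type $c\in\mathcal{B}$ of the $m$-spread model; all that remains is to push these rates through the pointwise relabeling $\Phi$ and sum over the fibers.

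First I would record the elementary fact that, because $\Phi$ acts coordinatewise, $\phi(\tau_p)$ has the same support as $\tau_p$, i.e. $F_{\phi(\tau_p)}=F_{\tau_p}$, and that for any finite window $F\subseteq F_{\tau_p}$ the occurrences of an explicit type $a\in\mathcal{A}$ decompose disjointly over its fiber:
\[
O_a(\phi(\tau_p)|_F)=\sum_{c\in\Phi^{-1}(a)}O_c(\tau_p|_F).
\]
Indeed, each node $g\in F$ with $\tau_p(g)=c$ and $\Phi(c)=a$ contributes exactly one occurrence of $a$ to $\phi(\tau_p)|_F$, and distinct values $c$ give disjoint contributions, so summing over $c\in\Phi^{-1}(a)$ counts every $a$-occurrence precisely once. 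This is the same combinatorial identity used in the derivation of (\ref{3}), only now stated without reference to the matrix $M$, since the underlying model is $m$-spread.

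Next I would specialize $F=\Delta_{s_n}^{s_{n+1}}(F_{\tau_p})$, divide by $\left\vert\Delta_{s_n}^{s_{n+1}}(F_{\tau_p})\right\vert$ (which equals $\left\vert\Delta_{s_n}^{s_{n+1}}(F_{\phi(\tau_p)})\right\vert$ by the support identity), and pass to the limit. Since $\mathcal{B}$ is finite, the fiber $\Phi^{-1}(a)$ is finite, and since each limit $s_p(c,\mathcal{S},\{k_n\}_{n=1}^{\infty})$ exists by Theorem \ref{Thm: 1}, the limit of the finite sum equals the sum of the limits, yielding
\[
s_p(a,\mathcal{S},\Phi,\{k_n\}_{n=1}^{\infty})=\sum_{c\in\Phi^{-1}(a)}s_p(c,\mathcal{S},\{k_n\}_{n=1}^{\infty}).
\]
Substituting the formula of Theorem \ref{Thm: 1}, namely $s_p(c,\mathcal{S},\{k_n\}_{n=1}^{\infty})=\sum_{\mathbf{c}\in\theta_c}\mathbf{w}(\mathbf{c})$, then produces the double sum asserted in the statement.

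I do not expect a serious obstacle here: the decomposition over $\Phi^{-1}(a)$ is purely combinatorial, and the interchange of limit and sum is legitimate solely because the fiber is finite and each constituent limit is already known to converge. The only points requiring care are bookkeeping ones — verifying $F_{\phi(\tau_p)}=F_{\tau_p}$ so that the normalizing denominators agree, and confirming that the hypothesis on $\{k_n\}_{n=1}^{\infty}$ (either eventually constant or tending to infinity) is precisely the hypothesis under which Theorem \ref{Thm: 1} guarantees convergence, so that it may be invoked termwise. With those checks in place the result follows immediately.
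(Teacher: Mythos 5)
Your proposal is correct and takes essentially the same route as the paper's own proof: the paper likewise observes that the $0$-block code preserves the support window (so the normalizing denominators agree), decomposes $O_{a}(\phi(\tau_{p})|_{\Delta_{s_{n}}^{s_{n+1}}(F_{\tau_{p}})})$ disjointly over the fiber $\Phi^{-1}(a)$, interchanges the limit with the finite sum, and invokes Theorem \ref{Thm: 1} termwise to obtain $\sum_{c\in\Phi^{-1}(a)}\sum_{\mathbf{c}\in\theta_{c}}\mathbf{w}(\mathbf{c})$. There are no gaps; your explicit checks (finiteness of the fiber and matching the hypothesis on $\{k_{n}\}_{n=1}^{\infty}$ to that of Theorem \ref{Thm: 1}) are exactly the points the paper relies on.
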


\begin{proof}
Suppose $\mathbf{B}^{[m-1]}=\mathbf{B}^{[m-1]}(\mathcal{S})$ and $\mathbf{S}%
^{[m]}=\mathbf{S}^{[m]}(\mathcal{S})$ are defined as above. Let $b\in 
\mathcal{B}$ and $p\in \mathcal{S}$ with $p(\epsilon )=b$. Since $\Phi $ is
a $0$-block code, we have 
\begin{equation}
\Delta _{s_{n}}^{s_{n+1}}(\phi \left( \tau _{p}\right) )=\Delta
_{s_{n}}^{s_{n+1}}(\tau _{p})  \label{5}
\end{equation}%
Moreover, given the range of $\Delta _{s_{n}}^{s_{n+1}}(F_{\tau _{p}})$, if $%
g\in \Delta _{s_{n}}^{s_{n+1}}(F_{\tau _{p}})$ with $\tau _{p}(g)=c$ for $c\in
\Phi ^{-1}(a)$, then we have $\phi (\tau _{p})(g)=a$. Thus, we have 
\begin{equation}
O_{a}(\phi (\tau _{p})|_{\Delta _{s_{n}}^{s_{n+1}}(F_{\phi (\tau
_{p})})})=\sum_{c:\text{ }\Phi ^{-1}(a)}O_{c}(\tau _{p}|_{\Delta
_{s_{n}}^{s_{n+1}}(F_{\tau_{p}})})\text{.}  \label{4}
\end{equation}%
Combining (\ref{4}), (\ref{5}) and Theorem \ref{Thm: 1}, we obtain 
\begin{eqnarray*}
s_{p}(a,\mathcal{S},\Phi ,\{k_{n}\}_{n=1}^{\infty }) &=&\lim_{n\rightarrow
\infty }\frac{O_{a}(\phi (\tau _{p})|_{\Delta _{s_{n}}^{s_{n+1}}(F_{\phi (\tau
_{p})})})}{\left\vert \Delta _{s_{n}}^{s_{n+1}}(F_{\phi \left( \tau _{p}\right)}
)\right\vert } \\
&=&\lim_{n\rightarrow \infty }\frac{\sum_{c:\text{ }c\in \Phi
^{-1}(a)}O_{c}(\tau _{p}|_{\Delta _{s_{n}}^{s_{n+1}}(F_{\tau _{p}})})}{%
\left\vert \Delta _{s_{n}}^{s_{n+1}}(F_{\tau _{p}})\right\vert } \\
&=&\sum_{c:\text{ }c\in \Phi ^{-1}(a)}\lim_{n\rightarrow \infty }\frac{%
O_{c}(\tau _{p}|_{\Delta _{s_{n}}^{s_{n+1}}(F_{\tau _{p}})})}{\left\vert \Delta
_{s_{n}}^{s_{n+1}}(F_{\tau _{p}})\right\vert } \\
&=&\sum_{c:\text{ }c\in \Phi ^{-1}(a)}\sum_{\mathbf{c}:\text{ }\mathbf{c}\in
\theta _{c}}\mathbf{w}(\mathbf{c})\text{,}
\end{eqnarray*}%
where $\mathbf{w}(\mathbf{c})$ is the $\mathbf{c}$th component of the normalized left eigenvector $\mathbf{w}$ of the $%
\mathbf{\xi }$-matrix $\mathbf{M}=\mathbf{M}_{\mathbf{\xi }}$ from the $1$%
-spread model $\mathbf{S}^{[m]}$ over $\mathbf{B}^{[m-1]}$. This completes
the proof.
\end{proof}

\subsection{General cases: $m$-spread model and $k$-block code}

\subsubsection{The case where $m-1=k$}

Suppose $\mathcal{S}=\{p_{j}\}_{j=1}^{L}$ is the $\left( m-1\right) $-spread
model over $\mathcal{B}$ and $\Phi ^{\lbrack k]}$ is a $k$-block code with $%
m-1=k$. Using the method in Section 2.2, we denote by $\mathbf{B}^{[k]}=%
\mathbf{B}^{[k]}(\mathcal{S})$ a new type set obtained from $\mathcal{S}
$. Let $\mathbf{S}^{[m]}=\mathbf{S}^{[k+1]}=\mathbf{S}^{[k+1]}(\mathcal{S})$
be the $1$-spread model according to the new type set $\mathbf{B}^{[k]}$.
Let $\mathcal{A}$ be a type set and suppose $\mathbf{\Phi }^{[k]}:\mathbf{B%
}^{[k]}\rightarrow \mathcal{A}$ is the induced 0-block code on $\mathbf{B}%
^{[k]}$. Since $\mathbf{S}^{[k+1]}$ is a $1$-spread model and $\mathbf{\Phi 
}^{[k]}$ is a 0-block code over $\mathbf{B}^{[k]}$, Theorem \ref{Thm: 2}
is applied to compute the spread rate of the projected spread model $(\mathbf{S}%
^{[k+1]},\mathbf{\Phi }^{[k]})$ over $\mathbf{B}^{[k]}$ below.

\begin{theorem}\label{Thm: 2.4-1}
Let $b\in \mathcal{B}$, $a\in \mathcal{A}$ and $p\in S$ with $p(\epsilon )=b$%
. Then we have 
\[
s_{p}(a,\mathcal{S},\Phi ^{\lbrack k]},\{k_{n}\}_{n=1}^{\infty })=\sum_{%
\mathbf{c}:\text{ }\mathbf{\Phi }^{[k]}(\mathbf{c})=a}s_{\mathbf{p}}(\mathbf{%
c},\mathbf{S}^{[k+1]},\mathbf{\Phi} ^{\lbrack k]},\{k_{n}\}_{n=1}^{\infty }) 
\]
\end{theorem}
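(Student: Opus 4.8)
The plan is to carry out the reduction set up immediately before the statement. Since $m-1=k$, the model $\mathcal{S}$ is a $k$-spread model over $\mathcal{B}$, and the construction yields a genuine $1$-spread model $\mathbf{S}^{[k+1]}$ over the enlarged type set $\mathbf{B}^{[k]}$ together with an induced $0$-block code $\mathbf{\Phi}^{[k]}:\mathbf{B}^{[k]}\to\mathcal{A}$. Because the pair $(\mathbf{S}^{[k+1]},\mathbf{\Phi}^{[k]})$ satisfies exactly the hypotheses of Theorem \ref{Thm: 2}, the entire task is to show that the spread rate of $(\mathcal{S},\Phi^{[k]})$ is distributed, type by type over $\mathbf{B}^{[k]}$, among the spread rates of $(\mathbf{S}^{[k+1]},\mathbf{\Phi}^{[k]})$. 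This is the $k$-spread analogue of Part 1 of the proof of Theorem \ref{Thm: 4}, and I would follow that argument closely: fix $b\in\mathcal{B}$ and $p\in\mathcal{S}$ with $p(\epsilon)=b$, and let $\mathbf{p}\in\mathbf{S}^{[k+1]}$ be the $1$-pattern representing $p$, with $\mathbf{p}(\epsilon)=\mathbf{b}$.

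The central point is the identification of the two induced patterns. For each node $g\in F_{\tau_{\mathbf{p}}}$ with $|g|=n$, I claim that the symbol $\tau_{\mathbf{p}}(g)=\mathbf{c}\in\mathbf{B}^{[k]}$ is precisely the type whose associated $k$-pattern $\mathbf{\bar c}$ coincides with the depth-$k$ window of $\tau_p$ rooted at $g$, namely the restriction $\tau_p|_{(F_{\tau_p})_n^{n+k-1}}$ read off at $g$. Since $\mathbf{S}^{[k+1]}$ is a $1$-spread model over $\mathbf{B}^{[k]}$ while $\mathcal{S}$ is a $k$-spread model, this follows by induction on $n$ from the very construction of $\mathbf{S}^{[k+1]}$: the $k$-spread hypothesis forces overlapping windows to agree on their common levels and determines each successor window uniquely, which is exactly what makes $\mathbf{S}^{[k+1]}$ a well-defined $1$-spread model. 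In particular the supports $F_{\tau_p}$ and $F_{\tau_{\mathbf{p}}}$ share the same underlying tree level by level, so $|\Delta_{s_n}^{s_{n+1}}(F_{\tau_p})|=|\Delta_{s_n}^{s_{n+1}}(F_{\tau_{\mathbf{p}}})|$.

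With this correspondence in hand, I would invoke the defining relation $\mathbf{\Phi}^{[k]}(\mathbf{c})=\Phi^{[k]}(\mathbf{\bar c})$ of the induced $0$-block code to translate occurrences of the explicit symbol $a$. Since $\phi(\tau_p)(g)=\Phi^{[k]}(\tau_p|_{(F_{\tau_p})_n^{n+k-1}})=\mathbf{\Phi}^{[k]}(\tau_{\mathbf{p}}(g))$ at every node $g$, a straightforward count gives
\[
O_a(\phi(\tau_p)|_{\Delta_{s_n}^{s_{n+1}}(F_{\tau_p})})=\sum_{\mathbf{c}:\ \mathbf{\Phi}^{[k]}(\mathbf{c})=a}O_{\mathbf{c}}(\tau_{\mathbf{p}}|_{\Delta_{s_n}^{s_{n+1}}(F_{\tau_{\mathbf{p}}})}).
\]
Dividing by the common shell cardinality, letting $n\to\infty$, and pushing the finite sum through the limit, each resulting ratio is by definition $s_{\mathbf{p}}(\mathbf{c},\mathbf{S}^{[k+1]},\mathbf{\Phi}^{[k]},\{k_n\}_{n=1}^{\infty})$, whose existence is guaranteed because $(\mathbf{S}^{[k+1]},\mathbf{\Phi}^{[k]})$ falls under Theorem \ref{Thm: 2}. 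This produces the asserted identity.

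The step I expect to be the main obstacle is the faithful encoding asserted in the correspondence above: one must verify, by induction on the level $n$, that the $1$-spread dynamics of $\mathbf{S}^{[k+1]}$ reproduce exactly the $k$-spread dynamics of $\mathcal{S}$, so that the window rooted at each node of $\tau_p$ is carried by the corresponding node of $\tau_{\mathbf{p}}$ with no drift between the two trees; the delicate part is the overlap consistency of successive windows, which is guaranteed by the $k$-spread condition (the analogue of (\ref{7})) but must be tracked carefully. A related bookkeeping subtlety is that, if windows are counted by their full extent rather than by their root --- as in the companion argument for Theorem \ref{Thm: 4} --- the shell $\Delta_{s_n}^{s_{n+1}}$ on the $\tau_{\mathbf{p}}$ side corresponds to the range $\Delta_{s_n}^{s_{n+1}+k-1}$ on the $\tau_p$ side, and the resulting fixed shift of $k-1$ levels must be washed out in the limit. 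This is precisely where the hypothesis $k_n=k$ for all $n$ or $k_n\to\infty$ enters: under either assumption the extra levels contribute negligibly, and a sandwiching argument of the form provided by Lemma \ref{Lma: 2} matches the two ratios asymptotically and completes the reduction.
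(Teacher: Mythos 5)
Your proposal is correct and follows essentially the same route as the paper: the paper presents Theorem \ref{Thm: 2.4-1} as an immediate consequence of reducing $(\mathcal{S},\Phi^{[k]})$ to the $1$-spread model $\mathbf{S}^{[k+1]}$ over $\mathbf{B}^{[k]}$ with the induced $0$-block code $\mathbf{\Phi}^{[k]}$, to which Theorem \ref{Thm: 2} applies, and the window-to-type correspondence you establish is exactly the computation carried out in Part 1 of the proof of Theorem \ref{Thm: 4}. Your explicit treatment of the overlap consistency of successive windows and of the $k-1$ level shift absorbed via Lemma \ref{Lma: 2} fills in details the paper leaves implicit, but introduces no genuinely different method.
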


\subsubsection{The case where $m-1<k$}

Suppose $\mathcal{S}=\{p_{j}\}_{j=1}^{L}$ is the $m$-spread model and $\Phi
^{\lbrack k]}$ is a $k$-block code with $m-1<k$. Let 
\[
\mathbf{B}^{[k]}=\mathbf{B}^{[k]}(\mathcal{S})=\{\tau _{p}^{k}:p\in \mathcal{%
S}\}\text{,} 
\]%
and $\mathbf{S}^{[k+1]}=\mathbf{S}^{[k+1]}(\mathcal{S})$ be the $1$-spread
model induced from $\mathcal{S}$ over $\mathbf{B}^{[k]}$, or more precisely, 
\[
\mathbf{S}^{[k+1]}=\{\tau _{p}^{k+1}:p\in \mathcal{S}\}\text{.} 
\]%
Denote by $\mathbf{\Phi }^{[k]}$ the associated 0-block code on $\mathbf{B}%
^{[k]}$. Thus, Theorem \ref{Thm: 4} is applied to compute the spread rate of
the projected spread model $(\mathbf{S}^{[k+1]},\mathbf{\Phi }^{[k]})$ over $%
\mathbf{B}^{[k]}$. The result of spread rate is the same as Theorem \ref{Thm: 2.4-1}.

\subsubsection{The case where $m-1>k$}

Suppose $\mathcal{S}=\{p_{j}\}_{j=1}^{L}$ is the $m$-spread model and $\Phi
^{\lbrack k]}$ is a $k$-block code with $m-1>k$. Denote 
\[
\mathbf{B}^{[k]}=\mathbf{B}^{[k]}(\mathcal{S})=\{\eta \in \mathcal{P}_{k}:%
\text{ }\eta \text{ appears in some }p\in \mathcal{S}\}\text{.} 
\]%
Let $\mathbf{S}^{[m-k]}=\mathbf{S}^{[m-k]}(\mathcal{S})$ be the $\left(
m-k\right) $-spread model induced from $\mathcal{S}$ over $\mathbf{B}^{[k]}$
and $\mathbf{\Phi }^{[k]}\mathbf{:B}^{[k]}\rightarrow \mathcal{A}$ be the 0-block code over $\mathbf{B}^{[k]}$. Therefore, Theorem \ref{Thm: 3} can be
used to solve the spread rate of the projected spread model $(\mathbf{S}^{[m-k]},%
\mathbf{\Phi }^{[k]})$ over $\mathbf{B}^{[k]}$.

\begin{theorem}
Let $b\in \mathcal{B}$, $a\in \mathcal{A}$ and $p\in S$ with $p(\epsilon )=b$%
. Then we have 
\[
s_{p}(a,\mathcal{S},\Phi ^{\lbrack k]},\{k_{n}\}_{n=1}^{\infty })=\sum_{%
\mathbf{c}:\text{ }\mathbf{\Phi }^{[k]}(\mathbf{c})=a}s_{\mathbf{p}}(\mathbf{%
c},\mathbf{S}^{[m-k]},\mathbf{\Phi }^{[k]},\{k_{n}\}_{n=1}^{\infty }) 
\]
\end{theorem}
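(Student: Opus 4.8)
The plan is to reduce this general situation---an $m$-spread model $\mathcal{S}$ over $\mathcal{B}$ together with a $k$-block code $\Phi^{[k]}$ in the regime $m-1>k$---to the already-treated case of a higher-order spread model equipped with a $0$-block code, and then to invoke Theorem \ref{Thm: 3}. The point worth emphasizing at the outset is that $m-1>k$ forces $m-k\geq 2$, so the induced model $\mathbf{S}^{[m-k]}$ over the collapsed alphabet $\mathbf{B}^{[k]}$ is a genuine $(m-k)$-spread model rather than a $1$-spread model; this is precisely why Theorem \ref{Thm: 3} (an $m$-spread model against a $0$-block code), and not Theorem \ref{Thm: 2}, is the correct tool here.

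First I would make the recoding precise. Each $k$-pattern $\eta\in\mathbf{B}^{[k]}$ that appears in some $p\in\mathcal{S}$ is declared a single hidden type of the new alphabet, and I would check that $\mathbf{S}^{[m-k]}=\{\tau_p^{m-k}:p\in\mathcal{S}\}$, read over $\mathbf{B}^{[k]}$, satisfies the $(m-k)$-spread condition (\ref{7}); this is immediate from the $m$-spread condition on $\mathcal{S}$, since reading off consecutive overlapping $k$-windows along any branch is consistent with the transition rule of $\mathcal{S}$. The induced $0$-block code $\mathbf{\Phi}^{[k]}:\mathbf{B}^{[k]}\to\mathcal{A}$ is defined by $\mathbf{\Phi}^{[k]}(\mathbf{c})=\Phi^{[k]}(\mathbf{\bar{c}})$, where $\mathbf{\bar{c}}$ is the $k$-pattern represented by $\mathbf{c}$, so that $\mathbf{\Phi}^{[k]}(\mathbf{c})=a$ holds exactly when $\Phi^{[k]}$ assigns $a$ to the corresponding window of $\tau_p$.

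Next I would establish the counting identity that drives everything, exactly as in the proof of Theorem \ref{Thm: 4}: applying $\Phi^{[k]}$ to the $k$-window rooted at a node $g$ of $\tau_p$ is the same as applying the $0$-block code $\mathbf{\Phi}^{[k]}$ to the single recoded symbol $\tau_{\mathbf{p}}(g)=\mathbf{c}$ sitting at $g$, whence
\[
O_a\bigl(\phi(\tau_p)|_{\Delta_{s_n}^{s_{n+1}}(F_{\phi(\tau_p)})}\bigr)=\sum_{\mathbf{c}:\ \mathbf{\Phi}^{[k]}(\mathbf{c})=a}O_{\mathbf{c}}\bigl(\tau_{\mathbf{p}}|_{\Delta_{s_n}^{s_{n+1}}(F_{\tau_{\mathbf{p}}})}\bigr).
\]
Dividing by $\left\vert\Delta_{s_n}^{s_{n+1}}\right\vert$, passing to the limit through the definition (\ref{1}) of the spread rate, and applying Theorem \ref{Thm: 3} to the projected spread model $(\mathbf{S}^{[m-k]},\mathbf{\Phi}^{[k]})$ over $\mathbf{B}^{[k]}$ yields the asserted identity
\[
s_p(a,\mathcal{S},\Phi^{[k]},\{k_n\}_{n=1}^{\infty})=\sum_{\mathbf{c}:\ \mathbf{\Phi}^{[k]}(\mathbf{c})=a}s_{\mathbf{p}}(\mathbf{c},\mathbf{S}^{[m-k]},\mathbf{\Phi}^{[k]},\{k_n\}_{n=1}^{\infty}).
\]

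The main obstacle I anticipate is bookkeeping the mismatch between the two tree coordinatizations: a single level in the recoded tree over $\mathbf{B}^{[k]}$ encodes a $k$-deep window of $\mathcal{B}$-symbols, so the windows $\Delta_{s_n}^{s_{n+1}}$ in $\tau_p$ and in $\tau_{\mathbf{p}}$ do not align on the nose but differ by a boundary layer of thickness $k-1$ (this is the origin of the shifted range $\Delta_{s_n}^{s_{n+1}+k-1}$ encountered in the proof of Theorem \ref{Thm: 4}). The crux is to argue that this discrepancy is asymptotically negligible: under either hypothesis $k_n=k$ or $k_n\to\infty$, the Perron--Frobenius growth of $M^n$ makes the number of nodes in such a boundary layer of lower order than $\left\vert\Delta_{s_n}^{s_{n+1}}\right\vert$, so Lemma \ref{Lma: 2} permits replacing the shifted numerators and denominators without altering the limit. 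Once this is secured, the evaluation of the right-hand side is a direct citation of Theorem \ref{Thm: 3}, completing the proof.
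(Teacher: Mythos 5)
Your proposal is correct and takes essentially the same route as the paper, whose own argument for this case is exactly the reduction you give: recode over the alphabet $\mathbf{B}^{[k]}$ of $k$-patterns appearing in $\mathcal{S}$, regard $\Phi^{[k]}$ as the induced $0$-block code $\mathbf{\Phi}^{[k]}$, and apply Theorem \ref{Thm: 3} to the projected spread model $(\mathbf{S}^{[m-k]},\mathbf{\Phi}^{[k]})$ over $\mathbf{B}^{[k]}$. Your counting identity and the Lemma \ref{Lma: 2} handling of the depth-$(k-1)$ boundary layer simply make explicit the bookkeeping the paper leaves implicit (mirroring the proof of Theorem \ref{Thm: 4}).
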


\section{Random projected spread models}\label{random models}

In this section, we will introduce the projected random spread model using the branching processes. First of all, we consider a population that starts with one individual and consists of individuals of $K$ different types, say $b_1, b_2, \cdots, b_K$ and the set $\mathcal{B}=\{b_1, b_2, \cdots, b_K\}$ is called the type set. Let 
$$
\begin{array}{c}
\mathbf{Z}_n=(Z_{n,1}, Z_{n,2}, \cdots,Z_{n,K})
\end{array}
$$
be the population vector in the $n$th generation, where $Z_{n,i}$ is the number of individuals of type $b_i$ in the $n$th generation, $i=1,2,\cdots, K$. Assume that each individual in the population lives for a unit of time and, upon their death, produces their offspring independent of others in the same generation and in the past of the population. Assume that the production mechanism of each individual follows the probability distribution $\{p^{(b_i)}(\cdot)\}_{i=1}^{K}$, where $p^{(b_i)}(j_1, j_2, \cdots, j_K)$ is the probability that an individual of type $b_i$ produces $j_1$ children of type $b_1$, $j_2$ children of type $b_2$, $\cdots$, and $j_K$ children of type $b_K$. Then the process $\{\mathbf{Z}_n\}_{n\geq 0}$ is called a \emph{$K$-type branching process with offspring distribution} $\{p^{(b_i)}(\cdot)\}_{i=1}^{K}$. When we use such a process to model the spread of certain objects, we also call $\{\mathbf{Z}_n\}_{n\geq 0}$ a \emph{random $1$-spread model with spread distribution $\{p^{(b_i)}(\cdot)\}_{i=1}^{K}$}. A branching process in which each individual only has exactly one offspring is called a singular branching process. To avoid any trivial cases, throughout this paper, we assume the non-singularity for all the random spread models we work with. We refer readers to \cite{ban2021mathematical} for the settings of the random $1$-spread models using branching processes. 

In a random $1$-spread model, we are interested in what happens to the type composition of the population in the long run. We define the spread rate of type $b_i$ when the spread is initiated by an individual of type $b_{i_0}$, $i=1,2,\cdots, K$ to be the limit of the proportion within the population as the following:
$$
\begin{array}{c}
s_{b_{i_0}}(b_j;\{\mathbf{Z}_n\}_{n\geq 0})\equiv \displaystyle{\lim_{n\rightarrow \infty} \frac{Z_{n,j}}{\sum\limits_{j=1}^KZ_{n,j}}}.
\end{array}
$$
Note that in a random $1$-spread model, the spread rate is also a random quantity. In the classical theory of branching processes, it is known that the behavior of the offspring matrix provides the information for the investigation of the branching process in the long run. So, the results about the $\xi$-matrices in the topological spread models would give some insights when we are dealing with the random spread models. Throughout this paper, we assume that the underlying probability space is $(\Omega, \mathcal{F}, P)$ and more details about the theory of branching processes can be found in \cite{Athreya2004}.

Let $m_{ij}=E(Z_{1,j}|\mathbf{Z}_0=\mathbf{1}_{b_i})$ be the expected value of the number of children of type $b_j$ produced by an individual of type $b_i$, where $\mathbf{1}_{b_i}$ is the unit vector with $1$ as its $i$th component. Then the matrix 
$$
M\equiv (m_{ij})=\left(
\begin{array}{cccc}
m_{11}& m_{12}&\cdots& m_{1K}\\
m_{21}& m_{22}&\cdots& m_{2K}\\
\vdots &\vdots &&\vdots \\
m_{K1}& m_{K2}&\cdots& m_{KK}
\end{array}
\right)$$
is called the \emph{offspring mean matrix} for this branching process $\{\mathbf{Z}_n\}_{n\geq 0}$. $M$ is also called the \emph{spread mean matrix} when $\{\mathbf{Z}_n\}_{n\geq 0}$ is considered as a random spread model.  Moreover, if $M^2=M\cdot M$ and $M^n=M^{n-1}\cdot M$ for all $n\geq 3$, then the $(i,j)$-entry $m^{(n)}_{ij}$ of the matrix $M^n$ is the expected value
$$
\begin{array}{c}
m^{(n)}_{ij}=E(Z_{n,j}|\mathbf{Z}_0=\mathbf{1}_{b_i})
\end{array}
$$
of the number of offspring of type $b_j$ in the $n$th generation of the population initiated by an ancestor of type $b_i$.

Let $\rho$ be the maximal eigenvalue of the offspring mean matrix $M$ of the branching process $\{\mathbf{Z}_n\}_{n\geq 0}$. We say $w=(w_1, w_2, \cdots, w_K)$ is the normalized left eigenvector of $M$ associated with $\rho$, if $wM=\rho w$ and $w_1+w_2+\cdots+w_K=1$.

The following theorem tells us that the spread rates are related to the components of this left eigenvector $w$.

\begin{proposition}[Spread rate for random $1$-spread model, \cite{ban2021mathematical}, \cite{ban2023randomspread}]\label{prop_randpm 1-spread model} Let $\{\mathbf{Z}_n\}_{n\geq 0}$ be a random $1$-spread model with the type set $\mathcal{B}=\{b_i\}_{i=1}^K$. Suppose that  $E(Z_{1,j}\log Z_{1,j}|\mathbf{Z}_0=\mathbf{1}_{b_i})<\infty$ for all $i,j=1,2,\cdots,K$ and that $\rho>1$ is the maximal eigenvalue of the spread mean matrix $M$ of $\{\mathbf{Z}_n\}_{n\geq 0}$ with positive normalized left eigenvector $w=(w_1, \cdots, w_K)$. Then
\begin{enumerate}
  \item [(i)] there exists a random variable $W$ such that 
  $$
  \begin{array}{c}
  \displaystyle{\lim_{n\rightarrow \infty}\frac{\mathbf{Z}_n}{\rho^n}}= w W \hspace{1cm} \textrm{ a.s.; and }
  \end{array}
  $$
  \item [(ii)] for any $b_{i_0}\in \mathcal{B}$, on the event $E_{i_0}$ of non-extinction given that $\mathbf{Z}_0=\mathbf{1}_{b_{i_0}}$, the spread rate is
$$
\begin{array}{c}
s_{b_{i_0}}(b_j;\{\mathbf{Z}_n\}_{n\geq 0})\equiv \displaystyle{\lim_{n\rightarrow\infty} \frac{Z_{n,j}}{\sum\limits_{j=1}^KZ_{n,j}}}=w_j \hspace{1cm} \textrm{ a.s., }
\end{array}
$$
which is independent of $b_{i_0}$.
\end{enumerate}
\end{proposition}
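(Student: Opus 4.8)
The plan is to obtain both parts from the multitype Kesten--Stigum theorem (see \cite{Athreya2004}) combined with an elementary ratio computation. First I would fix the eigenvectors: since the process is non-singular and the mean matrix $M$ is positively regular, Perron--Frobenius yields the positive maximal eigenvalue $\rho>1$ together with the positive left eigenvector $w$ normalized by $\sum_{j=1}^K w_j=1$; let $u=(u_1,\ldots,u_K)^{t}$ be the associated right eigenvector, $Mu=\rho u$, normalized so that $wu=\sum_{j=1}^K w_j u_j=1$. Writing $\mathcal{F}_n=\sigma(\mathbf{Z}_0,\ldots,\mathbf{Z}_n)$ for the natural filtration, the branching property gives $E[\mathbf{Z}_{n+1}\mid\mathcal{F}_n]=\mathbf{Z}_n M$, so the scalar process
\[
W_n=\rho^{-n}\,\mathbf{Z}_n u=\rho^{-n}\sum_{j=1}^K Z_{n,j}u_j
\]
satisfies $E[W_{n+1}\mid\mathcal{F}_n]=\rho^{-(n+1)}\mathbf{Z}_n M u=W_n$. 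As $u>0$ and $\mathbf{Z}_n\geq 0$, $\{W_n\}$ is a nonnegative martingale and hence converges almost surely to some $W\geq 0$.

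The substantive input is the hypothesis $E(Z_{1,j}\log Z_{1,j}\mid \mathbf{Z}_0=\mathbf{1}_{b_i})<\infty$, which is precisely the $x\log x$ condition of Kesten--Stigum. Under it I would invoke that theorem to upgrade the almost sure convergence to $L^1$ convergence of $W_n$ to $W$; this makes $W$ non-degenerate with $E[W\mid\mathbf{Z}_0=\mathbf{1}_{b_{i_0}}]=u_{i_0}>0$ and, crucially, identifies $\{W=0\}$ with the extinction event up to a $P$-null set. The same theorem delivers the full vector convergence $\rho^{-n}\mathbf{Z}_n\to Ww$ almost surely, which is assertion (i); the scalar factor is indeed $W$ because pairing the limit with $u$ and using $wu=1$ returns $\lim_n W_n=W$, while positive regularity forces the limiting direction to be the stable type vector $w$. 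In particular the ancestral type $b_{i_0}$ influences the limit only through $W$, not through the direction $w$.

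For part (ii) I would argue directly from (i). On the non-extinction event $E_{i_0}$ we have $W>0$ almost surely by the identification $\{W=0\}=\{\text{extinction}\}$ above, so dividing numerator and denominator by $\rho^n$ and letting $n\to\infty$ gives
\[
\frac{Z_{n,j}}{\sum_{j=1}^K Z_{n,j}}
=\frac{\rho^{-n}Z_{n,j}}{\rho^{-n}\sum_{j=1}^K Z_{n,j}}
\longrightarrow\frac{W w_j}{W\sum_{j=1}^K w_j}=w_j
\quad\text{a.s.},
\]
where the last equality uses the normalization $\sum_{j=1}^K w_j=1$. Because the right-hand side $w_j$ is independent of $b_{i_0}$, the spread rate does not depend on the initiating type, completing (ii).

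The main obstacle is the Kesten--Stigum step rather than the algebra: establishing uniform integrability of $\{W_n\}$ (equivalently $L^1$ convergence) under the $x\log x$ condition, and matching $\{W=0\}$ with extinction almost surely, is the genuine analytic content and is what forces both the non-degeneracy of $W$ and the a.s.\ (rather than merely in-probability) convergence. In the present setting this machinery is supplied by the classical multitype theory in \cite{Athreya2004} and by the authors' earlier random-spread results \cite{ban2021mathematical,ban2023randomspread}, so the work that remains is the martingale set-up and the ratio computation sketched above.
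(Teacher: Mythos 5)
The paper offers no proof of this proposition: it is imported verbatim from \cite{ban2021mathematical} and \cite{ban2023randomspread}, with the underlying branching-process machinery deferred to \cite{Athreya2004}, and your martingale construction of $W_n=\rho^{-n}\mathbf{Z}_n u$ followed by the multitype Kesten--Stigum theorem (the $x\log x$ condition giving $L^1$ convergence, non-degeneracy of $W$, identification of $\{W=0\}$ with extinction, and the a.s.\ vector convergence $\rho^{-n}\mathbf{Z}_n\to wW$) is precisely the classical argument those references rest on, and your ratio computation for (ii) is correct. The one point to note is that you had to assume positive regularity of $M$ for Kesten--Stigum to apply --- a hypothesis the proposition statement leaves implicit alongside the paper's standing non-singularity assumption --- but making it explicit is a feature of your write-up, not a gap.
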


\begin{table}[H]
    \centering 
    \begin{tabular}{ccl}
        \hline
        $\mathbf{B}_{b_i}^{[k]}$& & collection of all potential $k$-patterns initiated with a root of\\
        &&hidden type $b_i\in \mathcal{B}$\\
        $\mathbf{B}^{[k]}$& & collection of all potential $k$-patterns\\
        $\{\mathbf{Z}_n\}_{n\geq 0}$& &  random $1$-spread model \\
        $\{\mathbf{Z}^{\Phi}_n\}_{n\geq 0}$& & the random projected spread model induced from $\{\mathbf{Z}_n\}_{n\geq 0}$\\
        &&and the $0$-block code $\Phi$\\
        $\mathcal{R}^{[k]}$& & the natural random $k$-spread model induced from $\{\mathbf{Z}_n\}_{n\geq 0}$\\
         $\{\mathbf{Z}^{[k]}_n\}_{n\geq 0}$& & the random $1$-spread model induced from $\{\mathbf{Z}_n\}_{n\geq 0}$ via the\\
        &&  natural random $k$-spread model $\mathcal{R}^{[k]}$\\
         $\{\mathbf{Z}^{[k],\Phi}_n\}_{n\geq 0}$& & the random projected spread model induced from $\{\mathbf{Z}_n\}_{n\geq 0}$\\
        &&and the $k$-block code $\Phi^{[k]}$\\
        \hline
    \end{tabular}
    \caption{Frequently used notation for random models}
    \label{tab:symbol integration for random}
\end{table}

\subsection{Random $1$-spread model and $0$-black code}\label{r_1s0b model}

Now, we consider a $0$-block code $\Phi: \mathcal{B}\rightarrow \mathcal{A}$ as defined in Section \ref{sec: top}, where $\mathcal{B}=\{b_1, b_2,\cdots,b_{K}\}$ is the hidden type set and  $\mathcal{A}=\{a_1, a_2,\cdots,a_{K'}\}$ is the explicit type set.

Now, for each $n=0, 1, 2, \cdots$, we let the random vector 
$$
\begin{array}{c}
\mathbf{Z}^{\Phi}_n=(Z^{\Phi}_{n,1}, Z^{\Phi}_{n,2}, \cdots,Z^{\Phi}_{n,K'})
\end{array}
$$
be the population vector in the $n$th generation categorized by type set $\mathcal{A}$. That is, $Z^{\Phi}_{n,j}$ is the number of individuals of explicit type $a_j$ (no matter what hidden types they are of) in the $n$th generation, $j=1,2,\cdots, K'$. Then, for each $j=1,2,\cdots, K'$, we have that
$$
\begin{array}{c}
Z^{\Phi}_{n,j}=\displaystyle{\sum_{i: \Phi(b_i)=a_j}Z_{n,i}}.
\end{array}
$$
where $n=0, 1, 2, \cdots$. Then 
the process $\{\mathbf{Z}^{\Phi}_n\}_{n\geq 0}$ is called \emph{ the random projected spread model induced from the random $1$-spread model $\{\mathbf{Z}_n\}_{n\geq 0}$ and the $0$-block code $\Phi: \mathcal{B}\rightarrow \mathcal{A}$}. In this random projected spread model initiated by an individual of hidden type $b_{i}$, we define the spread rate of the explicit type $a_j$ to be the following random quantity:
$$
\begin{array}{c}
s_{b_{i}}(a_j;\{\mathbf{Z}_n\}_{n\geq 0},\Phi)=\displaystyle{\lim_{n\rightarrow \infty}\frac{Z^{\Phi}_{n,j}}{\sum\limits_{j=1}^{K'} Z^{\Phi}_{n,j}}}
\end{array}
 $$
where $i=1,2,\cdots,K$ and  $j=1,2,\cdots,K'$. More generally, if given a sequence $
\{k_{n}\}_{n=1}^{\infty }\subseteq \mathbb{N}$ and $ s_{n}=\sum_{i=1}^{n}k_{i}$, we define 
$$
s_{b_i}(a_j;\{\mathbf{Z}_n\}_{n\geq 0},\Phi,\{k_{n}\}_{n=1}^{\infty })=\displaystyle{\lim_{n\rightarrow \infty }
\frac{\sum\limits_{r=s_n+1}^{s_{n+1}}Z^{\Phi}_{r,j}}{\sum\limits_{r=s_n+1}^{s_{n+1}}\sum\limits_{j=1}^{K'}Z^{\Phi}_{r,j}}},
$$
which, if it exists in some sense, gives the information of the limit proportion of individuals of explicit type $a_j$ within generations between the $(s_n+1)$th generation and the $s_{n+1}$th generation as $n\rightarrow \infty$.  Note that, if $k_n=1$ for all $n$, then 
$$
\begin{array}{c}
s_{b_i}(a_j;\{\mathbf{Z}_n\}_{n\geq 0},\Phi,\{k_{n}\}_{n=1}^{\infty })=s_{b_i}(a_j;\{\mathbf{Z}_n\}_{n\geq 0},\Phi).
\end{array}
$$

\begin{theorem}\label{thm_random 1s0b}
Let $\{\mathbf{Z}_n\}_{n\geq 0}$ be a random $1$-spread model with the type set $\mathcal{B}=\{b_i\}_{i=1}^K$, $\mathcal{A}=\{a_i\}_{i=1}^{K'}$ be another type set and $\Phi:\mathcal{B}\rightarrow\mathcal{A}$ be a $0$-block code. Suppose that  $E(Z_{1,j}\log Z_{1,j}|\mathbf{Z}_0=\mathbf{1}_{b_i})<\infty$ for all $i,j=1,2,\cdots,K$ and suppose that $\rho>1$ is the maximal eigenvalue of the spread mean matrix $M$ of $\{\mathbf{Z}_n\}_{n\geq 0}$ with positive normalized left eigenvector $w=(w_1, \cdots, w_K)$ and that $
\{k_{n}\}_{n=1}^{\infty }\subseteq \mathbb{N}$ is a sequence such that $k_n=k$ for all $n$ or $k_n\rightarrow \infty$ as $n\rightarrow \infty$. Then
\begin{enumerate}
  \item [(i)] let the random variable $W$ be as defined in Proposition \ref{prop_randpm 1-spread model}, then there exists a vector $u$ such that
  $$
  \begin{array}{c}
  \displaystyle{\lim_{n\rightarrow \infty}\frac{\mathbf{Z}^{\Phi}_n}{\rho^n}}= uW \hspace{1cm} \textrm{ a.s.; and }
  \end{array}
  $$
  \item [(ii)] for any $b_{i_0}\in \mathcal{B}$, on the event  of non-extinction given that $\mathbf{Z}_0=\mathbf{1}_{b_{i_0}}$, the spread rate is
$$
\begin{array}{c}
s_{b_{i_0}}(a_j;\{\mathbf{Z}_n\}_{n\geq 0},\Phi,\{k_{n}\}_{n=1}^{\infty })=\displaystyle{\sum_{i: \Phi(b_i)=a_j}w_i}\hspace{1cm} \textrm{ a.s. }
\end{array}
$$
and is independent of $b_{i_0}$.
\end{enumerate}
\end{theorem}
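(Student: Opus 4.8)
The plan is to leverage Proposition \ref{prop_randpm 1-spread model} directly, since the projected population $\mathbf{Z}^{\Phi}_n$ is obtained from $\mathbf{Z}_n$ by a fixed linear aggregation of coordinates. For part (i), I would observe that the defining relation $Z^{\Phi}_{n,j}=\sum_{i:\Phi(b_i)=a_j}Z_{n,i}$ says precisely that $\mathbf{Z}^{\Phi}_n=\mathbf{Z}_n\Pi$, where $\Pi$ is the fixed $K\times K'$ matrix with $\Pi_{ij}=1$ when $\Phi(b_i)=a_j$ and $0$ otherwise. Since Proposition \ref{prop_randpm 1-spread model}(i) gives $\mathbf{Z}_n/\rho^n\to wW$ a.s. and $\Pi$ is a fixed continuous linear map, applying $\Pi$ on the right of both sides yields $\mathbf{Z}^{\Phi}_n/\rho^n\to (w\Pi)W$ a.s. Setting $u:=w\Pi$, that is $u_j=\sum_{i:\Phi(b_i)=a_j}w_i$, establishes (i) with this explicit $u$.

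For part (ii), the first fact I would record is that $\sum_{j=1}^{K'}u_j=\sum_{j=1}^{K'}\sum_{i:\Phi(b_i)=a_j}w_i=\sum_{i=1}^{K}w_i=1$, because $\Phi$ sends each hidden type to exactly one explicit type and $w$ is normalized. Next I would fix the non-extinction event $E_{i_0}$ from Proposition \ref{prop_randpm 1-spread model}, on which the Kesten--Stigum limit satisfies $W>0$ a.s. (the same positivity already used implicitly in Proposition \ref{prop_randpm 1-spread model}(ii)) and on which the total $T_r:=\sum_{j'}Z^{\Phi}_{r,j'}=\sum_i Z_{r,i}\ge 1$ for every $r$. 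For the single-generation proportion one simply divides the limits from part (i):
\[
\frac{Z^{\Phi}_{n,j}}{\sum_{j'=1}^{K'}Z^{\Phi}_{n,j'}}=\frac{Z^{\Phi}_{n,j}/\rho^n}{\sum_{j'}Z^{\Phi}_{n,j'}/\rho^n}\xrightarrow{\ n\to\infty\ }\frac{u_jW}{(\sum_{j'}u_{j'})W}=u_j\quad\text{a.s. on }E_{i_0},
\]
using $\sum_{j'}u_{j'}=1$ and $W>0$.

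To pass to a general admissible sequence $\{k_n\}$, I would write $p_r:=Z^{\Phi}_{r,j}/T_r$, so that the quantity defining the spread rate is the weighted average
\[
\frac{\sum_{r=s_n+1}^{s_{n+1}}Z^{\Phi}_{r,j}}{\sum_{r=s_n+1}^{s_{n+1}}T_r}=\frac{\sum_{r=s_n+1}^{s_{n+1}}p_rT_r}{\sum_{r=s_n+1}^{s_{n+1}}T_r},
\]
a convex combination of $\{p_r:s_n<r\le s_{n+1}\}$ with strictly positive weights $T_r$. Hence this average lies between $\min_r p_r$ and $\max_r p_r$ over the window $(s_n,s_{n+1}]$. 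Since $s_n=\sum_{i=1}^n k_i\ge n\to\infty$, the left endpoint of the window tends to infinity with $n$, so both the minimum and the maximum of $p_r$ over the window converge to $u_j$ by the single-generation limit above; a squeeze then forces the weighted average to $u_j=\sum_{i:\Phi(b_i)=a_j}w_i$. The same squeeze covers the constant case $k_n=k$ (finitely many terms) and the case $k_n\to\infty$ uniformly, so no separate Toeplitz-type estimate is needed.

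The main obstacle I anticipate is the bookkeeping around almost-sure null sets: the limits in part (i) and the positivity $W>0$ on non-extinction each hold off a null set, and one must intersect the finitely many such sets (over the indices $j$ and $j'$) so that the squeeze runs on a single full-probability event inside $E_{i_0}$. The independence of the spread rate from the initial type $b_{i_0}$ is then inherited directly from the corresponding statement for $w$ in Proposition \ref{prop_randpm 1-spread model}(ii), since $u=w\Pi$ carries no dependence on $b_{i_0}$.
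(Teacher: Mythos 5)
Your proposal is correct, and part (i) is essentially the paper's argument: the paper sums the coordinate limits $\lim_n Z_{n,i}/\rho^n = w_iW$ over $\{i:\Phi(b_i)=a_j\}$ where you multiply by the aggregation matrix $\Pi$, a purely cosmetic difference. Part (ii), however, genuinely diverges in two places. First, for the single-generation proportion the paper quotes Proposition \ref{prop_randpm 1-spread model}(ii) directly (intersecting the finitely many events $D_{i_0,i}$ on which $Z_{n,i}/\sum_j Z_{n,j}\to w_i$), whereas you divide the two limits from part (i); this requires $W>0$ a.s.\ on the non-extinction event, i.e.\ the Kesten--Stigum dichotomy, which is guaranteed by the $Z\log Z$ hypothesis but is not explicitly stated in the Proposition --- your parenthetical that this positivity is already implicit in Proposition \ref{prop_randpm 1-spread model}(ii) is fair, though citing (ii) directly, as the paper does, sidesteps the point entirely. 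Second, for the windowed rate the paper invokes Lemma \ref{Lma: 2} (its Stolz-type ratio lemma), treating the cases $k_n=k$ and $k_n\to\infty$ via that lemma's two clauses, while you observe that the windowed ratio is a convex combination of the single-generation proportions $p_r$ with weights $T_r\geq 1$ (positive on non-extinction) and squeeze it between $\min$ and $\max$ of $p_r$ over the window $(s_n,s_{n+1}]$. Since only $s_n\geq n\to\infty$ is used, your squeeze actually proves the conclusion for an \emph{arbitrary} sequence $\{k_n\}\subseteq\mathbb{N}$, strictly more general than the theorem's stated dichotomy; what the paper's route buys in exchange is reuse of a lemma already established and employed throughout its topological sections, keeping the random and topological proofs parallel. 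Your handling of the null-set bookkeeping and of the independence from $b_{i_0}$ (via $u=w\Pi$ not depending on the initial type) matches the paper's.
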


\begin{proof}
\begin{enumerate}
\item [(i)] Since 
$$
\begin{array}{c}
Z^{\Phi}_{n,j}=\displaystyle{\sum_{i: \Phi(b_i)=a_j}Z_{n,i}}
\end{array}
$$
and by Proposition \ref{prop_randpm 1-spread model} (i), we have that
 $$
  \begin{array}{rl}
  \displaystyle{\lim_{n\rightarrow \infty}\frac{Z^{\Phi}_{n,j}}{\rho^n}}&=\displaystyle{\lim_{n\rightarrow \infty}\frac{\sum\limits_{i: \Phi(b_i)=a_j}Z_{n,i}}{\rho^n}=\sum_{i: \Phi(b_i)=a_j}\lim_{n\rightarrow \infty}\frac{Z_{n,i}}{\rho^n}}\\
  &= \displaystyle{\sum_{i: \Phi(b_i)=a_j} w_i W} 
  \end{array}
  $$
almost surely. For each $j=1, 2, \cdots, K'$, let
$$
\begin{array}{c}
u_j=\sum\limits_{i: \Phi(b_i)=a_j} w_i
\end{array}
$$
and let $u=(u_1, u_2, \cdots, u_{K'})$, then for every $j=1, 2, \cdots, K'$, we have 
  $$
  \begin{array}{c}
  \displaystyle{\lim_{n\rightarrow \infty}\frac{Z^{\Phi}_{n,j}}{\rho^n}}= u_jW \hspace{1cm} \textrm{ a.s. }
  \end{array}
  $$
and hence
  $$
  \begin{array}{c}
  \displaystyle{\lim_{n\rightarrow \infty}\frac{\mathbf{Z}^{\Phi}_n}{\rho^n}}= uW \hspace{1cm} \textrm{ a.s. }
  \end{array}
  $$

\item [(ii)] First of all, let $E_{i_0}$ be the event of non-extinction given that $\mathbf{Z}_0=\mathbf{1}_{b_{i_0}}$ and, by Proposition \ref{prop_randpm 1-spread model} (ii), we have that
$$
\begin{array}{c}
 \displaystyle{\lim_{n\rightarrow\infty} \frac{Z_{n,i}}{\sum\limits_{j=1}^KZ_{n,j}}}=w_i \hspace{1cm} \textrm{ a.s. on $E_{i_0}$}
\end{array}
$$
for any $i=1,2,\cdots, K$. Now, for each $i$, let
$$
\begin{array}{c}
D_{i_o,i}=\bigg\{\omega \in E_{i_0}: \displaystyle{\lim_{n\rightarrow\infty} \frac{Z_{n,i}(\omega)}{\sum\limits_{j=1}^KZ_{n,j}(\omega)}}=w_i \bigg\}
\end{array}
$$
then we have that the probability of event $E_{i_0}\setminus D_{i_0,i}$ happening is zero and hence $E_{i_0}\setminus \cap_{i=1}^{K} D_{i_0,i}$ is also an event with probability zero. Also, for each $\omega \in \cap_{i=1}^{K} D_{i_0,i}$, we have that
$$
\begin{array}{c}
\displaystyle{\lim_{n\rightarrow\infty} \frac{Z_{n,i}(\omega)}{\sum\limits_{j=1}^KZ_{n,j}(\omega)}}=w_i \hspace{0.5cm} \textrm{ and }  \hspace{0.5cm}\displaystyle{\lim_{n\rightarrow \infty}\sum\limits_{j=1}^KZ_{n,j}(\omega)= \infty}
\end{array}
$$
for all $i=1,2,\cdots,K$. Hence, for every $\omega \in \cap_{i=1}^{K} D_{i_0,i}$
$$
\begin{array}{rl}
&s_{b_{i_0}}(a_j;\{\mathbf{Z}_n\}_{n\geq 0},\Phi)(\omega)\\
=&\displaystyle{\lim_{n\rightarrow \infty}\frac{Z^{\Phi}_{n,j}(\omega)}{\sum\limits_{j=1}^{K'} Z^{\Phi}_{n,j}(\omega)}=\lim_{n\rightarrow \infty}\frac{\sum\limits_{i: \Phi(b_i)=a_j}Z_{n,i}(\omega)}{\sum\limits_{j=1}^{K} Z_{n,j}(\omega)}}\\
=&\displaystyle{\lim_{n\rightarrow \infty}\sum\limits_{i: \Phi(b_i)=a_j}\frac{Z_{n,i}(\omega)}{\sum\limits_{j=1}^{K} Z_{n,j}(\omega)}=\sum\limits_{i: \Phi(b_i)=a_j}\lim_{n\rightarrow \infty}\frac{Z_{n,i}(\omega)}{\sum\limits_{j=1}^{K} Z_{n,j}(\omega)}}\\
=&\sum\limits_{i: \Phi(b_i)=a_j} w_i 
\end{array}
 $$
and therefore
$$
\begin{array}{c}
s_{b_{i_0}}(a_j;\{\mathbf{Z}_n\}_{n\geq 0},\Phi)=\displaystyle{\sum_{i: \Phi(b_i)=a_j}w_i}\hspace{1cm} \textrm{ a.s. on $E_{i_0}$}.
\end{array}
$$
 On the other hand, the assumption that the sequence $\{k_n\}$ increases to infinity gives that $k_{n+1}=s_{n+1}-s_n \rightarrow \infty$ as $n\rightarrow\infty$. So, it follows from the above results and Lemma \ref{Lma: 2} that
$$
\begin{array}{rl}
&s_{b_{i_0}}(a_j;\{\mathbf{Z}_n\}_{n\geq 0},\Phi,\{k_{n}\}_{n=1}^{\infty })=\displaystyle{\lim_{n\rightarrow \infty }
\frac{\sum\limits_{r=s_n+1}^{s_{n+1}}Z^{\Phi}_{r,j}}{\sum\limits_{r=s_n+1}^{s_{n+1}}\sum\limits_{j=1}^{K'}Z^{\Phi}_{r,j}}}\\
&=\displaystyle{\lim_{n\rightarrow \infty}\frac{Z^{\Phi}_{n,j}}{\sum\limits_{j=1}^{K'} Z^{\Phi}_{n,j}}=\sum\limits_{i: \Phi(b_i)=a_j} w_i}
\end{array}
$$
a.s. on $E_{i_0}$ and it also shows that the limit is independent of the initial type $b_{i_0}$. 
\end{enumerate}
\end{proof}

\subsection{Random $1$-spread model and $k$-block code for $k\geq 1$}\label{r_1skb model}

Since, for a random spread model, each realization (possible spread history structure) can be visualized as a rooted tree in which each node is labelled by a type, we can not only encode each type, i.e. use a $0$-block code to project every hidden type to an explicit type, but also encode each $k$-level label sub-tree using a so-called $k$-block code, as introduced in Section \ref{sec: top}. So, in this section, we will generalize the idea about the spread models projected by $k$-block codes from topological cases to random cases. However, in order to avoid confusion and complex notations, we will give the setting for $k\geq 1$ but introduce the detailed construction process with an example in the case of $k=1$. Similar steps and discussions in the $k=1$ case can be adopted for the general cases with $k\geq 1$.

First of all, we consider a random $1$-spread model $\{\mathbf{Z}_n\}_{n\geq 0}$ with the type set $\mathcal{B}=\{b_1, b_2, \cdots, b_K\}$, spread distribution $\{p^{(b_i)}(\cdot)\}_{i=1}^{K}$ and spread mean matrix $M$. Let $\mathcal{P}_k$ be the collection of all $k$-patterns and $\mathcal{A}=\{a_1, \cdots, a_{K'}\}$ be another type set as introduced in Section \ref{sec: top}. Recall that a $k$-block code $\Phi^{[k]}: \mathcal{P}_k\rightarrow \mathcal{A} $ is a map from $\mathcal{P}_k$ to $\mathcal{A}$. A $k$-block code $\Phi^{[k]}$ is applied to realizations of the random spread model in which every $k$-pattern appearing in the realizations is projected to an explicit type in $\mathcal{A}$ according to $\Phi^{[k]}$. That means that the hidden type of each node in the realizations is replaced by the image of the $k$-pattern rooted at that node under $\Phi^{[k]}$. Therefore, each realization will be transformed into a new rooted tree in which the type of each node is an explicit type in $\mathcal{A}$. In this case, our problem of interest is what happens to the proportion of each explicit type in the population in the long run.

In order to address the spread rates of explicit types after the spread model is projected, we define the potential patterns as follows:  For any positive integer $k$, a potential $k$-pattern with root of type $b_i$ is a $k$-pattern which is initiated with the type $b_i$ and has a positive probability of occurring. A potential $k$-pattern can represent a possible family structure in a branching process or a possible spread result in a random spread model, initiated by one individual, within $k$ generations. Note that whether or not a $k$-pattern is a potential $k$-pattern depends on the spread distribution of the random $1$-spread model.

\begin{example}\label{ex: r1}
If we consider the following random 1-spread model with type set $\mathcal{B}=\{b_1,b_2\}$ and spread distribution $\{p^{(b_i)}(\cdot)\}_{i=1}^{2}$:
$$
\begin{array}{c}
p^{(b_1)}(1,1)=\frac{1}{3},\hspace{0.3cm} p^{(b_1)}(2,1)=\frac{2}{3},\hspace{0.3cm} p^{(b_2)}(1,0)=\frac{1}{2},\hspace{0.3cm} p^{(b_2)}(1,1)=\frac{1}{2}
\end{array}
$$
then this model has four potential $1$-patterns listed in Figure \ref{Fig: potential 1-patterns}.
\begin{figure}[H] %H为当前位置，!htb为忽略美学标准，htbp为浮动图形
	\centering %图片居中
	\includegraphics[width=1\textwidth]{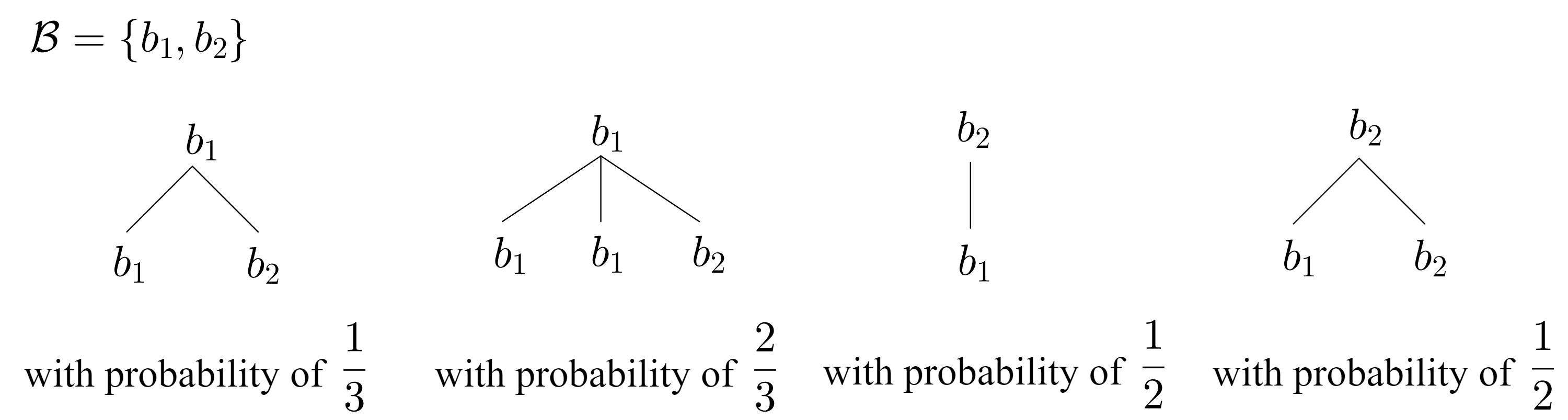} %插入图片，[]中设置图片大小，{}中是图片文件名
	\caption{The potential $1$-patterns} %最终文档中希望显示的图片标题
 \label{Fig: potential 1-patterns}
\end{figure}

In addition, the pattern in Figure \ref{Fig: potential 2-patterns} is a potential $2$-pattern for this spread model, since its probability of occurring is $\frac{1}{9}>0$, but the pattern in Figure \ref{Fig: non-potential 2-patterns} is not a potential $2$-pattern, for its probability of occurring is zero due to the fact that the event in which the sub-structure marked in the red square will happen is an impossible event when the spread follows the given spread distribution.

\begin{figure}[H] %H为当前位置，!htb为忽略美学标准，htbp为浮动图形
	\centering %图片居中
	\includegraphics[width=0.4\textwidth]{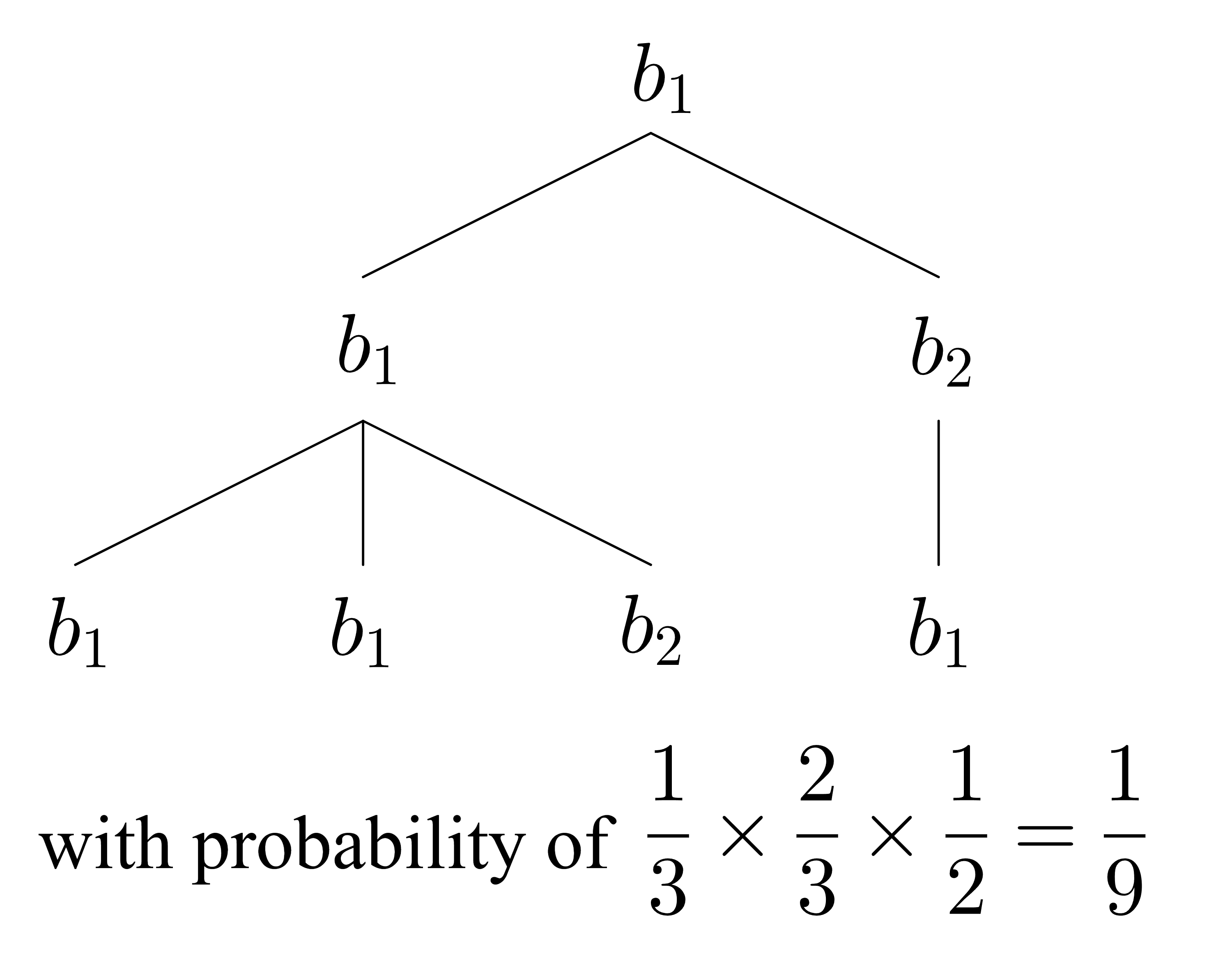} %插入图片，[]中设置图片大小，{}中是图片文件名
	\caption{A potential $2$-pattern} %最终文档中希望显示的图片标题
 \label{Fig: potential 2-patterns}
\end{figure}

\begin{figure}[H] %H为当前位置，!htb为忽略美学标准，htbp为浮动图形
	\centering %图片居中
	\includegraphics[width=0.4\textwidth]{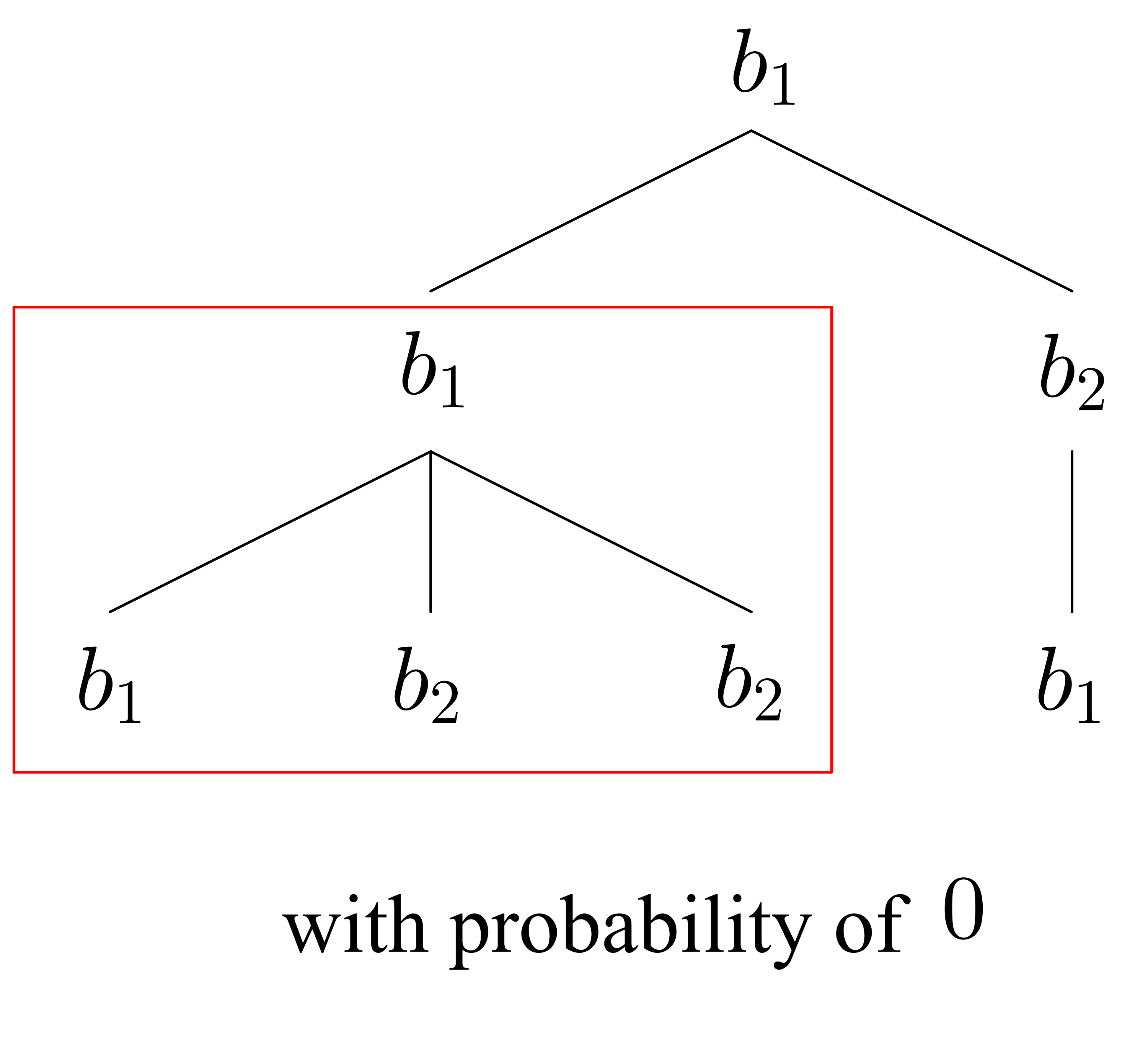} %插入图片，[]中设置图片大小，{}中是图片文件名
	\caption{Not a potential $2$-pattern } %最终文档中希望显示的图片标题
 \label{Fig: non-potential 2-patterns}
\end{figure}
\end{example}

Let $\mathbf{B}^{[k]}_{b_i}$ be the collection of all potential $k$-patterns initiated with a root of type $b_i$, $i=1,2,\cdots, K$. The assumption that the supports of the $k$-patterns are subsets of the conventional $d$-tree $\mathcal{T}_d$ (see the definition in Section \ref{sec: top}) ensures that each $\mathbf{B}^{[k]}_{b_i}$ only contains a finite number, say $n_i$, of potential $k$-patterns, and then there are exactly $K^{\#}=n_1+n_2+\cdots+n_K$ elements in the union
$$
\begin{array}{c}
\mathbf{B}^{[k]}\equiv\displaystyle{\bigcup_{i=1}^K \mathbf{B}^{[k]}_{b_i}}=\{\mathbf{b}_1, \mathbf{b}_2, \cdots, \mathbf{b}_{K^{\#}}\}.
\end{array}
$$

 By following Ban and et al. \cite{ban2023mathematical}, one can construct a random $k$-spread model $\mathcal{R}^{[k]}$ from the given random $1$-spread model $\{\mathbf{Z}_n\}_{n\geq 0}$ with type set $\mathcal{B}$ in a natural way following the spread distribution $\{p^{(b_i)}(\cdot)\}_{i=1}^{K}$. We refer the readers to \cite{ban2023mathematical} for more details about the $k$-spread model $\mathcal{R}^{[k]}$ and illustrations are provided in Figure \ref{Fig: relabeled} in Example \ref{ex: r2} as well as Figure \ref{Fig: 2b_induced pattern} in Section 4.2.3 in the current work. Furthermore, this natural random $k$-spread model $\mathcal{R}^{[k]}$ will induce another random $1$-spread model
$$
\begin{array}{c}
\mathbf{Z}^{[k]}_n=(Z^{[k]}_{n,1}, Z^{[k ]}_{n,2}, \cdots,Z^{[k]}_{n,K^{\#}})
\end{array}
$$
with type set $\mathbf{B}^{[k]}$. This random $1$-spread model $\{\mathbf{Z}^{[k]}_n\}_{n\geq 0}$ is an $K^{\#}$-type branching process and each of its types is a potential $k$-pattern of the original spread model $\{\mathbf{Z}_n\}_{n\geq 0}$ and we call $\{\mathbf{Z}^{[k]}_n\}_{n\geq 0}$ the random $1$-spread model induced from $\{\mathbf{Z}_n\}_{n\geq 0}$ via the natural random $k$-spread model $\mathcal{R}^{[k]}$. The corresponding initial distribution and spread distribution $\{p^{(\mathbf{b}_i)}(\cdot)\}_{i=1}^{K^{\#}}$ of  $\{\mathbf{Z}_n^{[k]}\}_{n\geq 0}$ are determined by the probabilities of occurring for the potential $k$-patterns according to the original spread distribution $\{p^{(b_i)}(\cdot)\}_{i=1}^{K}$ of $\{\mathbf{Z}_n\}_{n\geq 0}$. In addition, it is proved as a lemma in Ban and et al. \cite{ban2023mathematical} that
$$
\begin{array}{c}
|\mathbf{Z}^{[k]}_n|=Z^{[k]}_1+Z^{[k]}_2+\cdots+Z^{[k]}_{K^\#}\rightarrow \infty \hspace{1cm} \textrm{a.s.}
\end{array}
$$
That is, the population of this induced random spread model does not go extinct with probability $1$.

Moreover, it is clear that any $k$-block code $\Phi^{[k]}:\mathcal{P}_k\rightarrow \mathcal{A}$ respect to the original random $1$-spread model $\{\mathbf{Z}_n\}_{n\geq 0}$ with type set $\mathcal{B}$ can be considered as a $0$-block code $\mathbf{\Phi}^{[k]}:\mathbf{B}^{[k]}\rightarrow \mathcal{A}$ respect to the $1$-spread model $\{\mathbf{Z}^{[k]}_n\}_{n\geq 0}$ with type set $\mathbf{B}^{[k]}$. Here, we call $\mathbf{\Phi}^{[k]}$ \emph{ the associated $0$-block code of the $1$-block code $\Phi^{[k]}$}. Therefore, according to Section \ref{r_1s0b model}, this random $1$-spread model $\{\mathbf{Z}^{[k]}_n\}_{n\geq 0}$  together with the associated $0$-block code $\mathbf{\Phi}^{[k]}:\mathbf{B}^{[k]}\rightarrow \mathcal{A}$ induce a projected spread model denoted by
$$
\begin{array}{c}
\mathbf{Z}^{[k],\Phi}_n=(Z^{[k],\Phi}_{n,1}, Z^{[k],\Phi}_{n,2}, \cdots,Z^{[k],\Phi}_{n,K'})
\end{array}
$$
where $Z^{[k],\Phi}_{n,j}$ is the number of individuals of explicit type $a_j$ in the $n$th generation of $\{\mathbf{Z}_n^{[k],\Phi}\}_{n\geq 0}$, $j=1,2,\cdots, K'$ and we call $\{\mathbf{Z}^{[k],\Phi}_n\}_{n\geq 0}$ \emph{ the random projected spread model induced from the random $1$-spread model $\{\mathbf{Z}_n\}_{n\geq 0}$ and the $k$-block code $\Phi^{[k]}$ }.

Now, we will explain in more details with the following example how to induce a random $1$-spread model  $\{\mathbf{Z}_n^{[k]}\}_{n\geq 0}$ and then a random projected spread model  $\{\mathbf{Z}_n^{[k],\Phi}\}_{n\geq 0}$ from an original random $1$-spread model  $\{\mathbf{Z}_n\}_{n\geq 0}$ when $k=1$, and a similar procedure can be adopted for other cases when $k> 1$. 

\begin{example}\label{ex: r2}
We consider the same random $1$-spread model $\{\mathbf{Z}_n\}_{n\geq 0}$ in Example \ref{ex: r1} with type set $\mathcal{B}=\{b_1,b_2\}$ and spread distribution $\{p^{(b_i)}(\cdot)\}_{i=1}^{2}$, where
$$
\begin{array}{c}
p^{(b_1)}(1,1)=\frac{1}{3},\hspace{0.3cm} p^{(b_1)}(2,1)=\frac{2}{3},\hspace{0.3cm} p^{(b_2)}(1,0)=\frac{1}{2},\hspace{0.3cm} p^{(b_2)}(1,1)=\frac{1}{2}.
\end{array}
$$
In this case, all the potential $1$-patterns are listed and labeled as $\mathbf{b}_1, \mathbf{b}_2, \mathbf{b}_3$, and $\mathbf{b}_4$ in Figure \ref{Fig: label}. So, we have $\mathbf{B}^{[1]}=\{\mathbf{b}_1, \mathbf{b}_2, \mathbf{b}_3,\mathbf{b}_4\}$. Then, the induced random $1$-spread model $\{\mathbf{Z}^{[1]}_n\}_{n\geq 0}$ with type set $\mathbf{B}^{[1]}$ has the spread distribution  $\{p^{(\mathbf{b}_i)}(\cdot)\}_{i=1}^{4}$ where 
 $$
 \begin{array}{c}
p^{(\mathbf{b}_1)}(1,0,1,0)=\frac{1}{3}\times\frac{1}{2}=\frac{1}{6}
 \end{array}
 $$
 and
 $$
\begin{array}{lll}
 p^{(\mathbf{b}_1)}(1,0,0,1)=\frac{1}{6},&p^{(\mathbf{b}_1)}(0,1,1,0)=\frac{1}{3},&p^{(\mathbf{b}_1)}(0,1,0,1)=\frac{1}{3};\vspace{0.2cm}\\
p^{(\mathbf{b}_2)}(2,0,1,0)=\frac{1}{18},& p^{(\mathbf{b}_2)}(2,0,0,1)=\frac{1}{18},&p^{(\mathbf{b}_2)}(1,1,1,0)=\frac{2}{9},\vspace{0.2cm}\\
p^{(\mathbf{b}_2)}(1,1,0,1)=\frac{2}{9},&p^{(\mathbf{b}_2)}(0,2,1,0)=\frac{2}{9},&p^{(\mathbf{b}_2)}(0,2,0,1)=\frac{2}{9};\vspace{0.2cm}\\
p^{(\mathbf{b}_3)}(1,0,0,0)=\frac{1}{3},&p^{(\mathbf{b}_3)}(0,1,0,0)=\frac{2}{3};\vspace{0.2cm}&\\
p^{(\mathbf{b}_4)}(1,0,1,0)=\frac{1}{6},&p^{(\mathbf{b}_4)}(1,0,0,1)=\frac{1}{6},&p^{(\mathbf{b}_4)}(0,1,1,0)=\frac{1}{3},\vspace{0.2cm}\\
p^{(\mathbf{b}_4)}(0,1,1,0)=\frac{1}{3}.&&
\end{array}
$$
Moreover, the initial distribution of $\{\mathbf{Z}^{[1]}_n\}$ given $\mathbf{Z}_0=\mathbf{1}_{b_1}$ is the following:
$$
\begin{array}{c}
P(\mathbf{Z}^{[1]}=\mathbf{b}_1)=\frac{1}{3}, P(\mathbf{Z}^{[1]}=\mathbf{b}_2)=\frac{2}{3},P(\mathbf{Z}^{[1]}=\mathbf{b}_3)=P(\mathbf{Z}^{[1]}=\mathbf{b}_4)=0
\end{array}
$$
and the initial distribution of $\{\mathbf{Z}^{[1]}_n\}$ given $\mathbf{Z}_0=\mathbf{1}_{b_2}$ becomes:
$$
\begin{array}{c}
P(\mathbf{Z}^{[1]}=\mathbf{b}_1)=P(\mathbf{Z}^{[1]}=\mathbf{b}_2)=0, P(\mathbf{Z}^{[1]}=\mathbf{b}_3)=\frac{1}{2}, P(\mathbf{Z}^{[1]}=\mathbf{b}_4)=\frac{1}{2}.
\end{array}
$$

Every possible realization of the original random $1$-spread model $\{\mathbf{Z}_n\}_{n\geq 0}$ is associated with a possible realization of the induced random $1$-spread model $\{\mathbf{Z}^{[1]}_n\}_{n\geq 0}$. Figure \ref{Fig: partial realization} is the first four levels of a possible realization of $\{\mathbf{Z}_n\}_{n\geq 0}$ and, in Figure \ref{Fig: relabeled}, it gives an illustration of how the realization in Figure \ref{Fig: partial realization} is associated with a realization (the red tree) of $\{\mathbf{Z}^{[1]}_n\}_{n\geq 0}$.

\begin{figure}[H] %H为当前位置，!htb为忽略美学标准，htbp为浮动图形
	\centering %图片居中
	\includegraphics[width=0.8\textwidth]{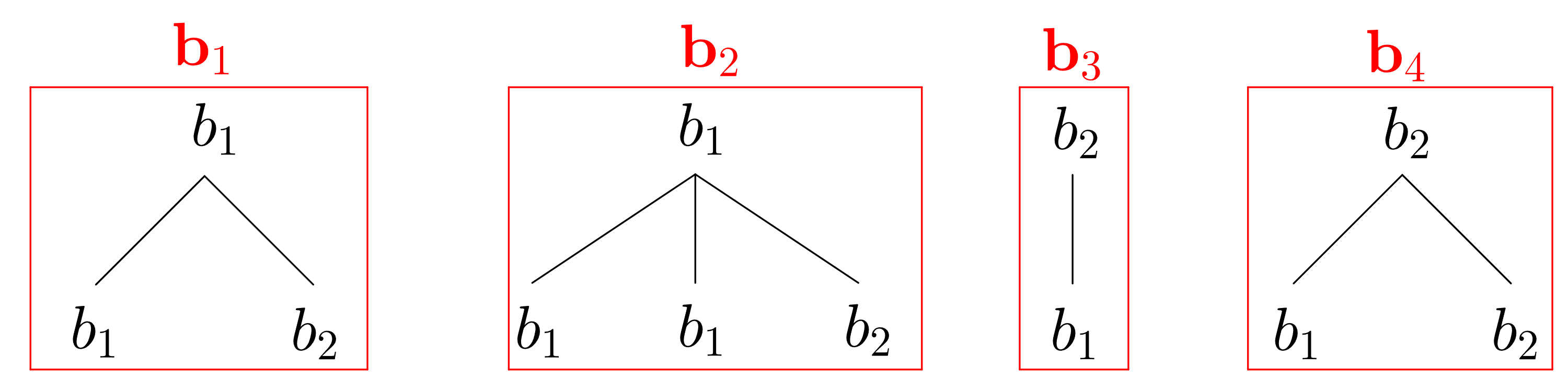} %插入图片，[]中设置图片大小，{}中是图片文件名
	\caption{$\mathbf{B}^{[1]}=\{\mathbf{b}_1, \mathbf{b}_2, \mathbf{b}_3,\mathbf{b}_4\}$ } %最终文档中希望显示的图片标题
  \label{Fig: label}
\end{figure}

\begin{figure}[H] %H为当前位置，!htb为忽略美学标准，htbp为浮动图形
	\centering %图片居中
	\includegraphics[width=0.5\textwidth]{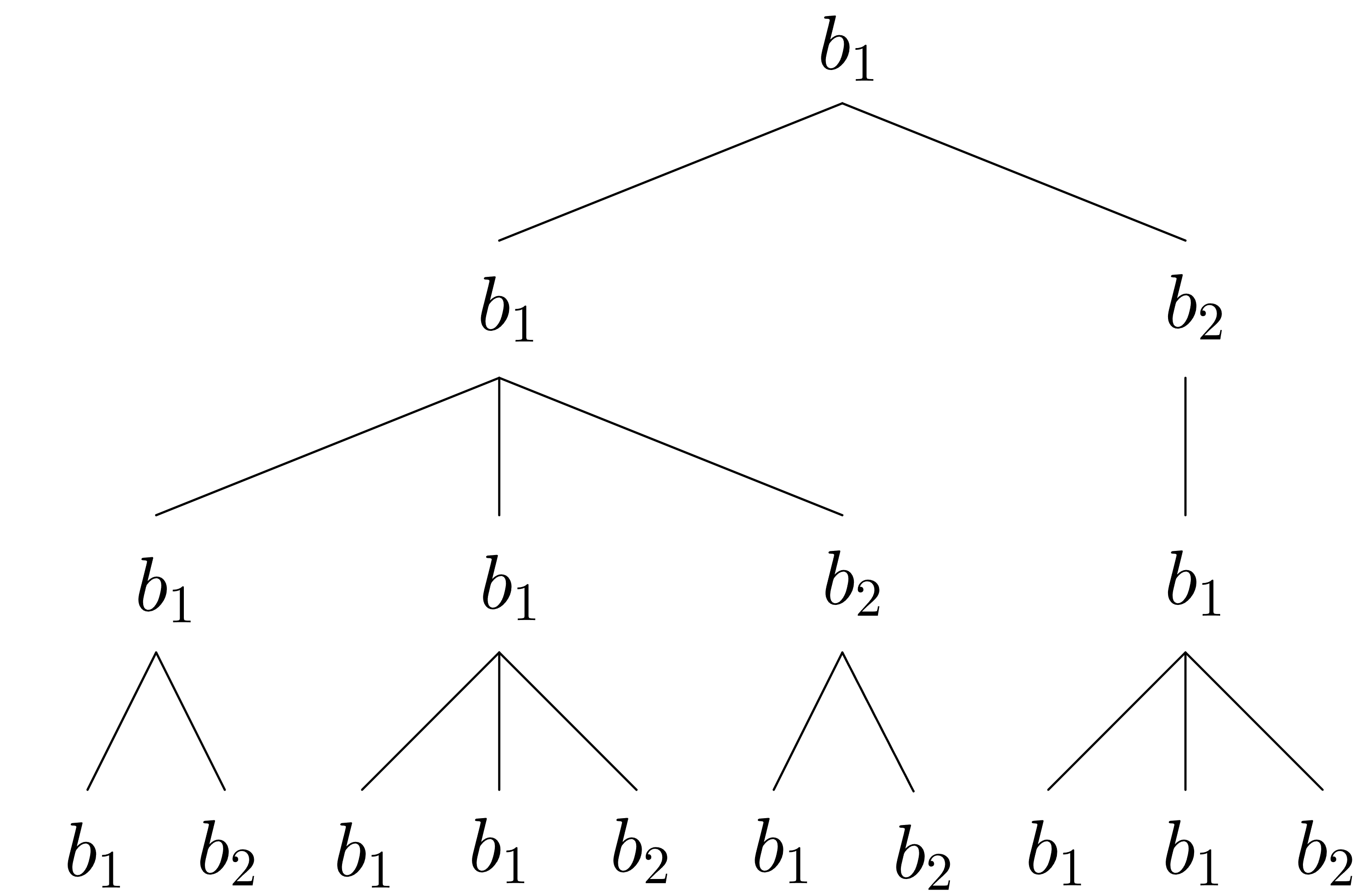} %插入图片，[]中设置图片大小，{}中是图片文件名
	\caption{The first four levels of a possible realization of $\{\mathbf{Z}_n\}_{n\geq 0}$} %最终文档中希望显示的图片标题
 \label{Fig: partial realization}
\end{figure}

\begin{figure}[H] %H为当前位置，!htb为忽略美学标准，htbp为浮动图形
	\centering %图片居中
	\includegraphics[width=0.8\textwidth]{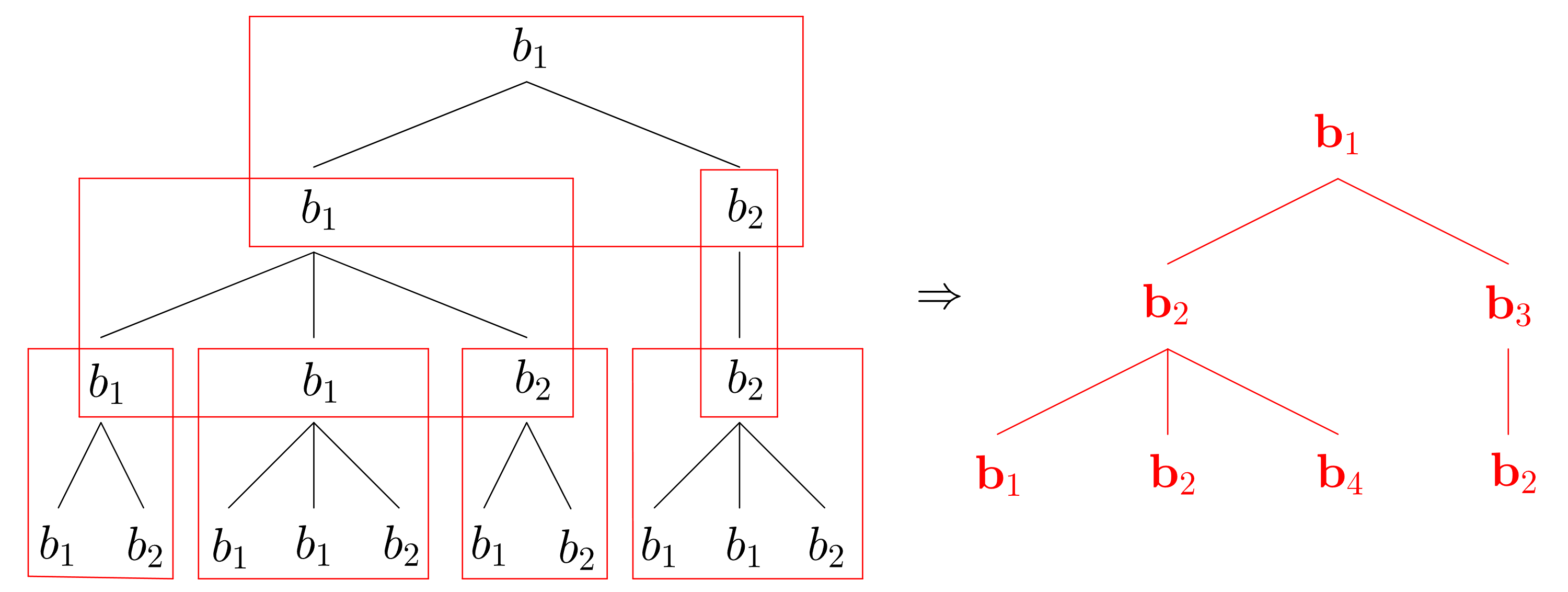} %插入图片，[]中设置图片大小，{}中是图片文件名
	\caption{An illustration of how to induce $\{\mathbf{Z}^{[1]}_n\}_{n\geq 0}$ from $\{\mathbf{Z}_n\}_{n\geq 0}$ via $\mathcal{R}^{[k]}$ } %最终文档中希望显示的图片标题
\label{Fig: relabeled}
\end{figure}

Now, we take another type set $\mathcal{A}=\{a, b\}$ type set and a $1$-block code $\Phi^{[1]}: \mathcal{P}_1\rightarrow \mathcal{A} $. This $1$-block code can also be viewed as a $0$-block code $\mathbf{\Phi}^{[1]}: \mathbf{B}^{[1]}\rightarrow \mathcal{A} $. Assume that 
$$
\begin{array}{c}
\mathbf{\Phi}^{[1]}(\mathbf{b_1})=\mathbf{\Phi}^{[1]}(\mathbf{b_3})=a \hspace{0.5cm} \textrm{ and } \hspace{0.5cm} \mathbf{\Phi}^{[1]}(\mathbf{b_2})=\mathbf{\Phi}^{[1]}(\mathbf{b_4})=b.
\end{array}
$$
 So, by mapping each type in $\mathbf{B}^{[1]}$ to its corresponding image in $\mathcal{A}$, $\mathbf{\Phi}^{[1]}$ transformed realizations of $\{\mathbf{Z}^{[1]}_n\}_{n\geq 0}$ into realizations of $\{\mathbf{Z}_n^{[1],\Phi}\}_{n\geq 0}$. Figure \ref{Fig: new} illustrates this transformation by $\mathbf{\Phi}^{[1]}$ for the realization (the red tree) of $\{\mathbf{Z}^{[1]}_n\}_{n\geq 0}$ in Figure \ref{Fig: relabeled} and the green tree is a realization of the random projected spread model $\{\mathbf{Z}_n^{[1],\Phi}\}_{n\geq 0}$. As we can see from Figure \ref{Fig: new}, the realizations of $\{\mathbf{Z}_n^{[1],\Phi}\}_{n\geq 0}$ has the same tree-structure as their corresponding pre-images but the nodes are labeled with types from the explicit type set $\mathcal{A}$ instead of the hidden type set $\mathcal{B}$.

\begin{figure}[H] %H为当前位置，!htb为忽略美学标准，htbp为浮动图形
	\centering %图片居中
	\includegraphics[width=0.8\textwidth]{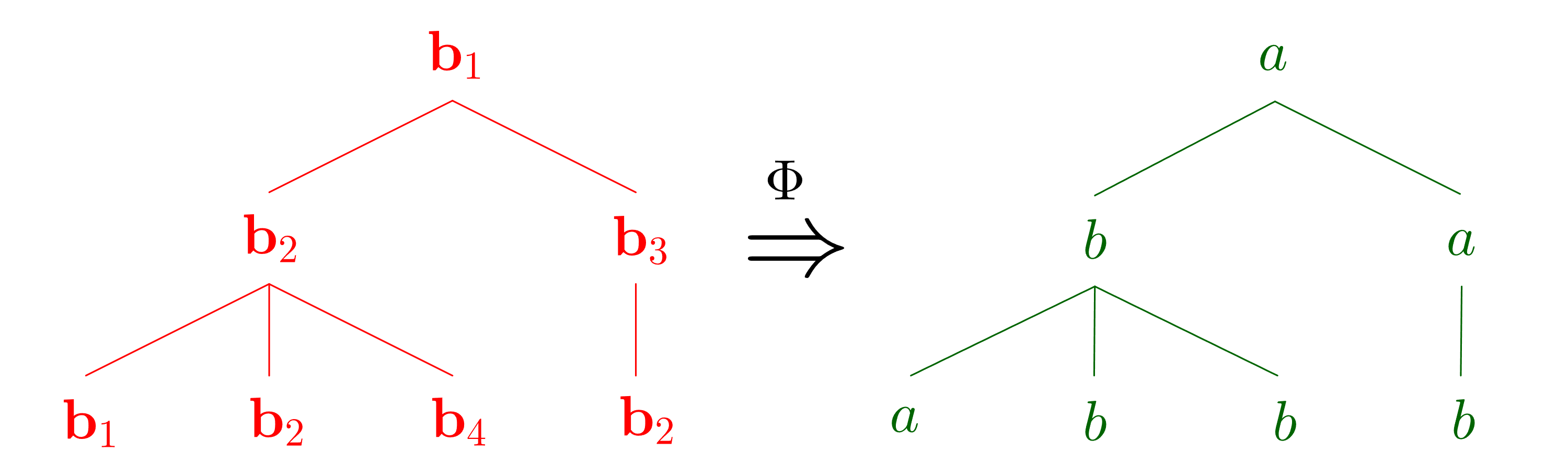} %插入图片，[]中设置图片大小，{}中是图片文件名
	\caption{An illustration of how to induce $\{\mathbf{Z}^{[1],\Phi}_n\}_{n\geq 0}$ from $\{\mathbf{Z}^{[1]}_n\}_{n\geq 0}$ } %最终文档中希望显示的图片标题
\label{Fig: new}
\end{figure}
\end{example}

We now can define the spread rate of an explicit type $a_j\in\mathcal{A}$ in the projected spread model $\{\mathbf{Z}^{[k],\Phi}_n\}_{n \geq 0}$ . If the original random $1$-spread model $\{\mathbf{Z}_n\}_{n\geq 0}$ is initiated with an individual of type $b_i\in \mathcal{B}$ and projected by the $k$-block code $\Phi^{[k]}:\mathcal{P}_k\rightarrow \mathcal{A}$, then the spread rate of the explicit type $a_j$ in $\{\mathbf{Z}_n^{[k],\Phi}\}_{n\geq 0}$ is defined as
$$
\begin{array}{c}
s_{b_{i}}(a_j;\{\mathbf{Z}_n\}_{n\geq 0},\Phi^{[k]})=\displaystyle{\lim_{n\rightarrow \infty}\frac{Z^{[k],\Phi}_{n,j}}{\sum\limits_{j=1}^{K'} Z^{[k],\Phi}_{n,j}}}
\end{array}
 $$
where $i=1,2,\cdots,K$ and  $j=1,2,\cdots,K'$. We also define
$$
s_{b_i}(a_j;\{\mathbf{Z}_n\}_{n\geq 0},\Phi^{[k]},\{k_{n}\}_{n=1}^{\infty })=\displaystyle{\lim_{n\rightarrow \infty }
\frac{\sum\limits_{r=s_n+1}^{s_{n+1}}Z^{[k],\Phi}_{r,j}}{\sum\limits_{r=s_n+1}^{s_{n+1}}\sum\limits_{j=1}^{K'}Z^{[k],\Phi}_{r,j}}}.
$$
where $
\{k_{n}\}_{n=1}^{\infty }\subseteq \mathbb{N}$ and $ s_{n}=\sum_{i=1}^{n}k_{i}$.

 Note that, when $k=0$, i.e., the original $1$-spread model is projected by a $0$-block code,  the spread mean matrices $M$ of  $\{\mathbf{Z}_n\}_{n\geq 0}$ provides us information to determine the growth rate of the population and find the spread rates of explicit types, as shown in Theorem \ref{thm_random 1s0b}. When $k\geq 1$, the spread mean matrix $\mathbf{M}^{[k+1]}$ of the induced random $1$-spread model $\{\mathbf{Z}^{[k]}_n\}_{n\geq 0}$ plays the same role in allowing us to find the growth rate and the spread rate of explicit types. However, $\mathbf{M}^{[k+1]}$ is an $K^{\#}\times K^{\#}$ matrix which may not be of the same size as the spread mean matrix $M$ of the original random $1$-spread model $\{\mathbf{Z}_n\}_{n\geq 0}$ and this is a difference between the topological cases (refer to Lemma \ref{Lemma:xi-matrices }) and the random cases.

%\begin{lemma}
%If $M$ is primitive, then $\mathbf{M}^{[k+1]}$ %is primitive.
%\end{lemma}

\begin{lemma}\label{lem: jlogj}
Let $\{\mathbf{Z}^{[k]}_n\}_{n\geq 0}$ be the $1$-random spread model induced from $\{\mathbf{Z}_n\}_{n\geq 0}$ via the natural random $k$-spread model $\mathcal{R}^{[k]}$. Then, for all $i,j=1,2,\cdots, K^\#$,
$$
\begin{array}{c}
E(Z^{[k]}_{1,j}\log Z^{[k]}_{1,j}|\mathbf{Z}^{[k]}_0=\mathbf{1}_{\mathbf{b}_i})<\infty.
\end{array}
$$
\end{lemma}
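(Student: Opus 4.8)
The plan is to reduce the statement to the elementary fact that a uniformly bounded, nonnegative, integer-valued random variable has a finite $x\log x$ moment; the real content is the observation that the induced model $\{\mathbf{Z}^{[k]}_n\}_{n\geq 0}$ has a bounded offspring count, even though the original model $\{\mathbf{Z}_n\}_{n\geq 0}$ need not. In particular, I do not expect to need the $X\log X$ hypothesis that is imposed on the original model in Proposition \ref{prop_randpm 1-spread model}.

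First I would unwind the construction of $\{\mathbf{Z}^{[k]}_n\}_{n\geq 0}$ from the natural random $k$-spread model $\mathcal{R}^{[k]}$. A type $\mathbf{b}_i\in\mathbf{B}^{[k]}$ is a potential $k$-pattern, that is, the labelled subtree down to level $k$ rooted at some original individual; its support is a subset of the conventional $d$-tree $\mathcal{T}_d$, so the root has at most $d$ children. Passing from a node labelled $\mathbf{b}_i$ to the next generation of the induced $1$-spread model amounts to descending one level in the original tree and relabelling each level-$1$ child by the $k$-pattern rooted there. Because $k\geq 1$, the level-$1$ children are already recorded by $\mathbf{b}_i$, so their number is a fixed integer $d_{\mathbf{b}_i}\leq d$ depending only on $\mathbf{b}_i$. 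Hence, conditioned on $\mathbf{Z}^{[k]}_0=\mathbf{1}_{\mathbf{b}_i}$, the first induced generation has exactly $d_{\mathbf{b}_i}$ individuals,
\[
\sum_{j=1}^{K^{\#}}Z^{[k]}_{1,j}=d_{\mathbf{b}_i}\qquad\text{a.s.},
\]
a fact that matches Example \ref{ex: r2}, where $\mathbf{b}_1,\dots,\mathbf{b}_4$ produce the constant totals $2,3,1,2$.

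From here the estimate is immediate. Each coordinate satisfies $0\leq Z^{[k]}_{1,j}\leq d_{\mathbf{b}_i}\leq d$ almost surely, so $Z^{[k]}_{1,j}$ takes values in the finite set $\{0,1,\dots,d\}$. Since $t\mapsto t\log t$ (with $0\log 0:=0$) is bounded on this set, I obtain
\[
E\bigl(Z^{[k]}_{1,j}\log Z^{[k]}_{1,j}\mid \mathbf{Z}^{[k]}_0=\mathbf{1}_{\mathbf{b}_i}\bigr)\leq d\log d<\infty
\]
for all $i,j=1,\dots,K^{\#}$, which is exactly the assertion of the lemma.

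The step I expect to require the most care — conceptual rather than computational — is the justification that one induced generation produces only the level-$1$ offspring recorded by the current pattern, with no contribution from deeper levels. This is precisely the content of the construction of $\mathcal{R}^{[k]}$ in \cite{ban2023mathematical}, and it is what forces the induced offspring count to be bounded (indeed deterministic once the type is fixed). Once that bijection between induced offspring and level-$1$ children is in hand, the randomness merely redistributes the fixed pool of $d_{\mathbf{b}_i}$ children among the finitely many types in $\mathbf{B}^{[k]}$, and the bounded-support moment bound closes the argument.
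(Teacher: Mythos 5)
Your proof is correct and takes essentially the same approach as the paper: the paper's one-line proof invokes exactly the fact you identify, namely that the support of every potential $k$-pattern is a subset of the conventional $d$-tree, which bounds the induced offspring count by $d$ and renders $E(Z^{[k]}_{1,j}\log Z^{[k]}_{1,j}\mid \mathbf{Z}^{[k]}_0=\mathbf{1}_{\mathbf{b}_i})$ trivially finite. You have simply made explicit the details the paper leaves implicit, including the sharper observation that the total offspring count is the deterministic constant $d_{\mathbf{b}_i}\leq d$ once the type is fixed.
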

\begin{proof} The result directly follows from the fact that the support of every potential $k$-pattern, $k\geq 1$, is a subset of the conventional $d$-tree $\mathbf{T}_d$.
\end{proof}

\begin{theorem}\label{thm_random 1skb}
Let $\{\mathbf{Z}_n\}_{n\geq 0}$ be a random $1$-spread model with the type set $\mathcal{B}=\{b_i\}_{i=1}^K$. Let $\mathcal{A}=\{a_i\}_{i=1}^{K'}$ be another type set and $\Phi^{[k]}:\mathcal{P}_k\rightarrow\mathcal{A}$ be a $k$-block code where $k$ is a positive integer. Let $\{\mathbf{Z}^{[k],\Phi}_n\}_{n\geq 0}$ be the random projected spread model induced from $\{\mathbf{Z}_n\}_{n\geq 0}$ and $\Phi^{[k]}$. Let $\mathbf{M}^{[k+1]}$ be the spread mean matrix of the random $1$-spread model $\{\mathbf{Z}^{[k]}_n\}_{n\geq 0}$ induced from $\{\mathbf{Z}_n\}_{n\geq 0}$ via $\mathcal{R}^{[k]}$.  Suppose that $\mathbf{\rho}^{[k]}>1$ is the maximal eigenvalue of $\mathbf{M}^{[k+1]}$ with positive normalized left eigenvector $\mathbf{w}=(\mathbf{w}_1, \cdots, \mathbf{w}_{K^\#})$ and suppose that $
\{k_{n}\}_{n=1}^{\infty }\subseteq \mathbb{N}$ is a sequence such that $k_n=k$ for all $n$ or $k_n\rightarrow \infty$ as $n\rightarrow \infty$. Then
\begin{enumerate}
  \item [(i)] there exists a random variable $W^{[k]}$ and a vector $u$ such that
  $$
  \begin{array}{c}
  \displaystyle{\lim_{n\rightarrow \infty}\frac{\mathbf{Z}^{[k],\Phi}_n}{(\mathbf{\rho}^{[k]})^n}}= u W^{[k]} \hspace{1cm} \textrm{ a.s. }
  \end{array}
  $$
  \item [(ii)] for any $b_{i_0}\in \mathcal{B}$, on the event of non-extinction given that $\mathbf{Z}_0=\mathbf{1}_{b_{i_0}}$, the spread rate is
$$
\begin{array}{c}
s_{b_{i_0}}(a_j;\{\mathbf{Z}_n\}_{n\geq 0},\Phi^{[k]},\{k_{n}\}_{n=1}^{\infty })=\displaystyle{\sum_{i: \Phi^{[k]}(\mathbf{b}_i)=a_j}\mathbf{w}_i}\hspace{1cm} \textrm{ a.s. }
\end{array}
$$
and is independent of $b_{i_0}$.
\end{enumerate}
\end{theorem}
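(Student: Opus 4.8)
The plan is to reduce the $k$-block code situation to the $0$-block code situation already settled in Theorem \ref{thm_random 1s0b}, exactly paralleling the topological reduction carried out in Theorem \ref{Thm: 4}. The construction preceding this theorem repackages the original random $1$-spread model $\{\mathbf{Z}_n\}_{n\geq 0}$ into the induced random $1$-spread model $\{\mathbf{Z}^{[k]}_n\}_{n\geq 0}$, whose type set is $\mathbf{B}^{[k]}$, the finite collection of potential $k$-patterns, and whose spread mean matrix is $\mathbf{M}^{[k+1]}$. Under this repackaging the $k$-block code $\Phi^{[k]}:\mathcal{P}_k\rightarrow\mathcal{A}$ becomes the associated $0$-block code $\mathbf{\Phi}^{[k]}:\mathbf{B}^{[k]}\rightarrow\mathcal{A}$, and by the very definition of the induced processes the random projected model $\{\mathbf{Z}^{[k],\Phi}_n\}_{n\geq 0}$ coincides with the $0$-block-code projection of $\{\mathbf{Z}^{[k]}_n\}_{n\geq 0}$ by $\mathbf{\Phi}^{[k]}$. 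Thus the whole statement should follow by applying Theorem \ref{thm_random 1s0b} to the pair $(\{\mathbf{Z}^{[k]}_n\}_{n\geq 0},\mathbf{\Phi}^{[k]})$ in place of $(\{\mathbf{Z}_n\}_{n\geq 0},\Phi)$.

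To run this argument I would first verify that the hypotheses of Theorem \ref{thm_random 1s0b} transfer to $\{\mathbf{Z}^{[k]}_n\}_{n\geq 0}$. The $x\log x$ moment condition $E(Z^{[k]}_{1,j}\log Z^{[k]}_{1,j}\mid \mathbf{Z}^{[k]}_0=\mathbf{1}_{\mathbf{b}_i})<\infty$ is supplied by Lemma \ref{lem: jlogj}; the supercriticality $\rho^{[k]}>1$ and the existence of a positive normalized left eigenvector $\mathbf{w}$ of $\mathbf{M}^{[k+1]}$ are assumed in the statement. Theorem \ref{thm_random 1s0b}(i) then yields a random variable $W^{[k]}$ and a vector $u$ with $u_j=\sum_{i:\mathbf{\Phi}^{[k]}(\mathbf{b}_i)=a_j}\mathbf{w}_i$ satisfying $\lim_{n\to\infty}(\rho^{[k]})^{-n}\mathbf{Z}^{[k],\Phi}_n=uW^{[k]}$ almost surely, which is part (i). Part (ii) follows from Theorem \ref{thm_random 1s0b}(ii), giving the spread rate $\sum_{i:\mathbf{\Phi}^{[k]}(\mathbf{b}_i)=a_j}\mathbf{w}_i=\sum_{i:\Phi^{[k]}(\mathbf{b}_i)=a_j}\mathbf{w}_i$; the equality of the two index sets is immediate since $\mathbf{\Phi}^{[k]}$ is by definition the reading of $\Phi^{[k]}$ on $\mathbf{B}^{[k]}$.

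The step that needs the most care is matching the initial configurations and the non-extinction events across the two descriptions. Starting the original model with a single individual of type $b_{i_0}$ does \emph{not} correspond to starting $\{\mathbf{Z}^{[k]}_n\}_{n\geq 0}$ from a single type of $\mathbf{B}^{[k]}$, but rather from a distribution over the potential $k$-patterns rooted at $b_{i_0}$ (this is precisely the initial distribution computed in Example \ref{ex: r2}). I would handle this by invoking the clause in Theorem \ref{thm_random 1s0b}(ii) that the limiting spread rate is independent of the initial type of the underlying $1$-spread model: conditioning on the realized initial $k$-pattern, each branch produces the same limit $\sum_i \mathbf{w}_i$, so the mixture does too. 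I would also confirm that the event of non-extinction for $\{\mathbf{Z}_n\}_{n\geq 0}$ given $\mathbf{Z}_0=\mathbf{1}_{b_{i_0}}$ pulls back correctly; here the lemma from \cite{ban2023mathematical} that $|\mathbf{Z}^{[k]}_n|\to\infty$ almost surely guarantees that the induced process behaves supercritically on the relevant survival event, so that the almost-sure conclusions of Theorem \ref{thm_random 1s0b} genuinely hold on the stated event and are independent of $b_{i_0}$.
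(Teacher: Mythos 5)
Your proposal is correct and follows essentially the same route as the paper: reduce to Theorem \ref{thm_random 1s0b} applied to the induced model $\{\mathbf{Z}^{[k]}_n\}_{n\geq 0}$ with the associated $0$-block code $\mathbf{\Phi}^{[k]}$, using Lemma \ref{lem: jlogj} for the $x\log x$ condition, and then handle the fact that $\mathbf{Z}_0=\mathbf{1}_{b_{i_0}}$ induces a mixture over initial potential $k$-patterns. The paper carries out your ``condition on the realized initial $k$-pattern'' step explicitly, decomposing the population counts with indicators $I_{\{\mathbf{Z}^{[k]}_0=\mathbf{1}_{\mathbf{b}_r}\}}$ over the finite set $\mathbf{B}^{[k]}_{i_0}$, showing $E_{i_0}\subset \bigcup_{\mathbf{b}_r\in\mathbf{B}^{[k]}_{i_0}}\mathbf{E}_r$, and combining the branches with Lemma \ref{Lma: 2}, exactly as you outline.
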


\begin{proof}
\begin{enumerate}
\item [(i)] Since  $\{\mathbf{Z}^{[k],\Phi}_n\}_{n\geq 0}$ can also be considered as the projected spread model induced from the random $1$-spread model  $\{\mathbf{Z}^{[k]}_n\}_{n\geq 0}$ and the associate $0$-block code $\mathbf{\Phi}^{[k]}$, together with Lemma \ref{lem: jlogj}, by the similar lines for proving Theorem \ref{thm_random 1s0b} (i), we can show that there exists a random variable $W^{[k]}$ and a vector $u$ such that
  $$
  \begin{array}{c}
  \displaystyle{\lim_{n\rightarrow \infty}\frac{\mathbf{Z}^{[k],\Phi}_n}{(\mathbf{\rho}^{[k]})^n}}= u W^{[k]} \hspace{1cm} \textrm{ a.s. }
  \end{array}
  $$
where $u=(u_1,u_2,\cdots,u_{K'})$ and
$$
\begin{array}{c}
u_j=\sum\limits_{i: \mathbf{\Phi^{[k]}}(\mathbf{b}_i)=a_j} \mathbf{w}_i
\end{array}
$$
for $j=1,2,\cdots, K'$.

\item [(ii)] Let $E_{i_0}$ be the event of non-extinction for $\{\mathbf{Z}_n\}_{n\geq 0}$ given that $\mathbf{Z}_0=\mathbf{1}_{b_{i_0}}$, $b_{i_0}\in \mathcal{B}$, and let $\mathbf{E}_r$ be the event of non-extinction for $\{\mathbf{Z}^{[k]}_n\}_{n\geq 0}$ given that $\mathbf{Z}^{[k]}_0=\mathbf{1}_{\mathbf{b}_r}$, $\mathbf{b}_r\in\mathbf{B}^{[k]}$. For every $\omega\in E_{i_0}$, we have that
$$
\begin{array}{c}
|\mathbf{Z}_n(\omega)|=\displaystyle{\sum\limits_{j=1}^K Z_{n,j}(\omega)}\rightarrow \infty \hspace{0.3cm} \textrm{ as } n\rightarrow \infty
\end{array}
$$
and, by the construction of $\{\mathbf{Z}^{[k]}_n\}_{n\geq 0}$ from $\{\mathbf{Z}_n\}_{n\geq 0}$, it implies that
$$
\begin{array}{c}
|\mathbf{Z}^{[k]}_n(\omega)|=\displaystyle{\sum\limits_{j=1}^K Z^{[k]}_{n,j}(\omega)}\rightarrow \infty \hspace{0.3cm} \textrm{ as } n\rightarrow \infty.
\end{array}
$$
Therefore, $E_{i_0} \subset \displaystyle{\cup_{\mathbf{b}_r\in \mathbf{B}^{[k]}_{i_0}}\mathbf{E}_r}$.

In addition, it follows from Theorem \ref{thm_random 1s0b} (ii) that, for any $\mathbf{b}_{r}\in \mathbf{B}^{[k]}$, on the event of non-extinction $\mathbf{E}_{r}$ for $\{\mathbf{Z}^{[k]}_n\}_{n\geq 0}$, given that $\mathbf{Z}^{[k]}_0=\mathbf{1}_{\mathbf{b}_r}$,
$$
\begin{array}{c}
s_{\mathbf{b}_r}(a_j;\{\mathbf{Z}^{[k]}_n\}_{n\geq 0},\mathbf{\Phi}^{[k]})=\displaystyle{\lim_{n\rightarrow \infty}\frac{Z^{[k],\Phi}_{n,j}}{\sum\limits_{j=1}^{K'} Z^{[k],\Phi}_{n,j}}=\displaystyle{\sum_{i: \mathbf{\Phi}^{[k]}(\mathbf{b}_i)=a_j}\mathbf{w}_i}}\hspace{0.5cm} \textrm{ a.s. }
\end{array}
$$
which is independent of $\mathbf{b}_r$, where $I_D$ is the indicator function of the event $D$. So, since the event
$$
\begin{array}{c}
\big\{\omega:\mathbf{Z}_0(\omega)=\mathbf{1}_{b_{i_0}}(\omega)\big\}=\bigcup\limits_{\mathbf{b}_r\in\mathbf{B}^{[k]}_{i_0}}\big\{\omega:\mathbf{Z}^{[k]}_0(\omega)=\mathbf{1}_{\mathbf{b}_r}(\omega)\big\}
\end{array}
$$ 
is a disjoint union and $\mathbf{B}^{[k]}_{i_0}$ is a finite set, by Lemma \ref{Lma: 2}, we have that
$$
\begin{array}{rl}
&s_{b_{i_0}}(a_j;\{\mathbf{Z}_n\}_{n\geq 0},\Phi^{[k]})\\
=&\displaystyle{\lim_{n\rightarrow \infty}\frac{Z^{[k],\Phi}_{n,j}}{\sum\limits_{j=1}^{K'} Z^{[k],\Phi}_{n,j}}}=\displaystyle{\lim_{n\rightarrow \infty}\frac{\sum\limits_{\mathbf{b}_r\in \mathbf{B}^{[k]}_{i_0}}Z^{[k],\Phi}_{n,j}I_{\{\mathbf{Z}^{[k]}_0=\mathbf{1}_{\mathbf{b}_r}\}}}{\sum\limits_{j=1}^{K'} \sum\limits_{\mathbf{b}_r\in \mathbf{B}^{[k]}_{i_0}}Z^{[k],\Phi}_{n,j}I_{\{\mathbf{Z}^{[k]}_0=\mathbf{1}_{\mathbf{b}_r}\}}}}\\
=&\displaystyle{\lim_{n\rightarrow \infty}\frac{\sum\limits_{\mathbf{b}_r\in \mathbf{B}^{[k]}_{i_0}}Z^{[k],\Phi}_{n,j}I_{\{\mathbf{Z}^{[k]}_0=\mathbf{1}_{\mathbf{b}_r}\}}}{ \sum\limits_{\mathbf{b}_r\in \mathbf{B}^{[k]}_{i_0}}\sum\limits_{j=1}^{K'}Z^{[k],\Phi}_{n,j}I_{\{\mathbf{Z}^{[k]}_0=\mathbf{1}_{\mathbf{b}_r}\}}}=\sum\limits_{i: \mathbf{\Phi}^{[k]}(\mathbf{b}_i)=a_j}\mathbf{w}_i} \hspace{0.3cm} \textrm{a.s. on $E_{i_0}$.}
\end{array}
 $$
 Moreover, it follows from Lemma \ref{Lma: 2} again that
$$
\begin{array}{rl}
&s_{b_{i_0}}(a_j;\{\mathbf{Z}_n\}_{n\geq 0},\Phi^{[k]},\{k_{n}\}_{n=1}^{\infty })\\
=&\displaystyle{\lim_{n\rightarrow \infty }
\frac{\sum\limits_{r=s_n+1}^{s_{n+1}}Z^{[k],\Phi}_{r,j}}{\sum\limits_{r=s_n+1}^{s_{n+1}}\sum\limits_{j=1}^{K'}Z^{[k],\Phi}_{r,j}}}=\displaystyle{\lim_{n\rightarrow \infty}\frac{Z^{[k],\Phi}_{n,j}}{\sum\limits_{j=1}^{K'} Z^{[k],\Phi}_{n,j}}=\sum\limits_{i: \Phi(\mathbf{b}_i)=a_j} \mathbf{w}_i}  \hspace{0.3cm} \textrm{a.s. on $E_{i_0}$.}
\end{array}
$$
\end{enumerate}
\end{proof}

\section{Examples with numerical results}\label{sec: num}

In this section, we constructed some examples to help illustrate the results of topological models described in Section 2 and the random models described in Section 3 of this paper.

\subsection{Examples for Topological models}
In this subsection, we construct three examples corresponding to the cases where $m-1=k$, $m-1<k$, and $m-1>k$, respectively, to illustrate Theorem \ref{Thm: 2}, Theorem \ref{Thm: 4}, and Theorem \ref{Thm: 3} in Section 2.
\subsubsection{The case where $m-1=k$}
We first consider $(m,k)=(1,0).$ Let $\mathcal{B}=\{a^1,a^2,b\}$ and $\mathcal{A}=\{a,b\}.$ Assume the 1-spread model $\mathcal{S}=\{p_1=(a^1;a^1,a^2,b),p_2=(a^2;a^1,a^2,b),p_3=(b;a^1,a^2)\}.$  Define the 0-block code $\Phi:\mathcal{B}\to\mathcal{A}$ by $\Phi(a^i)=a$ and $\Phi(b)=b.$ Then the associated $\xi$-matrix is 
\begin{align*}
    M&=\begin{pmatrix}
    1&1&1\\
    1&1&1\\
    1&1&0
\end{pmatrix}\\
&=\begin{pmatrix}
    \frac{2\sqrt{3}+3}{6}&-1&\frac{-\sqrt{3}+1}{2}\\
    \frac{2\sqrt{3}+3}{6}&1&\frac{-\sqrt{3}+1}{2}\\
    \frac{\sqrt{3}+3}{6}&0&1
\end{pmatrix}\begin{pmatrix}
    \sqrt{3}+1&0&0\\
    0&0&0\\
    0&0&-\sqrt{3}+1
\end{pmatrix}\begin{pmatrix}
    \frac{\sqrt{3}-1}{2}&\frac{\sqrt{3}-1}{2}&2-\sqrt{3}\\
    \frac{-1}{2}&\frac{-1}{2}&0\\
    \frac{-\sqrt{3}}{6}&\frac{-\sqrt{3}}{6}&\frac{\sqrt{3}+3}{6}
\end{pmatrix}
\end{align*}
For $b\in\mathcal{B}$ and $a\in\mathcal{A},$ then we have 
\begin{align*}
    s_{p_3}(a,\mathcal{S},\Phi,\{1\}_{n=1}^\infty)&=\lim_{n\to\infty}\frac{\textbf{1}_b^t M^n\textbf{1}_{\Phi^{-1}(a)}}{\textbf{1}_b^t M^n\textbf{1}}\\
    &=\lim_{n\to\infty}\frac{\frac{2\sqrt{3}}{6}(\sqrt{3}+1)^n+\frac{-2\sqrt{3}}{6}(-\sqrt{3}+1)^n}{\frac{\sqrt{3}+3}{6}(\sqrt{3}+1)^n+\frac{-\sqrt{3}+3}{6}(-\sqrt{3}+1)^n}\\
    &=\sqrt{3}-1=\sum_{c\in \mathcal{B}:\Phi(c)=a}w(c),
\end{align*}
where $w=\left(\frac{\sqrt{3}-1}{2},\frac{\sqrt{3}-1}{2},2-\sqrt{3}\right)$ is the left eigenvector of the $\xi$-matrix $M$ corresponding to the maximal eigenvalue $\rho_M=\sqrt{3}+1$ of $M.$

In this case, we run a simulation (cf. Figure \ref{Fig: topological spread rate}) and find that the experimental values coincide with the theoretical values.

\begin{figure}[H] %H为当前位置，!htb为忽略美学标准，htbp为浮动图形
	\centering %图片居中
	\includegraphics[width=\textwidth]{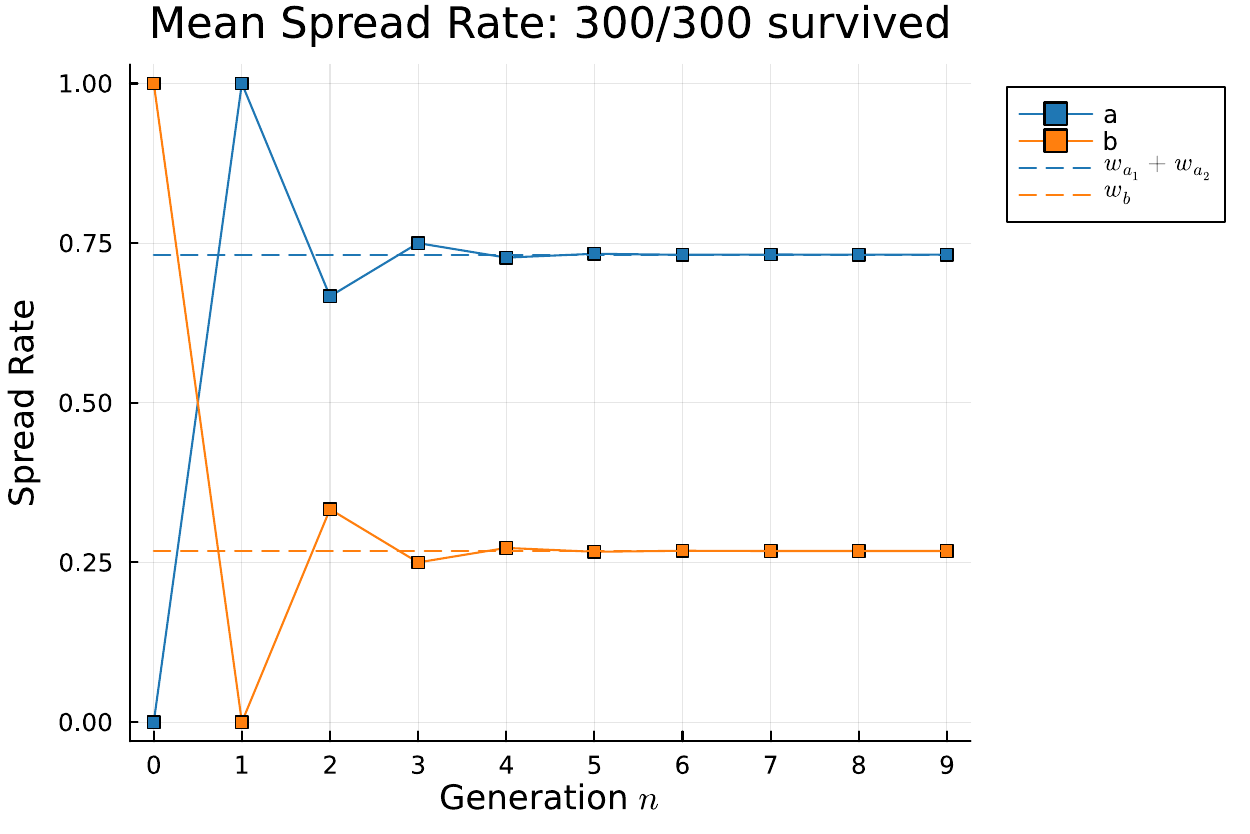} %插入图片，[]中设置图片大小，{}中是图片文件名
	\caption{Examples of a topological model with three types. The solid line represents experimental values, while the dashed line represents theoretical values. In this case, the spread rate is numerically approximated by averaging over 300 simulations.} %最终文档中希望显示的图片标题
  \label{Fig: topological spread rate}
\end{figure}

\subsubsection{The case where $m-1<k$}
We now consider $(m,k)=(1,1).$ Let $\mathcal{B}=\{a^1,a^2,b\}$ and $\mathcal{A}=\{a,b\}.$ Assume the 1-spread model $\mathcal{S}=\{p_1=(a^1;a^2,b),p_2=(a^2;a^1,a^2),p_3=(b;a^1,b)\}.$ Then $\mathcal{B}^{[1]}=\{\textbf{a}^1,\textbf{a}^2,\textbf{b}\}$ and 
$$
\begin{array}{c}
\mathcal{S}^{[2]}=\{\mathbf{p}_1^{[2]}=(a^1;a^2,b;a^1,a^2,a^1,b),\mathbf{p}_2^{[2]}=(a^2;a^1,a^2;a^2,b,a^1,a^2),\\\mathbf{p}_3^{[2]}=(b;a^1,b;a^2,b,a^1,b)\}.
\end{array}
$$
Define the 1-block code $\Phi^{[1]}:\mathcal{B}^{[1]}\to\mathcal{A}$ by $\Phi^{[1]}(p_1)=\Phi^{[1]}(p_2)=a$ and $\Phi^{[1]}(p_3)=b.$ Then the $\xi$-matrix of $\mathcal{S}^{[2]}$ is 
\begin{align*}
    \mathbf{M}^{[2]}&=\begin{pmatrix}
        0&1&1\\
        1&1&0\\
        1&0&1
    \end{pmatrix}\\
    &=\begin{pmatrix}
        1&0&-2\\
        1&-1&1\\
        1&1&1
    \end{pmatrix}\begin{pmatrix}
        2&0&0\\
        0&1&0\\
        0&0&-1
    \end{pmatrix}\begin{pmatrix}
        \frac{1}{3}&\frac{1}{3}&\frac{1}{3}\\
        0&\frac{-1}{2}&\frac{1}{2}\\
        \frac{-1}{3}&\frac{1}{6}&\frac{1}{6}
    \end{pmatrix}.
\end{align*}
In this case, we obtain that the $\xi$-matrix $M$ of $\mathcal{S}$ is equal to $\mathbf{M}^{[2]}$ (cf. Lemma \ref{Lma: 1}).

For $b\in \mathcal{B}$ and $a\in \mathcal{A},$ we have
\begin{align*}
    s_{p_3}(a,\mathcal{S},\Phi^{[k]},\{1\}_{n=1}^\infty)&=\sum_{\mathbf{c}:\Phi^{[1]}(\mathbf{c})=a}s_{\mathbf{p}_3}(\mathbf{c},\mathcal{S}^{[2]},\Phi^{[1]},\{1\}_{n=1}^\infty)\\
    &=\lim_{n\to\infty}\frac{\textbf{1}_\textbf{b}^t (\mathbf{M}^{[2]})^n \textbf{1}_{\textbf{a}^1}}{\textbf{1}_\textbf{b}^t (\mathbf{M}^{[2]})^n \textbf{1}}+\lim_{n\to\infty}\frac{\textbf{1}_\textbf{b}^t (\mathbf{M}^{[2]})^n \textbf{1}_{\textbf{a}^2}}{\textbf{1}_\textbf{b}^t (\mathbf{M}^{[2]})^n \textbf{1}}\\
    &=\lim_{n\to\infty}\frac{\frac{2^n}{3}+\frac{-(-1)^n}{3}}{2^n}+\lim_{n\to\infty}\frac{\frac{2^n}{3}-\frac{1}{2}+\frac{-(-1)^n}{6}}{2^n}\\
    &=\frac{1}{3}+\frac{1}{3}=\sum_{\textbf{c}\in\mathcal{B}^{[k]}:\Phi^{[1]}(\textbf{c})=a} \mathbf{w}(\textbf{c}),
\end{align*}
where $\mathbf{w}=(\frac{1}{3},\frac{1}{3},\frac{1}{3})$ is the left eigenvector of the $\xi$-matrix $\mathbf{M}^{[2]}$ corresponding to the maximal eigenvalue $\rho_{\mathbf{M}^{[2]}}=2$ of $\mathbf{M}^{[2]}.$

\subsubsection{The case where $m-1>k$}

We now consider $(m,k)=(2,0).$ Let $\mathcal{B}=\{a^1,a^2,b\}$ and $\mathcal{A}=\{a,b\}.$ Assume the $2$-spread model 
$$
\begin{array}{c}
\mathcal{S}=\{p_1=(a^1;a^1,a^2;b,a^2,a^1,a^2),p_2=(a^1;b,a^2;a^1,a^2,a^1,a^2),\\
p_3=(a^2;a^1,a^2;a^1,a^2,a^1,a^2),p_4=(b;a^1,a^2;a^1,a^2,a^1,a^2)\}.
\end{array}
$$
Then 
$$
\begin{array}{c}
\mathcal{B}^{[1]}=\{\textbf{b}_1=(a^1;a^1,a^2),\textbf{b}_2=(a^1;b,a^2),\textbf{b}_3=(a^2;a^1,a^2),\textbf{b}_4=(b;a^1,a^2)\}
\end{array}
$$
and
$$
\begin{array}{c}
\mathcal{S}^{[2]}=\{\textbf{p}_1=(\textbf{b}_1;\textbf{b}_2,\textbf{b}_3),\textbf{p}_2=(\textbf{b}_2;\textbf{b}_4,\textbf{b}_3),\textbf{p}_3=(\textbf{b}_3;\textbf{b}_1,\textbf{b}_3),\textbf{p}_4=(\textbf{b}_4;\textbf{b}_1,\textbf{b}_3)\}
\end{array}
$$
which is the collection of $1$-patterns over $\mathcal{B}^{[1]}.$ Define the 0-block code $\Phi:\mathcal{B}\to\mathcal{A}$ by $\Phi(a^i)=a$ and $\Phi(b)=b.$ Then the $\xi$-matrix of $\mathcal{S}^{[2]}$ over $\mathcal{B}^{[1]}$ is 
\[
    \mathbf{M}=\begin{pmatrix}
        0&1&1&0\\
        0&0&1&1\\
        1&0&1&0\\
        1&0&1&0
    \end{pmatrix}=P\begin{pmatrix}
        2&0&0&0\\
        0&0&0&0\\
        0&0&\frac{-1-i\sqrt{3}}{2}&0\\
        0&0&0&\frac{-1+i\sqrt{3}}{2}
    \end{pmatrix}P^{-1},\]
where
    \[P=\begin{pmatrix}
        1&1&\frac{-3-i\sqrt{3}}{2}&\frac{-3+i\sqrt{3}}{2}\\
        1&1&-1+i\sqrt{3}&-1-i\sqrt{3}\\
        1&-1&1&1\\
        1&1&1&1
    \end{pmatrix}~\text{and}~P^{-1}=\begin{pmatrix}
        \frac{2}{7}&\frac{1}{7}&\frac{1}{2}&\frac{1}{14}\\
        0&0&\frac{-1}{2}&\frac{1}{2}\\
        \frac{-3+2i\sqrt{3}}{21}&\frac{-3-5i\sqrt{3}}{42}&0&\frac{9+i\sqrt{3}}{42}\\
        \frac{-3-2i\sqrt{3}}{21}&\frac{-3+5i\sqrt{3}}{42}&0&\frac{9-i\sqrt{3}}{42}
    \end{pmatrix}.\]
Then, for $b\in\mathcal{B}$ and $a\in\mathcal{A},$ we have 
\begin{align*}
    s_{p_4}(a,\mathcal{S},\Phi,\{1\}_{n=1}^\infty)&=\sum_{c:\Phi(c)=a}s_{\textbf{p}_4}(c,\mathcal{S}^{[2]},\Phi,\{1\}_{n=0}^\infty)\\
    &=s_{\textbf{p}_4}(a^1,\mathcal{S}^{[2]},\Phi,\{1\}_{n=0}^\infty)+s_{\textbf{p}_4}(a^2,\mathcal{S}^{[2]},\Phi,\{1\}_{n=0}^\infty)\\
    &=\lim_{n\to\infty}\frac{\textbf{1}_{\textbf{b}_4}^t \mathbf{M}^n \textbf{1}_{\theta_{a^1}}}{\textbf{1}_{\textbf{b}_4}^t \mathbf{M}^n \textbf{1}}+\lim_{n\to\infty}\frac{\textbf{1}_{\textbf{b}_4}^t \mathbf{M}^n \textbf{1}_{\theta_{a^2}}}{\textbf{1}_{\textbf{b}_4}^t \mathbf{M}^n \textbf{1}}\\
    &=\lim_{n\to\infty} \frac{\frac{3}{7}2^n+\frac{-9-i\sqrt{3}}{42}\left(\frac{-1-i\sqrt{3}}{2}\right)^n+\frac{-9+i\sqrt{3}}{42}\left(\frac{-1+i\sqrt{3}}{2}\right)^n}{2^n}+\lim_{n\to\infty}\frac{\frac{1}{2}2^n}{2^n}\\
    &=\frac{3}{7}+\frac{1}{2}=\sum_{\textbf{c}:\textbf{c}\in\theta_{a^1}}w(\textbf{c})+\sum_{\textbf{c}:\textbf{c}\in\theta_{a^2}}w(\textbf{c})=\sum_{c:\Phi(c)=a}\sum_{\textbf{c}:\textbf{c}\in\theta_a}w(c),
\end{align*}
where $w=\left(\frac{2}{7},\frac{1}{7},\frac{1}{2},\frac{1}{14}\right)$ is the left eigenvector of the $\xi$-matrix $\mathbf{M}$ corresponding to the maximal eigenvalue $\rho_{\mathbf{M}}=2$ of $\mathbf{M}.$

\subsection{Examples for Random models}

In this section, we provide the numerical evidence for our theoretical results in Theorem \ref{thm_random 1s0b} and Theorem \ref{thm_random 1skb} through examples and simulations.

\subsubsection{The $0$-block code case}

Let $\{\mathbf{Z}_n\}_{n\geq 0}$ be the random 1-spread model with type set $\mathcal{B}=\{A_1,A_2, B_1, B_2, C_1\}$ and offspring distribution as follows:
$$
\begin{array}{lll}
p^{(A_1)}(0,1,0,0,1)=\frac{2}{5}, & p^{(A_1)}(1,0,3,1,0)=\frac{3}{5},&\vspace{0.1cm}\\
p^{(A_2)}(1,2,0,1,0)=\frac{1}{3}, & p^{(A_2)}(2,0,1,1,0)=\frac{1}{3},&p^{(A_2)}(0,0,2,0,1)=\frac{1}{3},\vspace{0.1cm}\\
p^{(B_1)}(0,0,3,1,1)=\frac{2}{3}, & p^{(B_1)}(1,1,1,0,1)=\frac{1}{3},&\vspace{0.1cm}\\
p^{(B_2)}(0,2,1,1,0)=\frac{1}{5}, & p^{(B_2)}(1,1,0,0,2)=\frac{4}{5},&\vspace{0.1cm}\\
p^{(C_1)}(2,0,0,0,3)=\frac{1}{2}, & p^{(C_1)}(0,1,3,1,0)=\frac{1}{2}.&
\end{array}
$$
Then, its offspring mean matrix is 
$$
M=\left(\begin{array}{ccccc}
        \frac{3}{5} & \frac{2}{5} & \frac{9}{5} & \frac{3}{5}& \frac{2}{5} \vspace{0.1cm}\\
      1 & \frac{2}{5} & 1 & \frac{2}{3}& \frac{1}{3} \vspace{0.1cm}\\
       \frac{1}{3} & \frac{1}{3} & \frac{7}{3} & \frac{2}{3}& 1\vspace{0.1cm} \\
        \frac{4}{5} & \frac{6}{5} & \frac{1}{5} & \frac{1}{5}& \frac{8}{5} \vspace{0.1cm}\\
         1 & \frac{1}{2} & \frac{3}{2} & \frac{1}{2}& \frac{3}{2}
    \end{array}
\right)
$$
and its maximal eigenvalue is $\rho\approx 4.38368$ with corresponding normalized left eigenvector 
$$
\begin{array}{ll}
w&=(w_{A_1}, w_{A_2}, w_{B_1}, w_{B_2}, w_{C_1})\vspace{0.1cm}\\
&\approx(0.151791, 0.114156, 0.372625, 0.127317, 0.234111).
\end{array}
$$
Now, consider another type set $\mathcal{A}=\{A, B, C\}$ and the $0$-block code $\Phi: \mathcal{B}\rightarrow \mathcal{A}$ such that
$$
\begin{array}{c}
\Phi(A_1)=\Phi(A_2)=A, \hspace{0.3cm}\Phi(B_1)=\Phi(B_2)=B,  \hspace{0.3cm}\textrm{ and }\hspace{0.3cm}\Phi(C_1)=\Phi(A_2)=C.
\end{array}
$$
Then, according to Theorem \ref{thm_random 1s0b}, the spread rates in the random projected spread model $\{\mathbf{Z}^s_n\}_{n\geq 0}$ induced by $\{\mathbf{Z}_n\}_{n\geq 0}$ and the $0$-block code $\Phi: \mathcal{B}\rightarrow \mathcal{A}$ are
$$
\begin{array}{l}
s_{b}(A;\{\mathbf{Z}_n\}_{n\geq 0},\Phi)= w_{A_1}+w_{A_2}\approx 0.265947\vspace{0.1cm}\\
s_{b}(B;\{\mathbf{Z}_n\}_{n\geq 0},\Phi)= w_{B_1}+w_{B_2}\approx 0.499942\vspace{0.1cm}\\
s_{b}(C;\{\mathbf{Z}_n\}_{n\geq 0},\Phi)= w_{C_1}\approx 0.234111
\end{array}
$$
with probability $1$ for any $b\in \mathcal{B}=\{A_1, A_2, B_1, B_2, C_1\}$. We run a simulation for this example and the mean ratio of each explicit type is numerically approximated by empirical averages over 300 realizations. From the simulation, the numerical result also shows consistency with the theoretical result in Theorem \ref{thm_random 1s0b} and it is illustrated in Figure \ref{Fig: random spread rate}.

\begin{figure}[H] %H为当前位置，!htb为忽略美学标准，htbp为浮动图形
	\centering %图片居中
	\includegraphics[width=\textwidth]{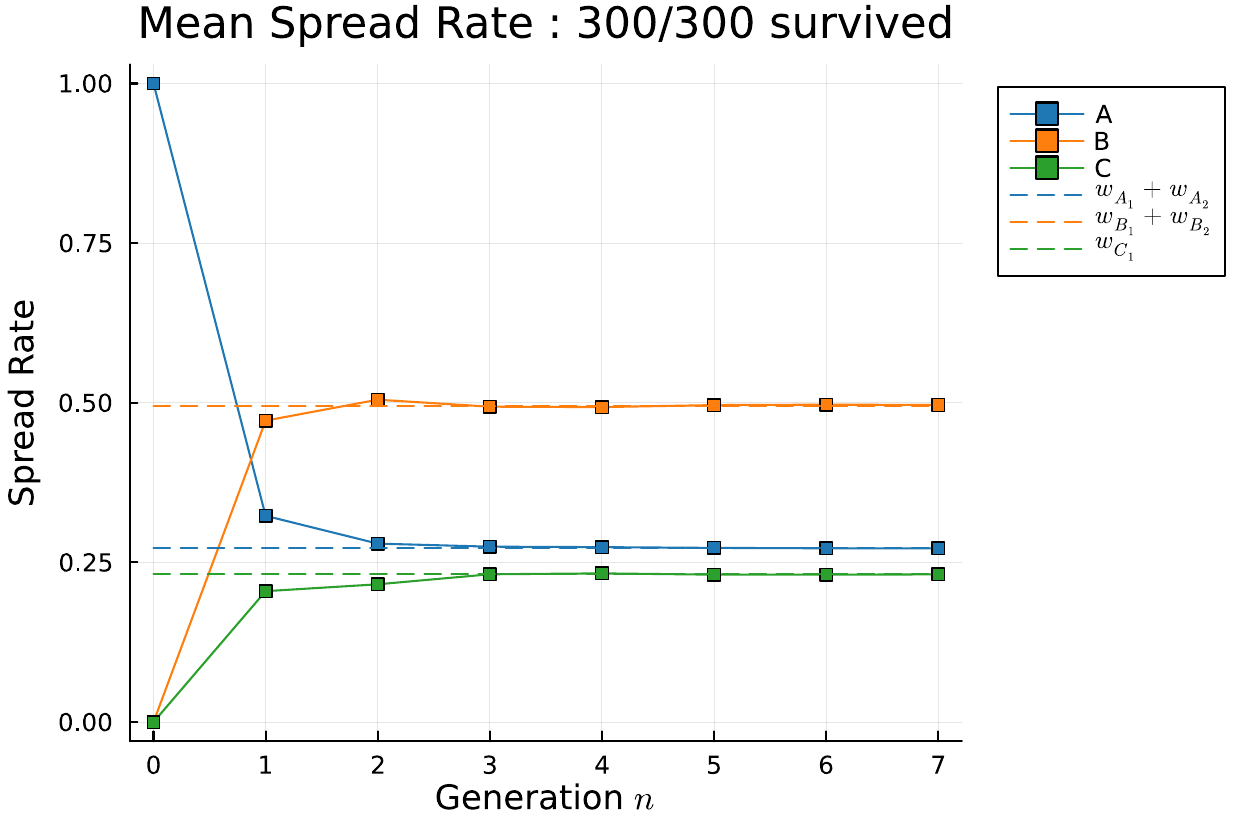} %插入图片，[]中设置图片大小，{}中是图片文件名
	\caption{Examples of a random model for 0-block code. The solid line represents experimental values, while the dashed line represents theoretical values. In this case, the spread rate is numerically approximated by averaging over 300 simulations.} %最终文档中希望显示的图片标题
  \label{Fig: random spread rate}
\end{figure}

\subsubsection{The $1$-block code case}
Here, we consider the random 1-spread model $\{\mathbf{Z}_n\}_{n\geq 0}$ in Examples \ref{ex: r1} and \ref{ex: r2} with type set $\mathcal{B}=\{b_1,b_2\}$ and obtain its spread mean matrix $M$ according to its spread distribution $\{p^{(b_i)}(\cdot)\}_{i=1}^{2}$:
$$
M=\left(\begin{array}{cc}
\frac{3}{5}&\frac{2}{3}\vspace{0.1cm}\\
1&\frac{1}{2}
\end{array}
\right)$$
Then $\{\mathbf{Z}_n\}_{n\geq 0}$ induces a random $1$-spread model $\{\mathbf{Z}^{[1]}_n\}_{n\geq 0}$ with the type set $\mathbf{B}^{[1]}=\{\mathbf{b}_1, \mathbf{b}_2, \mathbf{b}_3, \mathbf{b}_4 \}$, which is the set of all potential $1$-patterns of $\{\mathbf{Z}_n\}_{n\geq 0}$. According to the spread distribution found in Example \ref{ex: r2}, the spread mean matrix of $\{\mathbf{Z}^{[1]}_n\}_{n\geq 0}$ is a $4\times 4$ matrix given by
$$
\mathbf{M}^{[2]}=\left(\begin{array}{cccc}
\frac{1}{3}&\frac{2}{3}&\frac{1}{2}&\frac{1}{2}\vspace{0.1cm}\\
\frac{2}{3}&\frac{4}{3}&\frac{1}{2}&\frac{1}{2}\vspace{0.1cm}\\
\frac{1}{3}&\frac{2}{3}&0&0\vspace{0.1cm}\\
\frac{1}{3}&\frac{2}{3}&\frac{5}{6}&\frac{1}{6}
\end{array}
\right)$$
and its maximal eigenvalue is 
$$
\begin{array}{c}
\mathbf{\rho}^{[1]}\approx 2.22521
\end{array}
$$
with the corresponding normalized left eigenvector
$$
\begin{array}{c}
\mathbf{w}=(\mathbf{w}_1, \mathbf{w}_2, \mathbf{w}_3, \mathbf{w}_4)\approx(0.213874, 0.427749, 0.202534, 0.155843).
\end{array}
$$
Furthermore, consider the $1$-block code $\Phi^{[1]}: \mathcal{P}_1\rightarrow \mathcal{A}=\{a,b\}$ such that
$$
\begin{array}{c}
\Phi^{[1]}(\mathbf{b_1})=\Phi^{[1]}(\mathbf{b_3})=a \hspace{0.5cm} \textrm{ and } \hspace{0.5cm} \Phi^{[1]}(\mathbf{b_2})=\Phi^{[1]}(\mathbf{b_4})=b.
\end{array}
$$
The the spread rates of explicit types in $\mathcal{A}$ in the associated projected spread model $\{\mathbf{Z}^{[1],\Phi}_n\}_{n\geq 0}$ are 
$$
\begin{array}{l}
s_{b}(a_1;\{\mathbf{Z}_n\}_{n\geq 0},\Phi^{[1]})= \mathbf{w}_1+\mathbf{w}_3\approx 0.416408\vspace{0.1cm}\\
s_{b}(a_2;\{\mathbf{Z}_n\}_{n\geq 0},\Phi^{[1]})= \mathbf{w}_2+\mathbf{w}_4\approx 0.583592.
\end{array}
$$
with probability $1$ for any $b\in \mathcal{B}=\{b_1, b_2\}$.

\begin{figure}[H] %H为当前位置，!htb为忽略美学标准，htbp为浮动图形
	\centering %图片居中
	\includegraphics[width=\textwidth]{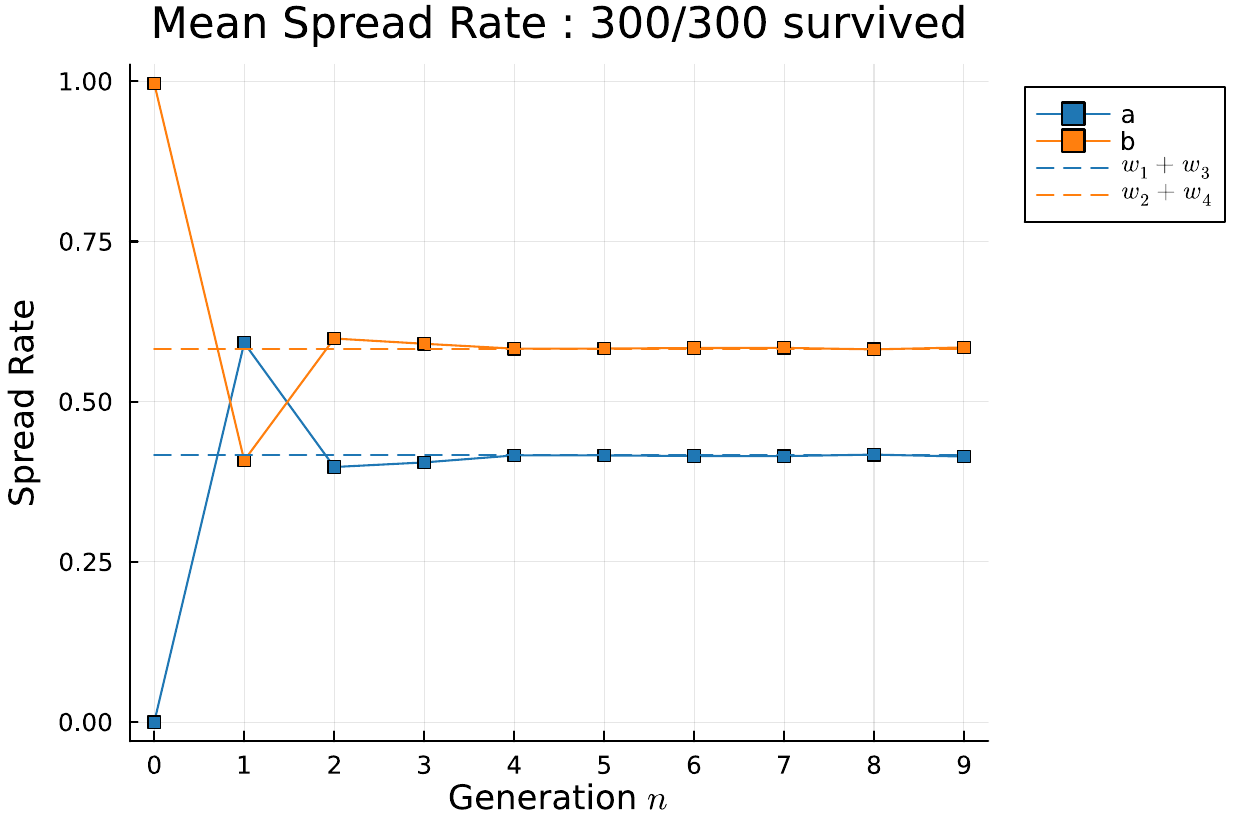} %插入图片，[]中设置图片大小，{}中是图片文件名
	\caption{Examples of a random model for 1-block code. The solid line represents experimental values, while the dashed line represents theoretical values. In this case, the spread rate is numerically approximated by averaging over 300 simulations.}%最终文档中希望显示的图片标题
\label{}
\end{figure}

\subsubsection{The $2$-block code case}

In this case, we will run simulations for the projected spread model $\{\mathbf{Z}^{[2],\Phi}_n\}_{n\geq 0}$ induced from a random $1$-spread model $\{\mathbf{Z}_n\}_{n\geq 0}$ with hidden type set $\mathcal{B}=\{b_1,b_2\}$ and the spread distribution $\{p^{(b_i)}(\cdot)\}_{i=1}^{2}$ such that
$$
\begin{array}{c}
p^{(b_1)}(1,1)=\frac{1}{3},\hspace{0.3cm} p^{(b_1)}(0,1)=\frac{2}{3},\hspace{0.3cm} p^{(b_2)}(1,0)=\frac{1}{2},\hspace{0.3cm} p^{(b_2)}(1,1)=\frac{1}{2},
\end{array}
$$
and the $2$-block code $\Phi^{[2]}: \mathcal{P}_2\rightarrow \mathcal{A}=\{a, b, c, d\}$ such that
$$
\begin{array}{c}
\Phi^{[2]}(\mathbf{b}_1)=\Phi^{[2]}(\mathbf{b}_2)=\Phi^{[2]}(\mathbf{b}_9)=\Phi^{[2]}(\mathbf{b}_{10})=a;\\
\Phi^{[2]}(\mathbf{b}_3)=\Phi^{[2]}(\mathbf{b}_4)=\Phi^{[2]}(\mathbf{b}_{11})=\Phi^{[2]}(\mathbf{b}_{12})=b;\\
\Phi^{[2]}(\mathbf{b}_5)=\Phi^{[2]}(\mathbf{b}_8)=c;\\
\Phi^{[2]}(\mathbf{b}_6)=\Phi^{[2]}(\mathbf{b}_7)=d,
\end{array}
$$
where all the potential $2$-patterns $\mathbf{b}_1, \mathbf{b}_2, \cdots, \mathbf{b}_{12}\in\mathbf{B}^{[2]}$ are listed in Figure \ref{Fig: 2b_patterns b_1} and Figure \ref{Fig: 2b_patterns b_2}. In addition, Figure \ref{Fig: 2b_induced pattern} illustrates how $\{\mathbf{Z}_n\}_{n\geq 0}$ induces the random $1$-spread model $\{\mathbf{Z}^{[2]}_n\}_{n\geq 0}$ with type set $\mathbf{B}^{[2]}=\{\mathbf{b}_1, \cdots, \mathbf{b}_{12}\}$.

\begin{figure}[H] %H为当前位置，!htb为忽略美学标准，htbp为浮动图形
	\centering %图片居中
	\includegraphics[width=\textwidth]{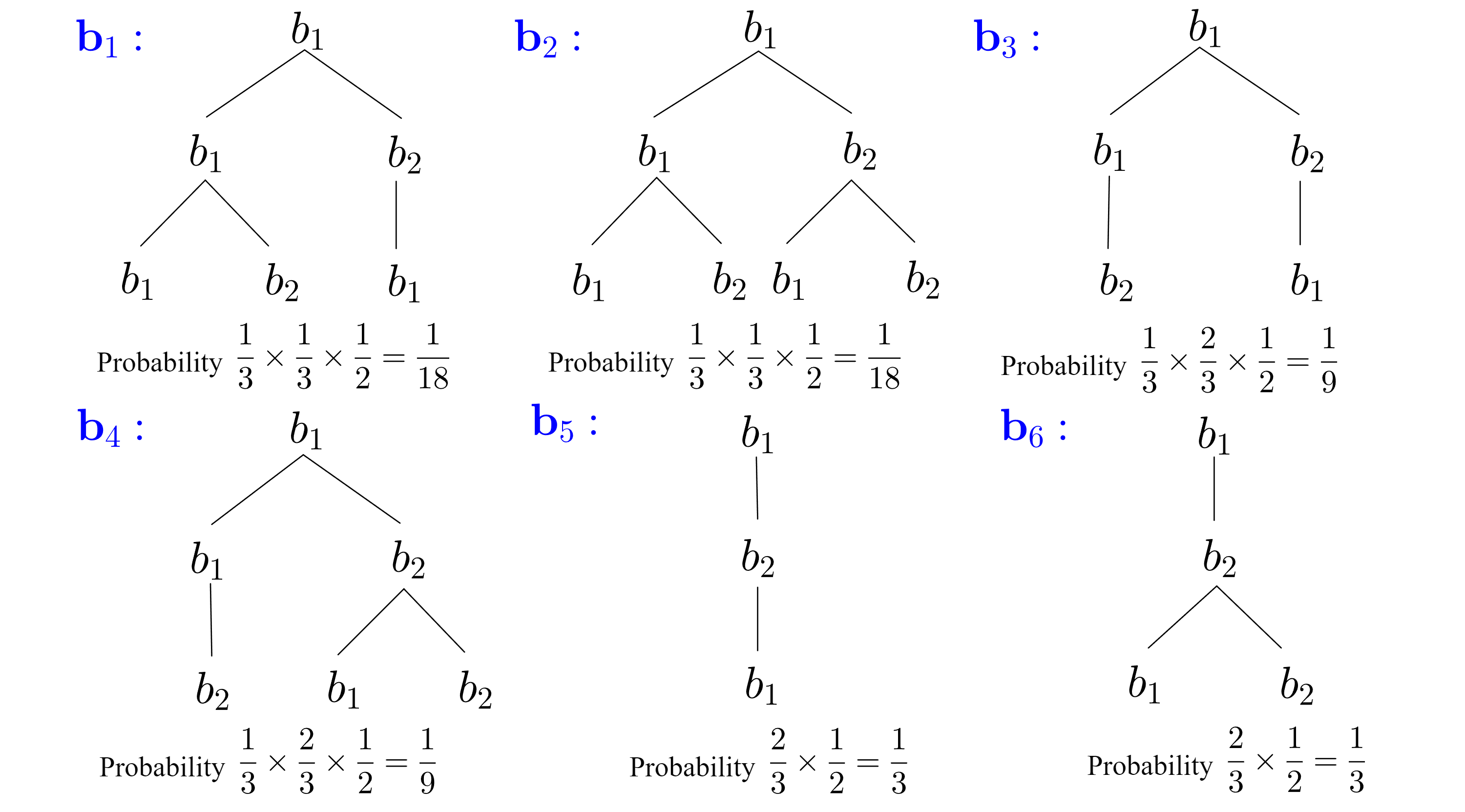} %插入图片，[]中设置图片大小，{}中是图片文件名
	\caption{Potential $2$-patterns initiated with $b_1$} %最终文档中希望显示的图片标题
\label{Fig: 2b_patterns b_1}
\end{figure}

\begin{figure}[H] %H为当前位置，!htb为忽略美学标准，htbp为浮动图形
	\centering %图片居中
	\includegraphics[width=\textwidth]{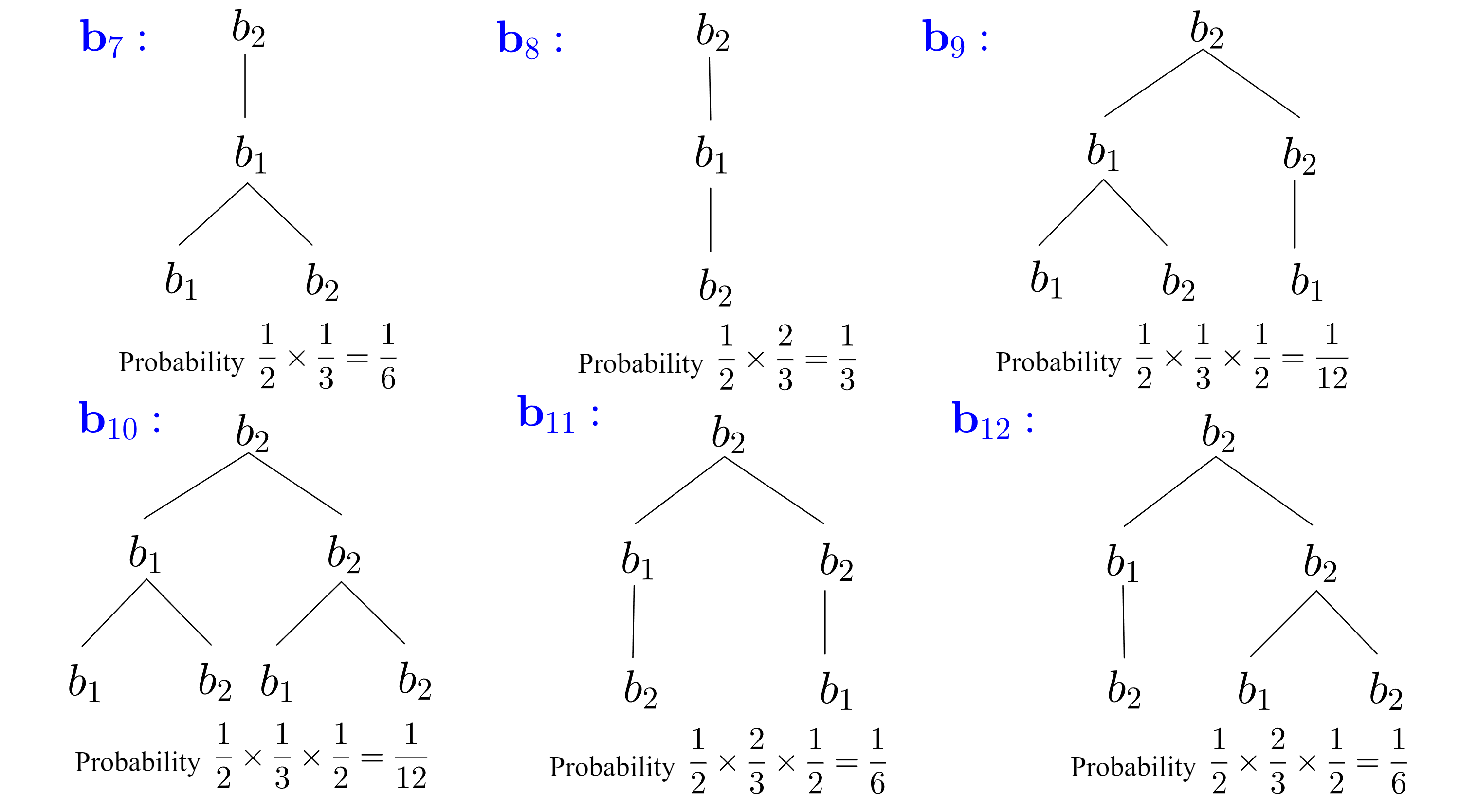} %插入图片，[]中设置图片大小，{}中是图片文件名
	\caption{Potential $2$-patterns initiated with $b_2$} %最终文档中希望显示的图片标题
\label{Fig: 2b_patterns b_2}
\end{figure}

\begin{figure}[H] %H为当前位置，!htb为忽略美学标准，htbp为浮动图形
	\centering %图片居中
	\includegraphics[width=\textwidth]{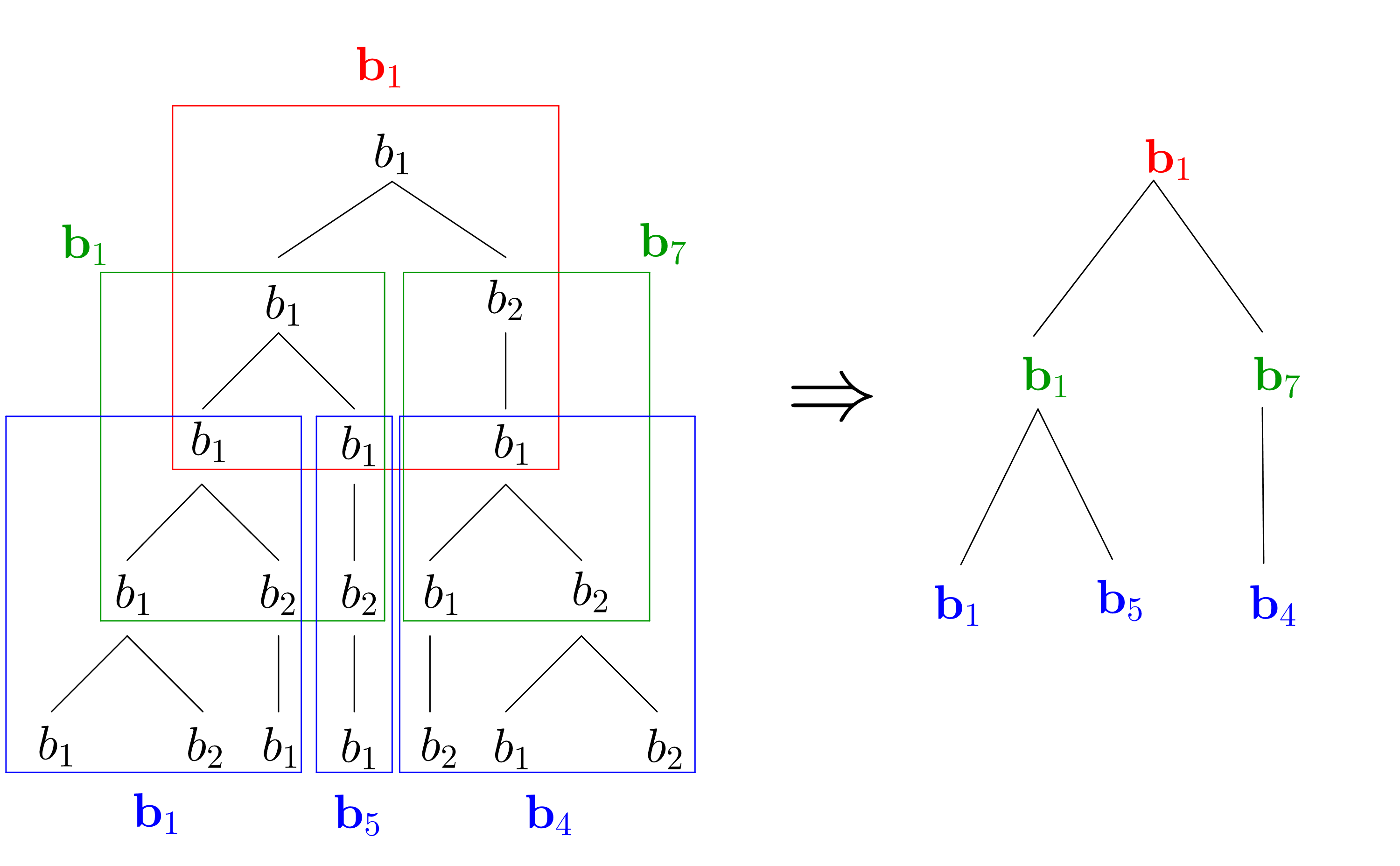} %插入图片，[]中设置图片大小，{}中是图片文件名
	\caption{A $2$-pattern in $\{\mathbf{Z}^{[2]}_n\}_{n\geq0}$ induced from a $4$-pattern in $\{\mathbf{Z}_n\}_{n\geq0}$ }%最终文档中希望显示的图片标题
\label{Fig: 2b_induced pattern}
\end{figure}

The spread distribution $\{p^{(\mathbf{b}_i)}(\cdot)\}_{i=1}^{12}$ of $\{\mathbf{Z}^{[2]}_n\}_{n\geq0}$ can be computed from the spread distribution $\{p^{(b_i)}(\cdot)\}_{i=1}^{2}$ of $\{\mathbf{Z}_n\}_{n\geq0}$ and are listed as follows:
$$
\begin{array}{rl}
&p^{(\mathbf{b}_1)}(1,0,0,0,0,0,1,0,0,0,0,0)=p^{(\mathbf{b}_1)}(0,1,0,0,0,0,1,0,0,0,0,0)=\frac{1}{18},\vspace{0.1cm}\\
&p^{(\mathbf{b}_1)}(0,0,1,0,0,0,1,0,0,0,0,0)=p^{(\mathbf{b}_1)}(0,0,0,1,0,0,1,0,0,0,0,0)\vspace{0.2cm}\\
=&p^{(\mathbf{b}_1)}(1,0,0,0,0,0,0,1,0,0,0,0)=p^{(\mathbf{b}_1)}(0,1,0,0,0,0,0,1,0,0,0,0)=\frac{1}{9},\vspace{0.2cm}\\
&p^{(\mathbf{b}_1)}(0,0,1,0,0,0,0,1,0,0,0,0)=p^{(\mathbf{b}_1)}(0,0,0,1,0,0,0,1,0,0,0,0)=\frac{2}{9};
\end{array}
$$

$$
\begin{array}{rl}
&p^{(\mathbf{b}_2)}(1,0,0,0,0,0,0,0,1,0,0,0)=p^{(\mathbf{b}_2)}(1,0,0,0,0,0,0,0,0,1,0,0)\vspace{0.1cm}\\
=&p^{(\mathbf{b}_2)}(0,1,0,0,0,0,0,0,1,0,0,0)=p^{(\mathbf{b}_2)}(0,1,0,0,0,0,0,0,0,1,0,0)=\frac{1}{36},\vspace{0.2cm}\\
&p^{(\mathbf{b}_2)}(0,0,1,0,0,0,0,0,1,0,0,0)=p^{(\mathbf{b}_2)}(0,0,1,0,0,0,0,0,0,1,0,0)\vspace{0.1cm}\\
=&p^{(\mathbf{b}_2)}(0,0,0,1,0,0,0,0,1,0,0,0)=p^{(\mathbf{b}_2)}(0,0,0,1,0,0,0,0,0,1,0,0)\vspace{0.1cm}\\
=&p^{(\mathbf{b}_2)}(1,0,0,0,0,0,0,0,0,0,1,0)=p^{(\mathbf{b}_2)}(1,0,0,0,0,0,0,0,0,0,0,1)\vspace{0.1cm}\\
=&p^{(\mathbf{b}_2)}(0,1,0,0,0,0,0,0,0,0,1,0)=p^{(\mathbf{b}_2)}(0,1,0,0,0,0,0,0,0,0,0,1)=\frac{1}{18},\vspace{0.2cm}\\
&p^{(\mathbf{b}_2)}(0,0,1,0,0,0,0,0,0,0,1,0)=p^{(\mathbf{b}_2)}(0,0,1,0,0,0,0,0,0,0,0,1)\vspace{0.1cm}\\
=&p^{(\mathbf{b}_2)}(0,0,0,1,0,0,0,0,0,0,1,0)=p^{(\mathbf{b}_2)}(0,0,0,1,0,0,0,0,0,0,0,1)=\frac{1}{9};\\
\end{array}
$$

$$
\begin{array}{c}
p^{(\mathbf{b}_3)}(0,0,0,0,1,0,1,0,0,0,0,0)=p^{(\mathbf{b}_3)}(0,0,0,0,0,1,1,0,0,0,0,0)=\frac{1}{6},\vspace{0.1cm}\\
p^{(\mathbf{b}_3)}(0,0,0,0,1,0,0,1,0,0,0,0)=p^{(\mathbf{b}_3)}(0,0,0,0,0,1,0,1,0,0,0,0)=\frac{1}{3};\\
\end{array}
$$

$$
\begin{array}{rl}
&p^{(\mathbf{b}_4)}(0,0,0,0,1,0,0,0,1,0,0,0)=p^{(\mathbf{b}_4)}(0,0,0,0,0,1,0,0,1,0,0,0)\vspace{0.1cm}\\
=&p^{(\mathbf{b}_4)}(0,0,0,0,1,0,0,0,0,1,0,0)=p^{(\mathbf{b}_4)}(0,0,0,0,0,1,0,0,0,1,0,0)=\frac{1}{12},\vspace{0.2cm}\\
&p^{(\mathbf{b}_4)}(0,0,0,0,1,0,0,0,0,0,1,0)=p^{(\mathbf{b}_4)}(0,0,0,0,0,1,0,0,0,0,1,0)\vspace{0.1cm}\\
=&p^{(\mathbf{b}_4)}(0,0,0,0,1,0,0,0,0,0,0,1)=p^{(\mathbf{b}_4)}(0,0,0,0,0,1,0,0,0,0,0,1)=\frac{1}{6};
\end{array}
$$

$$
\begin{array}{c}
p^{(\mathbf{b}_5)}(0,0,0,0,0,0,1,0,0,0,0,0)=\frac{1}{3},\vspace{0.1cm}\\
p^{(\mathbf{b}_5)}(0,0,0,0,0,0,0,1,0,0,0,0)=\frac{2}{3};\\
\end{array}
$$

$$
\begin{array}{c}
p^{(\mathbf{b}_6)}(0,0,0,0,0,0,0,0,1,0,0,0)=p^{(\mathbf{b}_6)}(0,0,0,0,0,0,0,0,0,1,0,0)=\frac{1}{6},\vspace{0.1cm}\\
p^{(\mathbf{b}_6)}(0,0,0,0,0,0,0,0,0,0,1,0)=p^{(\mathbf{b}_6)}(0,0,0,0,0,0,0,0,0,0,0,1)=\frac{1}{3};\\
\end{array}
$$

$$
\begin{array}{c}
p^{(\mathbf{b}_7)}(1,0,0,0,0,0,0,0,0,0,0,0)=p^{(\mathbf{b}_7)}(0,1,0,0,0,0,0,0,0,0,0,0)=\frac{1}{6},\vspace{0.1cm}\\
p^{(\mathbf{b}_7)}(0,0,1,0,0,0,0,0,0,0,0,0)=p^{(\mathbf{b}_7)}(0,0,0,1,0,0,0,0,0,0,0,0)=\frac{1}{3};\\
\end{array}
$$

$$
\begin{array}{c}
p^{(\mathbf{b}_8)}(0,0,0,0,1,0,0,0,0,0,0,0)=\frac{1}{2},\vspace{0.1cm}\\
p^{(\mathbf{b}_8)}(0,0,0,0,0,1,0,0,0,0,0,0)=\frac{1}{2};\\
\end{array}
$$

$$
\begin{array}{rl}
&p^{(\mathbf{b}_9)}(1,0,0,0,0,0,1,0,0,0,0,0)=p^{(\mathbf{b}_9)}(0,1,0,0,0,0,1,0,0,0,0,0)=\frac{1}{18},\vspace{0.1cm}\\
&p^{(\mathbf{b}_9)}(0,0,1,0,0,0,1,0,0,0,0,0)=p^{(\mathbf{b}_9)}(0,0,0,1,0,0,1,0,0,0,0,0)\vspace{0.2cm}\\
=&p^{(\mathbf{b}_9)}(1,0,0,0,0,0,0,1,0,0,0,0)=p^{(\mathbf{b}_9)}(0,1,0,0,0,0,0,1,0,0,0,0)=\frac{1}{9},\vspace{0.2cm}\\
&p^{(\mathbf{b}_9)}(0,0,1,0,0,0,0,1,0,0,0,0)=p^{(\mathbf{b}_9)}(0,0,0,1,0,0,0,1,0,0,0,0)=\frac{2}{9};
\end{array}
$$

$$
\begin{array}{rl}
&p^{(\mathbf{b}_{10})}(1,0,0,0,0,0,0,0,1,0,0,0)=p^{(\mathbf{b}_{10})}(1,0,0,0,0,0,0,0,0,1,0,0)\vspace{0.1cm}\\
=&p^{(\mathbf{b}_{10})}(0,1,0,0,0,0,0,0,1,0,0,0)=p^{(\mathbf{b}_{10})}(0,1,0,0,0,0,0,0,0,1,0,0)=\frac{1}{36},\vspace{0.2cm}\\
&p^{(\mathbf{b}_{10})}(0,0,1,0,0,0,0,0,1,0,0,0)=p^{(\mathbf{b}_{10})}(0,0,1,0,0,0,0,0,0,1,0,0)\vspace{0.1cm}\\
=&p^{(\mathbf{b}_{10})}(0,0,0,1,0,0,0,0,1,0,0,0)=p^{(\mathbf{b}_{10})}(0,0,0,1,0,0,0,0,0,1,0,0)\vspace{0.1cm}\\
=&p^{(\mathbf{b}_{10})}(1,0,0,0,0,0,0,0,0,0,1,0)=p^{(\mathbf{b}_{10})}(1,0,0,0,0,0,0,0,0,0,0,1)\vspace{0.1cm}\\
=&p^{(\mathbf{b}_{10})}(0,1,0,0,0,0,0,0,0,0,1,0)=p^{(\mathbf{b}_{10})}(0,1,0,0,0,0,0,0,0,0,0,1)=\frac{1}{18},\vspace{0.2cm}\\
&p^{(\mathbf{b}_{10})}(0,0,1,0,0,0,0,0,0,0,1,0)=p^{(\mathbf{b}_{10})}(0,0,1,0,0,0,0,0,0,0,0,1)\vspace{0.1cm}\\
=&p^{(\mathbf{b}_{10})}(0,0,0,1,0,0,0,0,0,0,1,0)=p^{(\mathbf{b}_{10})}(0,0,0,1,0,0,0,0,0,0,0,1)=\frac{1}{9};\\
\end{array}
$$

$$
\begin{array}{c}
p^{(\mathbf{b}_{11})}(0,0,0,0,1,0,1,0,0,0,0,0)=p^{(\mathbf{b}_{11})}(0,0,0,0,0,1,1,0,0,0,0,0)=\frac{1}{6},\vspace{0.1cm}\\
p^{(\mathbf{b}_{11})}(0,0,0,0,1,0,0,1,0,0,0,0)=p^{(\mathbf{b}_{11})}(0,0,0,0,0,1,0,1,0,0,0,0)=\frac{1}{3}\\
\end{array}
$$
and
$$
\begin{array}{rl}
&p^{(\mathbf{b}_{12})}(0,0,0,0,1,0,0,0,1,0,0,0)=p^{(\mathbf{b}_{12})}(0,0,0,0,0,1,0,0,1,0,0,0)\vspace{0.1cm}\\
=&p^{(\mathbf{b}_{12})}(0,0,0,0,1,0,0,0,0,1,0,0)=p^{(\mathbf{b}_{12})}(0,0,0,0,0,1,0,0,0,1,0,0)=\frac{1}{12},\vspace{0.2cm}\\
&p^{(\mathbf{b}_{12})}(0,0,0,0,1,0,0,0,0,0,1,0)=p^{(\mathbf{b}_{12})}(0,0,0,0,0,1,0,0,0,0,1,0)\vspace{0.1cm}\\
=&p^{(\mathbf{b}_{12})}(0,0,0,0,1,0,0,0,0,0,0,1)=p^{(\mathbf{b}_{12})}(0,0,0,0,0,1,0,0,0,0,0,1)=\frac{1}{6}.
\end{array}
$$
Therefore, the corresponding spread mean matrix is given as
$$
\mathbf{M}^{[3]}=\left(\begin{array}{cccccccccccc}
\frac{1}{6}&\frac{1}{6}&\frac{1}{3}&\frac{1}{3}&0&0&\frac{1}{3}&\frac{2}{3}&0&0&0&0\vspace{0.1cm}\\
\frac{1}{6}&\frac{1}{6}&\frac{1}{3}&\frac{1}{3}&0&0&0&0&\frac{1}{6}&\frac{1}{6}&\frac{1}{3}&\frac{1}{3}\vspace{0.1cm}\\
0&0&0&0&\frac{1}{2}&\frac{1}{2}&\frac{1}{3}&\frac{2}{3}&0&0&0&0\vspace{0.1cm}\\
0&0&0&0&\frac{1}{2}&\frac{1}{2}&0&0&\frac{1}{6}&\frac{1}{6}&\frac{1}{3}&\frac{1}{3}\vspace{0.1cm}\\
0&0&0&0&0&0&\frac{1}{3}&\frac{2}{3}&0&0&0&0\vspace{0.1cm}\\
0&0&0&0&0&0&0&0&\frac{1}{6}&\frac{1}{6}&\frac{1}{3}&\frac{1}{3}\vspace{0.1cm}\\
\frac{1}{6}&\frac{1}{6}&\frac{1}{3}&\frac{1}{3}&0&0&0&0&0&0&0&0\vspace{0.1cm}\\
0&0&0&0&\frac{1}{2}&\frac{1}{2}&0&0&0&0&0&0\vspace{0.1cm}\\
\frac{1}{6}&\frac{1}{6}&\frac{1}{3}&\frac{1}{3}&0&0&\frac{1}{3}&\frac{2}{3}&0&0&0&0\vspace{0.1cm}\\
\frac{1}{6}&\frac{1}{6}&\frac{1}{3}&\frac{1}{3}&0&0&0&0&\frac{1}{6}&\frac{1}{6}&\frac{1}{3}&\frac{1}{3}\vspace{0.1cm}\\
0&0&0&0&\frac{1}{2}&\frac{1}{2}&\frac{1}{3}&\frac{2}{3}&0&0&0&0\vspace{0.1cm}\\
0&0&0&0&\frac{1}{2}&\frac{1}{2}&0&0&\frac{1}{6}&\frac{1}{6}&\frac{1}{3}&\frac{1}{3}
\end{array}
\right)$$
and its maximal eigenvalue is $\rho\approx 1.4201325$ and the associated normalized left eigenvector is 
$$
\begin{array}{c}
\mathbf{w}\approx (0.02662,0.02662,0.05324,0.05324,0.159734,0.159734,0.086799,\\0.173599,0.043399,0.043399,0.086799,0.086799).
\end{array}
$$
Figure \ref{Fig: 2-b code} does show the fact that the spread rates of explicit types converge to the sums of the corresponding components of the eigenvector $\mathbf{w}$.

\begin{figure}[H] %H为当前位置，!htb为忽略美学标准，htbp为浮动图形
	\centering %图片居中
	\includegraphics[width=\textwidth]{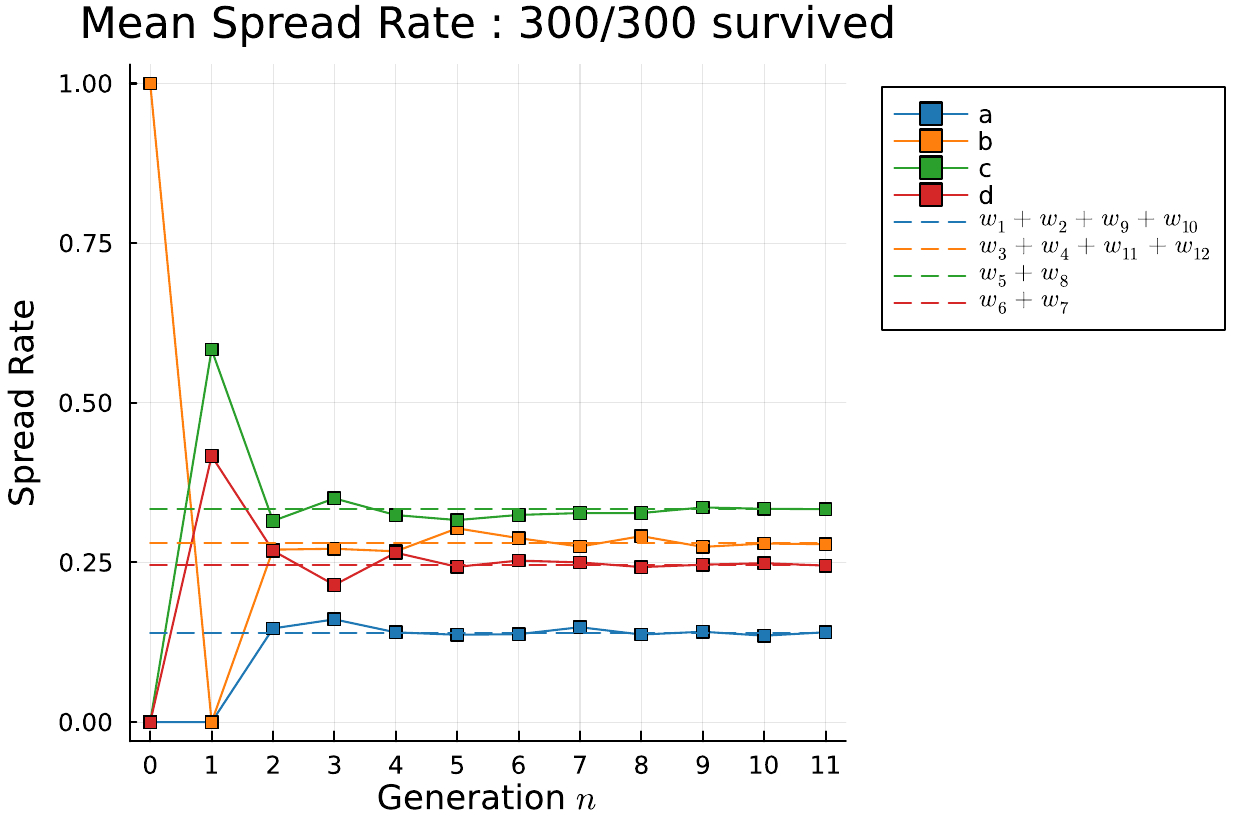} %插入图片，[]中设置图片大小，{}中是图片文件名
	\caption{Examples of a random model for 2-block code. The solid line represents experimental values, while the dashed line represents theoretical values. In this case, the spread rate is numerically approximated by averaging over 300 simulations.}%最终文档中希望显示的图片标题
\label{Fig: 2-b code}
\end{figure}

\section{Conclusions}
When a pandemic occurs, it has a significant impact not only on public health but also on wider society, the economy and so on. Therefore, predicting the development and transmission rate of infectious diseases is crucial for prevention and control. Epidemiological investigations and patient classification are often the first steps in this series of tasks. The effectiveness of patient classification often relies on the accuracy of testing reagents and instruments. Sometimes, due to the urgency and limitations of technological development, test results may generate gray areas, where patients with different attributes may be classified into the same category for certain reasons. In order to describe this phenomenon and provide a method for predicting transmission rates in such situations, we introduce a map and propose the topological and random projected spread models to describe the differences in classification before and after testing.

Given a topological or random spread model that represents the current transmission situation of the disease and a map that projects patterns with hidden types to explicit types, we construct the associated topological or random projected spread model to predict the spread rates associated with explicit types. The significance of this work is that these projected spread models have a wide range of applications. In particular, they can take into consideration the possibility that the contagious behavior of a patient of a certain hidden type may vary during the incubation period. In this case, we may first consider the patient as an individual of some ``subtype" (referred to as a hidden type) according to the stage of the incubation period the patient belongs to and then project these ``subtypes" back to the original type (referred to as an explicit type) using a $0$-block code, so that we can determine the spread rates of the original types in this way. We also generalize this idea of projected spread models to the $m$-spread models together with $k$-block codes for any positive integer $m$ and nonnegative integer $k$, and the spread rates are found in all cases where $m-1=k$, $m-1<k$ and $m-1>k$. In addition, we conduct some simulations in which the results provide numerical evidence supporting our theorems, both in the topological and random cases.

\bibliographystyle{amsplain}
\bibliography{main}

\end{document}